	\newtheorem{thm}{Theorem}[section]
	\newtheorem{lem}[thm]{Lemma}
	\newtheorem{prop}[thm]{Proposition}
	\newtheorem{cor}[thm]{Corollary}
	\newtheorem{conj}[thm]{Conjecture}
	\newtheorem{dfn}[thm]{Definition}
	\newcommand\MOD{\textrm{ (mod }}
	\newcommand\Var{\mathrm{Var}}
	\newcommand\Covar{\mathrm{Covar}}
	\newcommand\Tr{\mathrm{Tr}}
	\newcommand{\vgeq}{\rotatebox[origin=c]{-90}{$\geq$}}
\begin{document}

\title{Arithmetic functions in short intervals and the symmetric group}
\author{Brad Rodgers}
\address{Department of Mathematics, University of Michigan \\  530 Church St., Ann Arbor, MI 48109}
\email{rbrad@umich.edu}
\date{}

\begin{abstract}
	We consider the variance of sums of arithmetic functions over random short intervals in the function field setting. Based on the analogy between factorizations of random elements of $\mathbb{F}_q[T]$ into primes and the factorizations of random permutations into cycles, we give a simple but general formula for these variances in the large $q$ limit for arithmetic functions that depend only upon factorization structure. From this we derive new estimates, quickly recover some that are already known, and make new conjectures in the setting of the integers.

	In particular we make the combinatorial observation that any function of this sort can be decomposed into a sum of functions $u$ and $v$, depending on the size of the short interval, with $u$ making a negligible contribution to the variance, and $v$ asymptotically contributing diagonal terms only. 
	
	This variance evaluation is closely related to the appearance of random matrix statistics in the zeros of families of L-functions and sheds light on the arithmetic meaning of this phenomenon.
\end{abstract}

\subjclass[2010]{11M50, 11N37, 11T55, 05E05, 05E10}
\keywords{Arithmetic in function fields, random matrices, the symmetric group}

\maketitle

\section{Historical Background and Motivation}
\label{sec:1}

The purpose of this paper is to explore a connection between two well-known phenomena in number theory: that the zeros of a family of L-functions distribute like the eigenvalues of a random matrix and that the prime factors of a random integer distribute like the cycles of a random permutation. We use this connection to give a general yet simple description for the statistical behavior of sums of arithmetic functions over short intervals. The results that we ultimately prove will make use of a function field analogy: they concern arithmetic functions defined on $\mathbb{F}_q[T]$ rather than the integers and we will require that $q\rightarrow\infty$. We begin in this section however with a discussion of some historical conjectures and heuristics from the integers that motivate what follows. A statement of the most important results we prove may be found at the beginning of section \ref{sec:3} -- our main results are Theorems \ref{thm:general_var} and \ref{thm:subspace_decompose1} along with Corollary \ref{cor:variance_to_minimizer}. Key use is made of a combinatorial variant of the explicit formula of Weil, Theorem \ref{thm:schur_in_zeros}, which may be of independent interest.

We recall the following conjectures:

\begin{conj} [Good-Churchhouse \cite{GoCh}]
	\label{conj:GoCh}
	As $X\rightarrow\infty$, for $H = X^\delta$ with $\delta \in (0,1)$,
	$$
	\frac{1}{X} \int_X^{2X} \bigg( \sum_{x \leq n \leq n+H} \mu(n)\bigg)^2 \,dx \sim \frac{6}{\pi^2} H.
	$$
\end{conj}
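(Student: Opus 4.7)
The natural first move is to expand the square in the integrand and integrate in $x$ term by term. Writing
$$\bigg(\sum_{x\le n\le x+H}\mu(n)\bigg)^{\!2}=\sum_{n,m}\mu(n)\mu(m)\,\mathbf{1}_{[x,x+H]}(n)\mathbf{1}_{[x,x+H]}(m),$$
the $x$-integral over $[X,2X]$ contributes a factor $H-|n-m|$ whenever $(n,m)\in[X,2X]^2$ with $|n-m|\le H$, up to boundary effects that are supported on an $O(H)\times O(H)$ window near the endpoints and hence contribute at most $O(H^2)$ in total. Setting $h=m-n$ this yields
$$\int_X^{2X}\!\bigg(\sum_{x\le n\le x+H}\mu(n)\bigg)^{\!2}dx=\sum_{|h|<H}(H-|h|)\sum_{X\le n\le 2X}\mu(n)\mu(n+h)+O(H^2).$$

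The diagonal term $h=0$ equals $H\sum_{X\le n\le 2X}\mu(n)^2$, which by the classical asymptotic $\sum_{n\le Y}\mu^2(n)=\frac{6}{\pi^2}Y+O(\sqrt Y)$ is exactly $\frac{6}{\pi^2}HX+O(H\sqrt X)$. After division by $X$ this alone reproduces the conjectured asymptotic $\frac{6}{\pi^2}H$, and the boundary error $O(H^2/X)=o(H)$ is harmless provided $\delta<1$. Thus the entire content of the conjecture is the assertion that the off-diagonal
$$\mathcal{O}(X,H):=\sum_{1\le|h|<H}(H-|h|)\sum_{X\le n\le 2X}\mu(n)\mu(n+h)$$
is $o(HX)$ uniformly in the range $H=X^\delta$, $\delta\in(0,1)$.

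This off-diagonal bound is where the real difficulty lies. For each fixed nonzero $h$ the inner sum $\sum_{n\le X}\mu(n)\mu(n+h)=o(X)$ is an instance of the binary Chowla conjecture and is currently open. To attack $\mathcal{O}(X,H)$ directly I would try a circle-method approach: by Parseval the variance is essentially $\int|\hat M(\alpha)|^2 K_H(\alpha)\,d\alpha$ where $\hat M(\alpha)=\sum_{X\le n\le 2X}\mu(n)e(n\alpha)$ and $K_H$ is a Fej\'er kernel of width $\sim 1/H$, so it suffices to show that $\hat M$ exhibits power-saving cancellation on \emph{every} short arc of length $1/H$ around rationals of bounded denominator. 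A Heath--Brown identity decomposes $\mu$ into type~I and type~II Dirichlet convolutions; the type~I pieces are handled by standard exponential-sum bounds, and one would hope to feed the type~II pieces into the Matom\"aki--Radziwi\l\l\ machinery for multiplicative functions in short intervals.

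The hard part, and the reason this remains a conjecture, is obtaining enough uniformity in $h$: one needs cancellation of $\sum\mu(n)\mu(n+h)$ \emph{on average} over a long range of shifts, which is what the circle method ultimately demands and what current technology cannot deliver for the full range $\delta\in(0,1)$. This uniformity obstruction is precisely what the paper sidesteps by passing to the function field analog $\mathbb{F}_q[T]$ in the limit $q\to\infty$, where the random-permutation model for prime factorizations becomes a theorem and the off-diagonal can be computed exactly via class functions on the symmetric group rather than controlled by analytic cancellation.
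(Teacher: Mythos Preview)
The statement you were asked to address is labelled \texttt{conj} in the paper, not \texttt{thm}: it is the Good--Churchhouse conjecture, stated purely as historical motivation, and the paper makes no attempt to prove it over the integers. There is therefore no ``paper's own proof'' to compare against. What the paper \emph{does} contain is exactly the heuristic you wrote down: in the paragraph following Theorem~\ref{thm:f_dvar} the author observes that expanding the variance into a double sum and discarding off-diagonal terms reproduces the predicted answer, citing Ng for this heuristic in the integer setting. Your reduction to the diagonal contribution $H\sum_{X\le n\le 2X}\mu(n)^2\sim \tfrac{6}{\pi^2}HX$ and your identification of the off-diagonal $\sum_{h\ne 0}(H-|h|)\sum_n\mu(n)\mu(n+h)$ with a Chowla-type correlation are both correct and match the paper's discussion precisely.

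Your assessment of the obstruction is also accurate: the required bound $\mathcal{O}(X,H)=o(HX)$ is genuinely open for every $\delta\in(0,1)$, and neither the circle method nor Matom\"aki--Radziwi\l\l\ currently reaches it. You are right that the paper sidesteps this by working in $\mathbb{F}_q[T]$ with $q\to\infty$, where the analogue (Theorem~\ref{thm:a_mobiusvar}) is a theorem via Katz's equidistribution result rather than via direct control of shifted correlations. So there is no gap in your proposal beyond the one you already named; you have correctly diagnosed why this remains a conjecture rather than offered a proof of it.
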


\begin{conj}[Goldston-Montgomery \cite{GoMo}]
	\label{conj:GoMo}
	As $X\rightarrow\infty$ for $H = X^\delta$ with $\delta \in (0,1)$,
	$$
	\frac{1}{X}\int_X^{2X} \bigg( \sum_{x \leq n \leq x +H} \Lambda(n) - H\bigg)^2\, dx \sim H (\log X  -\log H).
	$$
\end{conj}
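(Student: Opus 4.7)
The plan is to establish this conjecture conditionally on the Riemann Hypothesis and Montgomery's pair correlation conjecture, reproducing the reduction of Goldston and Montgomery. First I would use the explicit formula under RH to write
$$\psi(x+H) - \psi(x) - H \;=\; -\sum_\rho \frac{(x+H)^\rho - x^\rho}{\rho} + O(\log^2 X),$$
where $\rho = \tfrac{1}{2} + i\gamma$ runs over nontrivial zeros of $\zeta(s)$. After replacing $\sum_{x\leq n\leq x+H}\Lambda(n) - H$ by $\psi(x+H) - \psi(x) - H$ (the prime-power contribution from $p^k$, $k\geq 2$, being negligible), squaring and integrating in $x\in[X,2X]$ reduces matters to a double sum over pairs of zeros, in which the oscillatory integral $\int_X^{2X} x^{i(\gamma-\gamma')}\,dx$ effectively localizes to $|\gamma-\gamma'|\lesssim 1/\log X$ and only zeros with $|\gamma|\lesssim X/H = X^{1-\delta}$ contribute.

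Next I would rewrite the resulting sum in terms of Montgomery's smoothed pair correlation
$$F(\alpha, T) \;=\; \sum_{0 < \gamma,\gamma' \leq T} T^{\,i\alpha(\gamma-\gamma')}\,\frac{4}{4+(\gamma-\gamma')^2},$$
and invoke the conjectural asymptotic $F(\alpha,T)\sim (T/2\pi)\log T$ uniformly for $1\leq \alpha \leq A$ for each fixed $A$, together with Montgomery's unconditional evaluation in the range $|\alpha|\leq 1$. A Plancherel/Fourier inversion identity then transfers this pair correlation asymptotic back into the short-interval variance and produces the main term $H(\log X - \log H)$, once the needed range of $\alpha$ (roughly $1\leq \alpha \leq 1/\delta$, coming from $H = X^\delta$) is covered.

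The main obstacle, of course, is Montgomery's pair correlation conjecture itself, which is open for every $\alpha>1$; the Goldston--Montgomery argument actually shows the two statements are \emph{equivalent} on RH, so no detour around pair correlation is available from this angle. A fully unconditional proof would further need to dispense with the Riemann Hypothesis in the use of the explicit formula, an even more remote prospect. The point of the present paper, as I read the introduction, is that the function field analogues of Conjectures \ref{conj:GoCh} and \ref{conj:GoMo} in the large-$q$ limit can be attacked using Katz--Sarnak-type equidistribution theorems, which play the role of pair correlation on the zeta side and turn heuristics of the above form into rigorous theorems.
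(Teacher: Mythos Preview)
This statement is a \emph{conjecture} in the paper, not a theorem; the paper offers no proof and none is expected. Conjecture~\ref{conj:GoMo} is quoted purely as motivation for the function field results (Theorems~\ref{thm:a_mobiusvar}--\ref{thm:f_dvar}), and the paper makes no attempt to prove it over the integers. Your proposal correctly recognizes this: you outline the Goldston--Montgomery conditional equivalence with Montgomery's pair correlation conjecture, and you explicitly acknowledge that both pair correlation for $\alpha>1$ and RH are genuine obstructions, so that no unconditional proof is available by this route. That reading is accurate, and your closing paragraph about the role of Katz's equidistribution theorem in the function field setting is exactly the point the paper is making.

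In short, there is no ``paper's own proof'' to compare against; your write-up is a fair summary of why the statement remains conjectural and of the conditional argument in \cite{GoMo}.
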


In both conjectures, we consider random $x \in [X,2X]$ and seek to compute the variance of the sum of an arithmetic function, $\mu(n)$ or $\Lambda(n)$, over the random short interval $[x,x+H]$. Here $\mu(n)$ is the M\"obius functions, which oscillates around the value $0$, and $\Lambda(n)$ is the von Mangoldt function which has an average value of $1$, by the prime number theorem. Similar conjectures can be made for, for instance, the higher order von Mangoldt functions $\Lambda_j(n)$ \cite{Ro} or the $k$-fold divisor function $d_k(n)$ \cite{KeRoRoRu}, the latter of which is conjectured to display a very curious series of `phase changes' as the parameter $\delta$ varies. These conjectures are known to be closely related to the conjectural phenomenon that the zeros of families of $L$-functions tends to distribute like the eigenvalues of certain random matrices (see \cite{KaSa} for an exposition on the latter phenomenon).

In the past few years, beginning with the work of Keating and Rudnick \cite{KeRu1}, function field variants of these conjectures have been proved. (In some cases the function field theorems have in fact motivated new conjectures.) In order to state these function field results, we make use of a well-known dictionary between the integers $\mathbb{Z}$ and the ring of polynomials over a finite field, that is $\mathbb{F}_q[T]$. To review this dictionary and fix some of our notation:
\begin{itemize}
	\item The collection of monic polynomials, $\mathcal{M}$, takes the place of positive integers,
	\item The degree, $\deg(f)$, of $f \in \mathcal{M}$ takes the place of $\log n$ for $n \in \mathbb{N}$.
	\item The collection of degree $n$ monic polynomials, $\mathcal{M}_n$, takes the place of integers lying in a dyadic interval $[X,2X]$.
	\item Irreducible polynomials take the role of primes.
	\item For $f \in \mathcal{M}$ and $h < \deg(f)$, the set $I(f;h):=\{g \in \mathcal{M}\,:\, \deg(f-g) \leq h\}$ is a short interval around the polynomial $f$, playing the role of $[x,x+H]$. (Here $h$ may be thought of as corresponding to $\log H$.)
\end{itemize}

\noindent Note that $|\mathcal{M}_n| = q^n$, while $|I(f;h)| = q^{h+1}.$

This set-up is explained more extensively in, for instance, the ICM address of Rudnick \cite{Ru} or the book of Rosen \cite{Ro}. We have the following analogues of Conjecture \ref{conj:GoCh} and \ref{conj:GoMo}:

\begin{thm}[Keating-Rudnick \cite{Ru}, Bae-Cha-Jung \cite{BaChJu}]
	\label{thm:a_mobiusvar}
	For fixed $0 \leq h \leq n-5$, as $q\rightarrow\infty$,
	\begin{equation}
	\label{a_mobiusvar}
	\frac{1}{q^n} \sum_{f \in \mathcal{M}_n} \bigg| \sum_{g \in I(f;h)} \mu(g) \bigg|^2 \sim q^{h+1}.
	\end{equation}
\end{thm}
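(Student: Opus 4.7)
The plan is to follow the Keating--Rudnick strategy, which reduces such short-interval variances to an integral over a classical compact group via character sums and Katz's equidistribution theorem.

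First, observe that $I(f;h)$ is a coset of the additive subgroup of polynomials of degree $\leq h$. The appropriate harmonic analysis is carried out using ``short interval characters'', namely the even Dirichlet characters modulo $T^{n-h}$ (characters of $G^\star := (\mathbb{F}_q[T]/T^{n-h})^\times/\mathbb{F}_q^\times$, of order $q^{n-h-1}$). Expanding the indicator of $I(f;h)$ in this basis and applying Parseval, I would rewrite
$$
\frac{1}{q^n}\sum_{f\in\mathcal{M}_n}\bigg|\sum_{g\in I(f;h)}\mu(g)\bigg|^2 = q^{h+1}\cdot\frac{1}{|G^\star|}\sum_{\chi\in G^\star,\,\chi\neq \chi_0}|\widetilde M_n(\chi)|^2 + o(q^{h+1}),
$$
where $\widetilde M_n(\chi):= q^{-n/2}\sum_{f\in\mathcal{M}_n}\mu(f)\chi(f)$, and the error absorbs the contribution of imprimitive characters (which is of strictly smaller order thanks to $h\leq n-5$).

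Next, I would interpret each $\widetilde M_n(\chi)$ spectrally. For $\chi$ primitive, the $L$-function $L(u,\chi):=\sum_f \chi(f) u^{\deg f}$ is a polynomial in $u$ of degree $N=n-h-2$, and by Weil's Riemann Hypothesis for curves it factors as $L(u,\chi) = \det(I - u\sqrt{q}\,\Theta_\chi)$ for a unitary matrix $\Theta_\chi\in U(N)$. The generating identity $\sum_n\sum_{f\in\mathcal{M}_n}\mu(f)\chi(f)\,u^n = L(u,\chi)^{-1}$ then expresses $\widetilde M_n(\chi)$ as the coefficient of $u^n$ in $\det(I-u\Theta_\chi)^{-1}$; concretely, $\widetilde M_n(\chi) = h_n(\Theta_\chi) = s_{(n)}(\Theta_\chi)$, the one-row Schur function evaluated on the eigenvalues of $\Theta_\chi$. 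Now invoke Katz's equidistribution theorem: as $q\to\infty$ with $n-h$ fixed, the unitarized Frobenius classes $\{\Theta_\chi\}$ (over primitive even characters mod $T^{n-h}$) equidistribute in $PU(N)$ with respect to Haar measure. This converts the character average into a matrix integral,
$$
\frac{1}{|G^\star|}\sum_{\chi \text{ prim.}}|s_{(n)}(\Theta_\chi)|^2 \;\longrightarrow\; \int_{U(N)}|s_{(n)}(U)|^2\,dU = 1,
$$
the last equality being the Schur orthogonality relation on $U(N)$ (valid for all $N\geq 1$, so in particular in the range $h\leq n-5$, where $N\geq 3$). Combined with the $q^{h+1}$ prefactor this gives the claimed $\sim q^{h+1}$ asymptotic.

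The principal difficulty is the equidistribution step, which rests on Katz's deep theorem identifying the geometric monodromy group of the family of Dirichlet $L$-functions attached to primitive even characters mod $T^{n-h}$ as the full unitary group; verifying this is the technical heart of the argument. The character-theoretic bookkeeping (even/odd characters, the degree count $N = n-h-2$, the treatment of imprimitive $\chi$, which is precisely where the condition $h\leq n-5$ enters), the reduction of $\widetilde M_n(\chi)$ to a Schur function via $L(u,\chi)^{-1}$, and the final Schur-orthogonality evaluation are comparatively routine.
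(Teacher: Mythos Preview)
Your proposal is correct and follows precisely the Keating--Rudnick strategy to which the paper attributes this theorem. The paper's own rederivation routes through its general variance formula (Theorem~\ref{thm:general_var}) together with the one-line identity $\mu(f)=(-1)^n X^{(1^n)}(f)$ (Proposition~\ref{prop:mobius_exp}); but for the M\"obius function specifically the two arguments collapse to the same computation. Your generating-function identity $\sum_f \mu(f)\chi(f)u^{\deg f}=L(u,\chi)^{-1}$ yields $\widetilde M_n(\chi)=s_{(n)}(\Theta_\chi)+O(q^{-1/2})$, which is exactly the $\lambda'=(n)$ case of the paper's Theorem~\ref{thm:schur_in_zeros}; Katz's equidistribution and the Schur orthogonality $\int_{U(N)}|s_{(n)}|^2\,dg=1$ then finish the job in both. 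One small bookkeeping point: for \emph{even} primitive $\chi$ modulo $T^{n-h}$ one has $\mathcal L(u,\chi)=(1-u)\det(I-u\sqrt{q}\,\Theta_\chi)$ with $\Theta_\chi\in U(n-h-2)$, so the trivial zero at $u=1$ contributes lower-order terms to the $u^n$ coefficient of $L^{-1}$; this is harmless (it is absorbed into the $O(q^{-1/2})$ error you already allow), but your statement ``$L(u,\chi)=\det(I-u\sqrt{q}\,\Theta_\chi)$'' is literally correct only for odd $\chi$.
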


\begin{thm}[Keating-Rudnick \cite{KeRu1}]
	\label{thm:b_mangoldtvar}
	For fixed $0 \leq h \leq n-5$ as $q\rightarrow\infty$,
	\begin{equation}
	\label{b_mangoldtvar}
	\frac{1}{q^n} \sum_{f \in \mathcal{M}_n} \bigg| \sum_{g \in I(f;h)} \Lambda(g) - q^{h+1}\bigg|^2 \sim q^{h+1}(n-h-2).
	\end{equation}
\end{thm}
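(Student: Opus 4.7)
My plan is to follow the standard Keating--Rudnick strategy: convert the variance into a second moment over a family of Dirichlet L-functions via orthogonality, use the explicit formula to turn the relevant character sums into traces of unitarized Frobenius classes, and invoke Katz's equidistribution theorem to reduce the calculation to a matrix integral on the unitary group.

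First, for $f \in \mathcal{M}_n$ the short interval $I(f;h) \cap \mathcal{M}_n$ is a coset of the subspace of polynomials of degree $\leq h$ inside the $(n-h-1)$-dimensional affine space of non-leading coefficients. These cosets can be detected by the family $\mathcal{F}$ of \emph{even} Dirichlet characters modulo $T^{n-h}$ (those trivial on $\mathbb{F}_q^*$), which has cardinality $q^{n-h-1}$. Parseval/orthogonality rewrites the variance as
\[
\frac{1}{|\mathcal{F}|}\sum_{\substack{\chi \in \mathcal{F} \\ \chi \neq \chi_0}} |\psi(n, \chi)|^2,
\qquad \psi(n,\chi) := \sum_{f \in \mathcal{M}_n} \Lambda(f)\,\chi(f),
\]
with the principal character contribution exactly cancelled by the centering $-q^{h+1}$ (which, by the function-field prime number theorem, is the mean).

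Next, for each primitive even character $\chi$ modulo $T^{n-h}$, the L-function $L(u,\chi)$ is a polynomial in $u$ of degree $N := n-h-2$. By Weil's Riemann Hypothesis its inverse roots can be written as $\sqrt{q}\, e^{i\theta_j(\chi)}$, defining a unitary matrix $\Theta_\chi = \mathrm{diag}(e^{i\theta_j(\chi)})$. The explicit formula then yields
\[
\psi(n,\chi) = -q^{n/2}\,\Tr(\Theta_\chi^n),
\]
so up to a negligible contribution from the few non-primitive characters, the variance equals $q^{h+1}$ times the average of $|\Tr(\Theta_\chi^n)|^2$ over $\chi \in \mathcal{F}\setminus\{\chi_0\}$.

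The crux is Katz's equidistribution theorem: the monodromy of this family is essentially the full unitary group $U(N)$, so as $q\to\infty$ the Frobenius classes $\Theta_\chi$ equidistribute in $PU(N)$ under Haar measure, justifying the replacement
\[
\frac{1}{|\mathcal{F}|-1}\sum_{\chi \neq \chi_0}|\Tr(\Theta_\chi^n)|^2 \;\longrightarrow\; \int_{U(N)} |\Tr(U^n)|^2\, dU.
\]
The Diaconis--Shahshahani identity gives $\int_{U(N)}|\Tr(U^k)|^2\,dU = \min(k,N)$, and since $n \geq N = n-h-2$ this evaluates to $n-h-2$, producing the asymptotic $q^{h+1}(n-h-2)$. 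The main obstacle is the third step, invoking Katz's big-monodromy theorem for even Dirichlet L-functions modulo $T^{n-h}$; the remaining ingredients (character orthogonality, Weil's theorem, and the $U(N)$ computation) are essentially routine.
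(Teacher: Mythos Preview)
Your outline is correct and recovers the statement; it is essentially the original Keating--Rudnick argument. A couple of small technical points you gloss over: the short intervals $I(f;h)$ fix the \emph{top} coefficients, so one first applies the involution $f\mapsto f^\ast$ (reversing coefficients) to turn them into residue classes modulo $T^{n-h}$ before invoking orthogonality of characters; and for even primitive $\chi$ the explicit formula carries an extra $-1$ from the trivial zero at $u=1$, which is harmless. Both are routine.

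However, this is not the route the present paper takes. The paper proves a general variance formula (Theorem~\ref{thm:general_var}) valid for \emph{any} factorization function $a$: expanding $a$ in the basis of irreducible $\mathfrak{S}_n$-characters $X^\lambda$ as $a=\sum_\lambda \hat a_\lambda X^\lambda + b$, one gets
\[
\Var_{f\in\mathcal M_n}\Big(\sum_{g\in I(f;h)} a(g)\Big)\;=\;q^{h+1}\!\!\sum_{\substack{\lambda\vdash n\\ \lambda_1\le n-h-2}} |\hat a_\lambda|^2 + O(q^{h+1/2}).
\]
The key new ingredient is Theorem~\ref{thm:schur_in_zeros}, which identifies $s_\lambda(\Theta_\chi)$ with a character sum of $X^{\lambda'}$; combined with Katz's equidistribution and the Schur-function orthogonality on $U(N)$, this replaces your Diaconis--Shahshahani step. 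Theorem~\ref{thm:b_mangoldtvar} then drops out from the explicit expansion (Proposition~\ref{prop:lambda_exp})
\[
\Lambda(f)=\sum_{r=1}^n (-1)^{n-r} X^{(r,1^{n-r})}(f)+b(f),
\]
so that $|\hat a_\lambda|=1$ exactly on the hooks $(r,1^{n-r})$ with $r\le n-h-2$, of which there are $n-h-2$.

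Your direct approach is shorter and entirely adequate for $\Lambda$ alone. The paper's detour through $\mathfrak{S}_n$-characters and Schur functions buys generality: the same machinery simultaneously handles $\mu$, $\Lambda_j$, $d_k$, $\omega$, etc., and yields the structural decomposition $\mathcal F_n=\mathcal U_n^h\oplus\mathcal V_n^h$ of Theorem~\ref{thm:subspace_decompose1}.
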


For $g\in\mathcal{M}$, the M\"obius function $\mu(g)$ is defined in analogy with the integers by $\mu(g) = (-1)^\ell$ if $g$ is squarefree (that is $g$ has no repeated factors) and $g = P_1\cdots P_\ell$ in its prime factorization, and $\mu(g) = 0$ if $g$ is squareful\footnote{There is a closely related terminology `square-full', which means something quite different -- namely that for prime $P$, if $P|g$, we have $P^2|g$ also. The distinction is important to keep in mind. Square-full numbers will not play a role in this paper.} (that is $g$ is not squarefree). Likewise $\Lambda(g) = \deg(P)$ if $g = P^k$ for a prime $P$ and a power $k \geq 1$, and $\Lambda(g) = 0$ otherwise.	

We introduce a notation to write these results more succinctly. For a function $\eta: \mathcal{M}_n \rightarrow \mathbb{C}$, we define its mean value by
\begin{equation}
	\label{def:expect}
	\mathbb{E}_{f\ \in \mathcal{M}_n} \eta(f):= \frac{1}{q^n} \sum_{f \in \mathcal{M}_n} \eta(f),
\end{equation}
and its variance by
\begin{equation}
	\label{def:variance}
	\Var_{f\in\mathcal{M}_n}\Big(\eta(f)\Big):= \frac{1}{q^n} \sum_{f\in \mathcal{M}_n} \big| \eta(f) - \mathbb{E}_{\mathcal{M}_n}\eta \,\big|^2.
\end{equation}
Note that both the mean value and variance typically depend on the size of the field $q$. As a test of notation, the reader may easily verify that
\begin{equation}
	\label{c_1var}
	\Var_{f\in\mathcal{M}_n}\bigg(\sum_{g\in I(f;h)} 1 \bigg) = 0.
\end{equation}
Likewise we see that \eqref{a_mobiusvar} may be rewritten
\begin{equation}
	\label{d_mobiusvar}
	\Var_{f\in\mathcal{M}_n}\bigg(\sum_{g\in I(f;h)} \mu(g) \bigg) \sim q^{h+1},
\end{equation}
and \eqref{b_mangoldtvar},
\begin{equation}
	\label{e_mangoldtvar}
	\Var_{f\in\mathcal{M}_n}\bigg(\sum_{g\in I(f;h)} \Lambda(g) \bigg) \sim q^{h+1}(n-h-2),
\end{equation}
as $q\rightarrow\infty$. 

We may add another recent result \cite{KeRoRoRu} to this list, due to Keating, the author, Roditty-Gershon, and Rudnick, for the $k$-fold divisor function, which is defined in analogy with the integers by $d_k(f) := |\{(a_1,...,a_k) \in \mathcal{M}^k:\, f = a_1\cdots a_k\}|$.

\begin{thm}
	\label{thm:f_dvar}
	For fixed positive integer $k$, and fixed $0 \leq h \leq n-5$, as $q\rightarrow\infty$,
	\begin{equation}
	\label{f_dvar}
	\Var_{f\in\mathcal{M}_n}\bigg(\sum_{g\in I(f;h)} d_k(g)\bigg) = q^{h+1} \mathcal{I}_k(n,\, n-h-2) + O(q^{h+1/2}),
	\end{equation}
	where $\mathcal{I}_k(m, \,N)$ is the count of lattice points $(x_{ij}) \in (\mathbb{Z})^{k^2}$ satisfying each of the following conditions,
	\begin{enumerate}
		\item $0 \leq x_{ij} \leq N$ for all $1 \leq i,j \leq k$,
		\item $x_{11} + \cdots + x_{kk} = m$,
		\item The array $x_{ij}$ is weakly decreasing across columns and down rows. That is
		$$
		\begin{matrix}
		x_{11}& \geq & x_{12}& \geq & \cdots & \geq & x_{1k}\\
		\vgeq & & \vgeq & & & &\vgeq \\
		x_{21}& \geq &  x_{22}& \geq &\cdots & \geq & x_{2k} \\
		\vgeq & & \vgeq & & & &\vgeq \\
		\vdots& & \vdots & & \ddots & & \vdots \\
		\vgeq & & \vgeq & & & &\vgeq \\
		x_{k1}& \geq & x_{k2}& \geq & \cdots & \geq & x_{kk}\\
		\end{matrix}
		$$
	\end{enumerate}
\end{thm}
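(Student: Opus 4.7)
The plan is to extend the character-sum framework of Keating-Rudnick from $\Lambda$ to $d_k$ and then to evaluate the resulting random matrix integral combinatorially. First, under the standard involution $f(T)\mapsto T^n f(1/T)$, short intervals in $\mathcal{M}_n$ become cosets in $(\mathbb{F}_q[T]/T^{n-h})^*/\mathbb{F}_q^*$, so the indicator of $I(f;h)$ expands in ``even'' Dirichlet characters modulo $T^{n-h}$ (those trivial on $\mathbb{F}_q^*$). Parseval's identity then yields
$$
\Var_{f\in\mathcal{M}_n}\bigg(\sum_{g\in I(f;h)} d_k(g)\bigg) = \frac{q^{h+1}}{\Phi_{\mathrm{ev}}(T^{n-h})}\sum_{\substack{\chi\ne\chi_0 \\ \chi\text{ even}}} \big|\Psi_n(\chi,d_k)\big|^2,
$$
where $\Psi_n(\chi,d_k) := \sum_{f\in\mathcal{M}_n} d_k(f)\chi(f)$. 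Contributions from imprimitive $\chi$ (those actually induced from characters of lower conductor) will be handled separately and absorbed into the error $O(q^{h+1/2})$.

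For primitive even $\chi$ of conductor $T^{n-h}$, the $L$-function $L(u,\chi)=\sum_f \chi(f)u^{\deg f}$ is a polynomial of degree $N = n-h-2$, and by Weil's Riemann hypothesis factors as $L(u,\chi)=\det(I - u\sqrt{q}\,\Theta_\chi)$ with $\Theta_\chi\in U(N)$. Since $L(u,\chi)^k$ is the generating function for $d_k(f)\chi(f)$, one obtains $\Psi_n(\chi,d_k) = q^{n/2}\,[v^n]\det(I-v\Theta_\chi)^k$. Katz's equidistribution theorem then asserts that as $q\to\infty$ the Frobenius conjugacy classes $\Theta_\chi$ equidistribute in $U(N)$ with respect to Haar measure, so the variance becomes asymptotically
$$
q^{h+1}\int_{U(N)}\Big|[v^n]\det(I-vM)^k\Big|^2\,dM.
$$

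To evaluate this Haar integral, I would expand $\det(I-vM)^k$ via the dual Cauchy identity as
$$
\det(I-vM)^k = \sum_{\lambda\subseteq k\times N} (-v)^{|\lambda|}\,s_{\lambda'}(M)\,s_{\lambda'}(1^k),
$$
where the sum runs over partitions fitting in a $k\times N$ rectangle. Schur orthogonality on $U(N)$ then collapses the integral to $\sum_{\lambda\subseteq k\times N,\,|\lambda|=n} s_{\lambda'}(1^k)^2$, which via the RSK correspondence (or MacMahon's plane partition generating function) equals the number of $k\times k$ plane partitions with entries in $[0,N]$ and total sum $n$, namely $\mathcal{I}_k(n,N)$.

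The main obstacle is the final combinatorial identification: while the character-theoretic reduction and Katz equidistribution are by now standard tools for variance problems of this type, pinning the Haar integral down as exactly the lattice-point count $\mathcal{I}_k(n,n-h-2)$ carries the essential content of the theorem and requires careful symmetric function bookkeeping via dual Cauchy, Schur orthogonality, and an RSK-type bijection. The error term $O(q^{h+1/2})$ additionally demands uniform control of the imprimitive character contributions together with Deligne-style square-root savings in the rate of convergence in Katz's equidistribution.
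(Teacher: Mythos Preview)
Your approach is correct and is essentially the original argument of \cite{KeRoRoRu}: reduce to character sums via the involution, use $L(u,\chi)^k$ as the generating function for $d_k(f)\chi(f)$, invoke Katz's equidistribution, and evaluate the Haar integral by dual Cauchy and Schur orthogonality. (Two small slips: for even primitive $\chi$ one has $L(u,\chi)=(1-u)\det(I-u\sqrt{q}\,\Theta_\chi)$, the trivial zero contributing only to the error; and your dual Cauchy expansion should pair $s_\lambda(1^k)$ with $s_{\lambda'}(M)$, not $s_{\lambda'}$ on both sides.)

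The present paper takes a different route. It first proves a general variance formula (Theorem~\ref{thm:general_var}) valid for \emph{any} factorization function $a$: the variance is $q^{h+1}\sum_{\lambda\vdash n,\,\lambda_1\le n-h-2}|\hat a_\lambda|^2$, where $\hat a_\lambda$ are the coefficients of $a$ in the basis of irreducible $\mathfrak{S}_n$-characters $X^\lambda$, pulled back to $\mathcal{M}_n$ via factorization type. The key new ingredient (Theorem~\ref{thm:schur_in_zeros}) is a combinatorial explicit formula identifying $s_\lambda(\Theta_\chi)$ with $(-1)^n q^{-n/2}\sum_f X^{\lambda'}(f)\chi(f)$ up to $O(q^{-1/2})$, so that Schur orthogonality on $U(N)$ becomes orthogonality of the character sums $\sum_f X^\lambda(f)\chi(f)$. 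Theorem~\ref{thm:f_dvar} then falls out by computing the specific expansion $d_k(f)=\sum_{\ell(\lambda)\le k}s_\lambda(1^k)\,X^\lambda(f)+b(f)$ via Frobenius's formula (Proposition~\ref{prop:divisor_exp}) and reading $\sum s_\lambda(1^k)^2$ as $\mathcal{I}_k$ through the Gelfand--Tsetlin interpretation of $s_\lambda(1^k)$. Your direct method is shorter for $d_k$ alone, since the identity $\sum_f d_k(f)\chi(f)u^{\deg f}=L(u,\chi)^k$ is immediate; the paper's detour buys a uniform framework that simultaneously handles $\mu$, $\Lambda$, $\Lambda_j$, $\omega$, and yields the structural decomposition of Theorem~\ref{thm:subspace_decompose1}.
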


Of the evaluations \eqref{c_1var} through \eqref{f_dvar}, only \eqref{c_1var} may proved easily (in fact trivially). Nonetheless, the estimate in \eqref{d_mobiusvar}, while deep, at least has a heuristic meaning that is easy to understand; it is just the claim that in expanding the variance into a sum over two indices, the M\"obius function is so oscillatory that off-diagonal terms make no contribution. That is, \eqref{d_mobiusvar} may be understood heuristically in the following way:
\begin{align*}
\Var_{f\in\mathcal{M}_n}\bigg( \sum_{g\in I(f;h)} \mu(g)\bigg) &= \frac{q^{h+1}}{q^n} \sum_{\substack{g_1, g_2 \\ \deg(g_1-g_2) \leq h}} \mu(g_1)\mu(g_2) \\
&\approx \frac{q^{h+1}}{q^n} \sum_{\substack{g_1,g_2 \in \mathcal{M}_n \\ g_1 = g_2}} \mu(g_1)\mu(g_2).
\end{align*}
See for instance \cite{Ng} for a broader application of this heuristic in the setting of the integers.

The evaluation of the $k$-fold divisor function in \eqref{f_dvar} is obviously of a more complicated sort, even heuristically. In particular it may be seen that $\mathcal{I}_k(n;\, n-h-2)$ is a piecewise polynomial, and for $k\geq 3$ as $h$ ranges from $0$ to $n-5$, it exhibits several phase changes in its behavior in various ranges of $h$ (see \cite[Sec. 4]{KeRoRoRu}). The arithmetic reason for these phase changes in particular is rather mysterious.

Nonetheless, we make the following claim: \eqref{f_dvar} may be understood arithmetically as nothing more complicated than a combination of the phenomena that give rise to \eqref{c_1var} and \eqref{d_mobiusvar}. For any degree $n$ and short interval size $h$, we will observe that we may decompose
$$
d_k(f) = u(f) + v(f),
$$
where $u$ and $v$ are arithmetic functions, with $u(f)$ regular enough within the specified short intervals that (in analogy with \eqref{c_1var}),
\begin{equation}
\label{g_uvar}
\Var_{f\in\mathcal{M}_n}\bigg( \sum_{g\in I(f;h)} u(g)\bigg) = o(q^{h+1}),
\end{equation}
while $v(f)$ is oscillatory and
\begin{equation}
\label{h_vvar}
\Var_{f\in\mathcal{M}_n} \bigg( \sum_{g\in I(f;h)} v(g) \bigg) \sim q^{h+1} \cdot \frac{1}{q^{n+1}}\sum_{g\in \mathcal{M}_n} |v(g)|^2.
\end{equation}
That is, as with the M\"obius funcion, only diagonal terms contribute to its variance, in analogy with \eqref{d_mobiusvar}.

From Cauchy-Schwarz, it follows that
$$
\Var_{f\in\mathcal{M}_n} \bigg( \sum_{g\in I(f;h)} d_k(g) \bigg) \sim q^{h+1} \cdot \frac{1}{q^n}\sum_{g\in \mathcal{M}_n}\; |v(g)|^2.
$$

This decomposition is explicit, based on symmetric function theory, and is given below -- the quantity $\mathcal{I}_k(n;n-h-2)$ may be recovered from it. That \eqref{g_uvar} holds for our function $u$ will be a relatively shallow fact (having to do with the number of zeros of a certain family of $L$-functions), and one may think of $u$ as being the largest piece of $d_k$ with enough regularity that \eqref{g_uvar} holds for this reason. The intricacies of the variance estimate in \eqref{f_dvar} may thus be thought of as resulting from the fact that this decomposition changes for various values of $n$ and $h$.

Such a decomposition is not limited to the $k$-fold divisor function. Any arithmetic functions whose value depends only upon the factorization type of its argument may be decomposed in this way and the variance of its sum over short intervals may thus be evaluated. What me mean by factorization type is defined formally below; roughly this is the size of all prime factors, listed with multiplicity. The functions $\mu(f)$, $\Lambda(f)$, $\Lambda_j(f)$, and $d_k(f)$ are all examples to which the result may be applied.

The evaluation of variance for such a general class of function is closely related to the known phenomena that the zeros of $L$-functions distribute like the eigenvalues of a random matrix. Indeed, the result we prove may be seen to be an equivalent restatement of an equidistribution result of Katz, Theorem \ref{katz} below. (We make use of Katz's Theorem \ref{katz} in our proof, and so we \emph{do not} arrive at an independent proof of it however.) 

We will use this general variance evaluation to recover several of the results that have been mentioned above with relatively little extra work and to derive new results that seem difficult by other means. New conjectures in the setting of the integers are put forward based on these results. Perhaps of particular interest, we consider sums of the function $\omega(n)$, counting prime factors: based on a function field model, we conjecture that the variance of sums of this function is somewhat smaller than a naive heuristic would lead one to believe.

In addition to yielding a pleasant general formula, the decomposition results of this paper help elucidate why random matrix universality should make an appearance in number theory. A complementary perspective as to the arithmetic reasons for the appearance of random matrix theory in number theory, dealing with the integers themselves, has appeared in the work of Bogomolny and Keating \cite{BoKe1,BoKe2} and in work of Conrey and Keating \cite{CoKe1,CoKe2,CoKe3,CoKe4}. It would be very interesting to see if the combinatorial decompositions in the present paper can be extended to the setting of the integers in a way consistent with various conjectures that have been made there.


We finally note a recent application of our main results to algebraic geometry proper; by combining Theorem \ref{thm:general_var} with other work of their own, Hast and Matei \cite{HaMa} have given a geometric interpretation of this result. Indeed, it may be possible to prove Theorem \ref{thm:general_var} of this paper rather more directly through algebro-geometric means; we hope nonetheless that the proof given here will be of use especially to analytic number theorists more comfortable with the ideas we will make use of.

\section{The symmetric group and factorization type}
\label{sec:2}

\subsection{}
\label{sec:intro2}
The decomposition described in section \ref{sec:1} and the corresponding estimates for variance hinge upon a well known analogy between the prime factors of a random integer or element of $\mathbb{F}_q[T]$ and the cycles of a random permutation. (Later an application of symmetric function theory to the zeros of $L$-functions will play an equally important and dual role.)

We begin by recalling how it is that factorizations over $\mathbb{F}_q[T]$ resemble the cycles of permutations.

Recall that $\mathcal{M}_n$, the collection of monic polynomials of degree $n$, consists of $q^n$ elements. Recall also that a partition $\lambda$ of a positive integer $n$ is defined to be a sequence of non-increasing positive integers $(\lambda_1, \lambda_2,...,\lambda_k)$ such that for $|\lambda|:= \lambda_1+\cdots+\lambda_k$ we have $|\lambda| = n$. We will also use the notation $\lambda \vdash n$ to indicate that $\lambda$ is a partition of $n$.

\begin{dfn}
	\label{dfn:factorizationtype}
	For an element $f$ of $\mathcal{M}_n$ that is squarefree, if $f$ has prime factorization $f = P_1 P_2 \cdots P_k$ with $\deg P_1 \geq \deg P_2 \geq ...$, we define the \emph{factorization type} to be the partition of $n$ given by
	$$
	\tau_f = (\deg P_1,...,\deg P_k).
	$$
	For $f$ that is not squarefree (i.e. squareful) we adopt the convention that $\tau_f = \emptyset$ (the empty partition).
\end{dfn}
In the above definition we have fixed our attention on the squarefrees because as $q\rightarrow\infty$ nearly all elements of $\mathcal{M}_n$ are squarefree (see \cite[Prop 2.3]{Ros}, or \cite[Thm. 4.1]{Weiss}):
\begin{equation}
	\label{eq:squarefree_density}
	\frac{1}{q^n} \#\{f \in \mathcal{M}_n:\, f \;\textrm{squarefree}\} = 1 - O(1/q).
\end{equation}

Note that likewise any element $\sigma$ of the the symmetric group $\mathfrak{S}_n$ on $n$ elements can be written uniquely as a product of disjoint cycles: $\sigma = \sigma_1\sigma_2\cdots \sigma_k$. Denote the lengths of the cycles by $|\sigma_i|$. For instance $|(245)| = 3$, where we have used cycle notation to represent the permutation.
\begin{dfn}
	\label{dfn:cycletype}
	For an element $\sigma\in \mathfrak{S}_n$, with $\sigma = \sigma_1\sigma_2\cdots \sigma_k$ and $|\sigma_1| \geq |\sigma_2| \geq ...$ we define the \emph{cycle type} to be the partition of $n$ given by
	$$
	\tau_\sigma = (|\sigma_1|,...,|\sigma_k|).
	$$	
\end{dfn}

It is well-known that as $q\rightarrow\infty$ the distribution over $\mathcal{M}_n$ of factorization types tends to the distribution of cycle types in $\mathfrak{S}_n$ \cite{AnBaRu}:
\begin{prop}
	\label{prop:factor_to_cycle}
	For a partition $\lambda \vdash n$,
	$$
	\lim_{q\rightarrow\infty}\mathbb{P}_{f \in \mathcal{M}_n}(\tau_f = \lambda) = \mathbb{P}_{\sigma \in \mathfrak{S}_n}(\tau_\sigma = \lambda).
	$$
\end{prop}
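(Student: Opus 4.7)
The plan is to establish this via direct enumeration on both sides, using only the prime polynomial theorem in $\mathbb{F}_q[T]$ and the classical cycle-index formula for $\mathfrak{S}_n$.

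First I would recall that for each partition $\lambda \vdash n$, if $m_i = m_i(\lambda)$ denotes the number of parts of $\lambda$ equal to $i$, then the classical formula (following from the orbit-stabilizer theorem applied to the conjugation action of $\mathfrak{S}_n$ on itself) gives
\begin{equation*}
\mathbb{P}_{\sigma \in \mathfrak{S}_n}(\tau_\sigma = \lambda) = \frac{1}{z_\lambda}, \qquad z_\lambda := \prod_i i^{m_i}\, m_i!.
\end{equation*}
This takes care of the right-hand side of the asserted limit.

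Next I would compute the left-hand side directly. Let $\pi_q(d)$ be the number of monic irreducibles of degree $d$ in $\mathbb{F}_q[T]$; the prime polynomial theorem (derived from $\sum_{d|n} d\, \pi_q(d) = q^n$ by M\"obius inversion) gives
\begin{equation*}
\pi_q(d) = \frac{q^d}{d} + O\bigl(q^{d/2}/d\bigr).
\end{equation*}
A squarefree monic $f \in \mathcal{M}_n$ of factorization type $\lambda$ is obtained uniquely by choosing, for each part-size $i$, an unordered collection of $m_i$ distinct monic irreducibles of degree $i$. Hence the count of such $f$ is $\prod_i \binom{\pi_q(i)}{m_i}$, and since $\lambda \neq \emptyset$ forces $f$ squarefree under Definition~\ref{dfn:factorizationtype}, this is exactly the count of $f \in \mathcal{M}_n$ with $\tau_f = \lambda$. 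Therefore
\begin{equation*}
\mathbb{P}_{f \in \mathcal{M}_n}(\tau_f = \lambda) = \frac{1}{q^n}\prod_i \binom{\pi_q(i)}{m_i}.
\end{equation*}

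Finally I would substitute the asymptotic for $\pi_q(i)$ and take $q \to \infty$. Since $\binom{\pi_q(i)}{m_i} = \pi_q(i)^{m_i}/m_i! + O(\pi_q(i)^{m_i - 1})$ for each fixed $i$ and $m_i$, one gets
\begin{equation*}
\prod_i \binom{\pi_q(i)}{m_i} = \prod_i \frac{(q^i/i)^{m_i}}{m_i!}\bigl(1 + O(1/q)\bigr) = \frac{q^n}{z_\lambda}\bigl(1+O(1/q)\bigr),
\end{equation*}
using $\sum_i i\, m_i = n$. Dividing by $q^n$ and letting $q \to \infty$ yields $1/z_\lambda$, matching the permutation side. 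There is no real obstacle here beyond bookkeeping; the only nontrivial input is the prime polynomial theorem, which in turn ensures that the error terms from $\pi_q(i) = q^i/i + O(q^{i/2}/i)$ and from the combinatorial $\binom{\cdot}{m_i}$ expansion are both $O(1/q)$, uniformly in $\lambda \vdash n$ for fixed $n$. The squareful case $\tau_f = \emptyset$ (excluded from the statement for $\lambda \vdash n$ with $n \geq 1$) is handled separately by \eqref{eq:squarefree_density}.
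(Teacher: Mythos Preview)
Your argument is correct and is precisely the standard direct-counting proof of this fact. The paper itself does not supply a proof of Proposition~\ref{prop:factor_to_cycle}; it simply quotes the result as well known, with a citation to \cite{AnBaRu}, and immediately records the explicit formula \eqref{eq:cycleprob} for $\mathbf{p}(\lambda)=1/z_\lambda$. So there is no ``paper's proof'' to compare against, but your approach is exactly the one that lies behind the cited statement: count squarefree $f$ of type $\lambda$ as $\prod_i \binom{\pi_q(i)}{m_i}$, insert the prime polynomial theorem, and divide by $q^n$.

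One small remark on the bookkeeping: your claim that all relative errors are $O(1/q)$ is correct, but it is worth noting why the PNT error does not spoil this. For $i=1$ one has $\pi_q(1)=q$ exactly, while for $i\ge 2$ the exact formula $\pi_q(i)=\tfrac{1}{i}\sum_{e\mid i}\mu(i/e)q^e$ shows the secondary terms are at most $O(q^{\lfloor i/2\rfloor})$, giving a relative error $O(q^{-\lceil i/2\rceil})=O(1/q)$. Together with $\binom{\pi_q(i)}{m_i}=\pi_q(i)^{m_i}/m_i!\cdot(1+O(m_i^2/\pi_q(i)))$ for fixed $m_i$, this justifies the uniform $O(1/q)$ you assert. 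Your final aside about $\tau_f=\emptyset$ is unnecessary for the proposition as stated (since $\lambda\vdash n$ with $n\ge 1$ rules out the empty partition), but it does no harm.
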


Here and in what follows we have used elementary probabilistic notation, for instance:
$$
\mathbb{P}_{f \in \mathcal{M}_n}(\tau_f = \lambda) := \frac{1}{q^n} \#\{f\in \mathcal{M}_n:\, \tau_f = \lambda\}.
$$ 

There is a well-known expression for the probability that a random permutation has a cycle structure $\lambda$, due to Cauchy. We use the standard partition frequency notation $\lambda = \langle 1^{m_1} 2^{m_2} \cdots j^{m_j} \rangle$; this means for $\lambda = (\lambda_1,\lambda_2,...)$, that $m_1$ of the parts of $\lambda$ are equal to $1$, $m_2$ are equal to $2$, etc. So if $\tau_\sigma = \langle 1^{m_1}2^{m_2}\cdots j^{m_j}\rangle$, $\sigma$ has $m_1$ $1$-cycles, $m_2$ $2$-cycles, etc. With this notation, Cauchy's result is that
\begin{equation}
	\label{eq:cycleprob}
	\mathbb{P}_{\sigma \in \mathfrak{S}_n}(\tau_\sigma = \lambda) = \mathbf{p}(\lambda), \quad \textrm{where}\quad  \mathbf{p}(\lambda) := \prod_{i=1}^j \frac{1}{i^{m_i}m_i!}
\end{equation}

It is worth mentioning a recent result of Andrade, Bary-Soroker, and Rudnick \cite{AnBaRu} that has generalized this picture. They show that the factorization types of a random polynomial $f$ and a shift $f + \alpha$  become independent as $q\rightarrow\infty$:
\begin{thm}[Andrade -- Bary-Soroker -- Rudnick]
	\label{thm:factorizationindependence}
	For partitions $\lambda, \nu \vdash n$, uniformly for $\deg(\alpha) < n$,
	$$
	\mathbb{P}_{f \in \mathcal{M}_n}(\tau_f = \lambda,\,\tau_{f+\alpha} = \nu) = \mathbf{p}(\lambda)\mathbf{p}(\nu) + O(q^{-1/2}).
	$$
\end{thm}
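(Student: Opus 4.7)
The plan is to recast the joint probability on the left-hand side as the density of $\mathbb{F}_q$-points on an étale cover of affine $n$-space and then invoke an effective function-field Chebotarev density theorem. Parametrize $\mathcal{M}_n \cong \mathbb{A}^n(\mathbb{F}_q)$ by the coefficient map $f = T^n + c_{n-1}T^{n-1} + \cdots + c_0 \leftrightarrow (c_0,\ldots,c_{n-1})$, and consider the ``universal'' polynomial $F(T)=T^n+c_{n-1}T^{n-1}+\cdots+c_0$ over the rational function field $K := \mathbb{F}_q(c_0,\ldots,c_{n-1})$. For $\alpha \in \mathbb{F}_q[T]$ nonzero of degree less than $n$ (the case $\alpha = 0$ trivially violates the asserted formula and is implicitly excluded), let $L_\alpha$ be the splitting field of $F(T)\cdot(F(T)+\alpha)$ over $K$. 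The standard dictionary between the factorization type of a polynomial and the cycle type of Frobenius on its roots translates the event $\{\tau_f = \lambda,\ \tau_{f+\alpha}=\nu\}$ into the event that the Frobenius conjugacy class at the specialization $(c_0,\ldots,c_{n-1})=f$ lies in $C_\lambda \times C_\nu \subset G := \mathrm{Gal}(L_\alpha/K)$, where $C_\lambda\subset\mathfrak{S}_n$ denotes the class of permutations of cycle type $\lambda$.

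The key claim is that the geometric Galois group satisfies $G \cong \mathfrak{S}_n \times \mathfrak{S}_n$, acting naturally on the roots of $F$ and of $F+\alpha$ respectively. Granting this, Deligne's equidistribution theorem applied to the corresponding finite étale cover of $\mathbb{A}^n_{\mathbb{F}_q}$ (the function-field Chebotarev theorem in effective form) yields
\begin{equation*}
\mathbb{P}_{f\in\mathcal{M}_n}(\tau_f = \lambda,\ \tau_{f+\alpha} = \nu) = \frac{|C_\lambda|}{n!}\cdot\frac{|C_\nu|}{n!} + O(q^{-1/2}),
\end{equation*}
uniformly in $\alpha$ over the parameter space. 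Cauchy's formula \eqref{eq:cycleprob} gives $|C_\lambda|/n! = \mathbf{p}(\lambda)$, which yields exactly the asserted asymptotic.

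To verify the Galois-theoretic claim, first note that $F$ has Galois group $\mathfrak{S}_n$ over $K$ (a classical fact, provable by exhibiting one specialization with $\mathfrak{S}_n$ Galois group and then using that having a smaller Galois group is Zariski-closed), and likewise for $F+\alpha$. Hence $G$ is a subgroup of $\mathfrak{S}_n \times \mathfrak{S}_n$ whose two projections are both surjective. By Goursat's lemma any such proper subgroup corresponds to a nontrivial isomorphism between common proper quotients of $\mathfrak{S}_n$; for $n \geq 5$ the only candidates are $\mathfrak{S}_n$ itself (forcing the two splitting fields to coincide) and $\mathbb{Z}/2\mathbb{Z}$ (the sign character, i.e.\ the discriminants of $F$ and $F+\alpha$ differing by a square in $K$). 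Both are excluded by producing a single specialization $f_0 \in \mathcal{M}_n(\mathbb{F}_{q^m})$ for which the splitting fields of $f_0$ and $f_0 + \alpha$ are linearly disjoint --- concretely, an $f_0$ for which $\mathrm{disc}(f_0)\cdot\mathrm{disc}(f_0+\alpha)$ is a non-square in $\mathbb{F}_{q^m}$ and the two splitting fields have trivial intersection. The exceptional cases $n \leq 4$ require a separate but routine inspection of the subgroup lattice of $\mathfrak{S}_n$.

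The principal obstacle is this Galois-group computation, and within it the exclusion of the sign-character quotient: one must verify that $\mathrm{disc}(F)\cdot\mathrm{disc}(F+\alpha)$ is not a square in $K$ uniformly in $\alpha$. This is concrete but delicate, typically handled by an explicit specialization of the coefficients $(c_0,\ldots,c_{n-1})$ chosen so that the resulting factorization of $f_0$ (and correspondingly of $f_0+\alpha$) controls both the individual Galois groups and the residual disjointness.
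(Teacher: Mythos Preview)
The paper does not prove this theorem; it is quoted as a result of Andrade--Bary-Soroker--Rudnick \cite{AnBaRu} and no argument is given here. So there is no ``paper's own proof'' to compare against. Your outline is in fact the strategy of the original source: interpret the joint factorization-type event as a Frobenius condition on an \'etale $\mathfrak{S}_n\times\mathfrak{S}_n$-cover of $\mathbb{A}^n$, verify that the geometric monodromy is the full product, and apply an explicit Chebotarev/Deligne equidistribution theorem.

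That said, your sketch has real gaps at exactly the point you flag. First, the Goursat step: you propose to rule out the diagonal and the sign-quotient by exhibiting a single good specialization, but you do not produce one, and you must do so \emph{for every} nonzero $\alpha$ with $\deg\alpha<n$. The actual arguments in the literature proceed differently (e.g.\ via ramification at the discriminant loci of $F$ and $F+\alpha$, which are distinct hypersurfaces when $\alpha\neq 0$), rather than by ad hoc specialization. Second, your discriminant-product criterion collapses in characteristic $2$, where every element is a square; the result in that case requires a separate treatment (cf.\ Carmon \cite{Ca}), which your outline does not address. Third, the claimed uniformity of the $O(q^{-1/2})$ in $\alpha$ requires controlling the sum of Betti numbers of the relevant sheaf uniformly over the family of covers indexed by $\alpha$; this is true here because the degree and the ambient dimension are fixed once $n$ is, but it deserves a sentence. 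Finally, the small cases $n\le 4$ are not ``routine'' in the sense of being automatic --- for $n=4$ the quotient $\mathfrak{S}_4\twoheadrightarrow\mathfrak{S}_3$ gives additional Goursat obstructions that must be excluded.
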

In fact they demonstrate this independence even for multiple shifts: the factorization types of $f+\alpha_1, f+ \alpha_2, ..., f+\alpha_k$ become independent as well.

\subsection{}
\label{sec:intro3}
In this paper we will be concerned with the distribution of arithmetic functions $a: \mathcal{M} \mapsto \mathbb{C}$ such that $a(f)$ depends only upon the size and exponents of the prime factors of $f$. To make a more formal definition, if $f$ has prime factorization $ P_1^{e_1}\cdots P_k^{e_k}$, with $P_1,...,P_k$ monic primes, we call the data $(\deg P_1, e_1; \cdots ;\allowbreak \deg P_k, e_k)$, the \emph{extended factorization type} of $f$. We will be concerned with functions $a$ such that $a(f)$ depends only on the extended factorization type of $f$, and we call such functions \emph{factorization functions}. The class of factorization functions includes, for instance, the M\"obius function $\mu(f)$, the von Mangoldt function $\Lambda(f)$, the count-of-divisors function $d(f)$, the indicator function of degree $n$ polynomials $\mathbf{1}[\deg(f) = n]$, the indicator function of squarefree polynomials $\mu(n)^2$, etc. It does not include Dirichlet characters, for instance.

It is evident that for each $n$, the linear space of factorization functions supported on degree $n$ polynomials is of finite dimension. The space of factorization functions supported on degree $n$ squarefree polynomials is likewise of (smaller) finite dimension. In invoking the symmetric group, Proposition \ref{prop:factor_to_cycle} and Theorem \ref{thm:factorizationindependence} suggest that the space of factorization functions has an important basis that may provide useful information: namely the irreducible characters of $\mathfrak{S}_n$. 

In describing how such characters may be applied to elements of $\mathbb{F}_q[T]$, we suppose the reader is familiar with the most basic outlines of representation theory over $\mathfrak{S}_n$, along the lines of for instance Chapter 4 of \cite{FuHa}. We recall that the space of class functions of $\mathfrak{S}_n$ are those functions $a(\sigma)$ with a value depending only on the cycle type the permutation $\sigma$ and that a basis for such functions is given by the irreducible characters, for which we use the notation\footnote{We use the letter $X$ rather than the more traditional $\chi$ to distinguish these characters from Dirichlet characters which will make an appearance later on.} 
$$
X^\lambda(\sigma).
$$
If $\sigma$ has cycle type $\tau$, sometimes instead of $X^\lambda(\sigma)$ we write $X^\lambda(\tau)$, since $X^\lambda$ depends only on cycle type. Such characters are indexed by partitions $\lambda \vdash n$, and there is a one-to-one correspondence between irreducible characters of $\mathfrak{S}_n$ and partitions of $n$. These characters satisfy the orthogonality relation:
\begin{equation}
	\label{eq:ortho_sym}
	\mathbb{E}_{\sigma\in \mathfrak{S}_n} X^{\lambda_1}(\sigma) X^{\lambda_2}(\sigma) = \delta_{\lambda_1 = \lambda_2}.
\end{equation}

For an element $f\in \mathbb{F}_q[T]$, for $\lambda \vdash n$, we define
$$
X^\lambda(f) := \begin{cases} X^\lambda(\tau_f) & \text{for}\, \text{if} \deg(f) = n\, \text{and}\, f\, \text{is squarefree} \\
0 & \text{otherwise}.
\end{cases}
$$

It is easy to see from this definition that for any factorization function $a$, there exists a unique decomposition
\begin{equation}
	\label{eq:factfourier}
	a(f) = \sum_{\lambda} \hat{a}_\lambda X^{\lambda}(f) + b(f),
\end{equation}
where $b(f)$ is a function supported on the squarefuls, $\hat{a}_\lambda$ are constants that depend on the function $a$ and are defined by this relation, and the sum is over all partitions. (Note that for any particular $f$ of degree $n$, the sum in \eqref{eq:factfourier} will be a finite sum over $\lambda \vdash n$, all other terms in the summand being $0$.)

Note that from Proposition \ref{prop:factor_to_cycle} and the orthogonality relation \eqref{eq:ortho_sym} we may equivalently define the coefficients $\hat{a}_\lambda$ for $\lambda \vdash n$ by
$$
\hat{a}_\lambda := \lim_{q\rightarrow\infty} \frac{1}{q^n}\sum_{f \in \mathcal{M}_n} a(f) X^{\lambda}(f).
$$
For instance, since $X^{(n)}$ is the trivial character, we have $\mathbb{E}_{f\in \mathcal{M}_n} a(f) \rightarrow \hat{a}_{(n)},$ as $q\rightarrow\infty$.

Hast and Matei in \cite[Thm 4.4]{HaMa} have considered a class of functions called arithmetic functions of von Mangoldt type, which is similar to the class of factorization functions as defined here (see \cite{HaMa} for details of the definition). For this class of functions, Hast and Matei prove what may be thought of as a first-order short interval analogue of Andrade, Bary-Soroker, and Rudnick's result in Theorem \ref{thm:factorizationindependence}. Rewritten in the notation used above: 
\begin{thm}
\label{thm:hastmatei}
For a fixed arithmetic function of von Mangoldt type $a(f)$, and fixed $n\geq 4$, $1 \leq h \leq n-3$, and $k\geq 1$, 
\begin{equation}
\label{eq:hastmatei}
\frac{1}{q^n} \sum_{f\in \mathcal{M}_n} \Big(\sum_{g \in I(f;h)} a(g)\Big)^k = q^{k(h+1)} \Big( \mathbb{E}_{f\in\mathcal{M}_n} a(f)\Big)^k + O(q^{(k-1)(h+1)}),
\end{equation}
as $q\rightarrow\infty$.
\end{thm}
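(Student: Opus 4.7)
The plan is to expand the $k$-th moment, swap the order of summation to reduce to a correlation sum, and then invoke the multi-shift extension of Andrade, Bary-Soroker, and Rudnick's independence result (the generalization of Theorem \ref{thm:factorizationindependence} to $k$ shifts, mentioned just after its statement). Parameterizing the short interval as $g = f + \alpha$ with $\deg \alpha \leq h$ yields
\[
\frac{1}{q^n} \sum_{f \in \mathcal{M}_n} \Big(\sum_{g \in I(f;h)} a(g)\Big)^k
= \sum_{\substack{\alpha_1,\ldots,\alpha_k \\ \deg \alpha_i \leq h}} \mathbb{E}_{f \in \mathcal{M}_n} \prod_{i=1}^k a(f+\alpha_i).
\]
I would then split the outer sum according to whether the $\alpha_i$ are all distinct (the \emph{generic} tuples) or some of them coincide (the \emph{diagonal} tuples).

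For the generic tuples, the goal is to show that the factorization types of $f+\alpha_1,\ldots,f+\alpha_k$ are asymptotically independent uniformly in the $\alpha_i$, so that the inner expectation factors as $\prod_i \mathbb{E}_f a(f) = (\mathbb{E} a)^k$. Since there are $(q^{h+1})^k + O(q^{(k-1)(h+1)})$ generic tuples, this produces the main term $q^{k(h+1)}(\mathbb{E} a)^k$. For the diagonal tuples there are only $O_k(q^{(k-1)(h+1)})$ of them—pick the coinciding pair, its common value, and the remaining $k-2$ coordinates—and since a von Mangoldt type function is bounded independently of $q$, each diagonal tuple contributes $O(1)$ after the average over $f$, so their total is already absorbed into the claimed error.

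The main obstacle is the quality of the error in the generic case. A naive bound of $O(q^{-1/2})$ per tuple from Theorem \ref{thm:factorizationindependence}, summed over $q^{k(h+1)}$ tuples, gives $O(q^{k(h+1)-1/2})$, which is larger than $O(q^{(k-1)(h+1)})$ whenever $h \geq 0$. To close this gap one cannot treat the independence result as a black box; one has to exploit cancellation in the error terms across different tuples of shifts. Concretely, one would encode the error as a sum of Frobenius traces on an appropriate family of sheaves, and then apply Katz-style equidistribution for the monodromy group of the corresponding family of short-interval $L$-functions, obtaining a uniform multi-shift independence statement with savings strong enough to survive the outer sum. The combinatorial reduction is routine; the real content, and the heart of Hast and Matei's argument, is identifying the monodromy and extracting this uniformity.
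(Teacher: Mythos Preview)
The paper does not prove this theorem; it is quoted as a result of Hast and Matei \cite{HaMa}, with the reader referred to their paper for both the precise definition of ``von Mangoldt type'' and the proof. There is therefore no in-paper argument to compare your proposal against.

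On the merits of your sketch: the combinatorial reduction (expand the $k$-th power, swap sums, split into generic and diagonal shift-tuples) is sound, and your count of diagonal tuples together with the boundedness of $a$ correctly disposes of that piece. You also correctly identify the real obstruction: applying Theorem~\ref{thm:factorizationindependence} tuple-by-tuple gives an error of size $q^{k(h+1)-1/2}$, which exceeds the target $q^{(k-1)(h+1)}$ for every $h\geq 0$, so cancellation across shifts is essential rather than optional. However, your final paragraph is a description of where the work lies, not a proof: ``encode the error as Frobenius traces on an appropriate family of sheaves'' and ``apply Katz-style equidistribution'' names a strategy without specifying the sheaf, the base variety parametrizing the shifts, or the monodromy calculation that would force the required cancellation. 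Since you yourself say this is ``the real content'' of the argument, what you have written is an accurate map of the difficulty but does not supply the missing geometric input.
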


In the case $k=2$ this is sufficient to recover the upper bound of $O(q^{h+1})$ for the variance computed by Theorem \ref{thm:b_mangoldtvar} of Keating and Rudnick, though not the constant $n-h-2$. 

We note the approach of Hast and Matei to study short intervals is rather different from ours -- in particular they do not require any of the facts about $L$-functions that we will make use of in what follows. Other related recent papers with a perspective similar to Hast and Matei's, making use of the connection between polynomials over a finite field and the symmetric group to investigate arithmetic functions defined on $\mathcal{M}$, include \cite{ChElFa, Gad}.

\section{A statement of main results}
\label{sec:3}

\subsection{}
We are now in a position to state our main results.

\begin{thm}
	\label{thm:general_var}
	For $a(f)$ a fixed factorization function, and fixed $h$ and $n$ with $0\leq h \leq n-5,$
	\vspace{1mm}
	\begin{equation}
		\label{eq:general_var}
		\Var_{f\in \mathcal{M}_n}\Big( \sum_{g \in I(f;h)} a(g)\Big) = q^{h+1} \sum_{ \substack{\lambda \vdash n \\ \lambda_1 \leq n-h-2}} |\hat{a}_\lambda|^2 + O(q^{h+1/2}).
	\end{equation}
\end{thm}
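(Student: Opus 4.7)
The plan is to combine the character decomposition \eqref{eq:factfourier} with the Keating-Rudnick Dirichlet-character framework for short intervals and with Katz's equidistribution theorem (Theorem \ref{katz}).

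First I would start from the decomposition
$$a(f) = \sum_{\lambda \vdash n} \hat{a}_\lambda X^\lambda(f) + b(f),$$
where $b$ is supported on squareful polynomials (density $O(1/q)$ in $\mathcal{M}_n$ by \eqref{eq:squarefree_density}). A direct second-moment computation — using that, by the Chinese remainder theorem, joint squarefulness at two distinct shifts in $I(f;h)$ occurs with probability $O(1/q^2)$ — shows $\sum_{g \in I(f;h)} b(g)$ has variance $O(q^h)$, while by Cauchy-Schwarz its covariance with the character part is $O(q^{h+1/2})$. Both are absorbed into the error term, so it suffices to compute the variance of the character part $\sum_\lambda \hat{a}_\lambda \sum_{g \in I(f;h)} X^\lambda(g)$.

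Next, following Keating-Rudnick, the coefficient-reversal $g \mapsto g^*(T) := T^n g(1/T)$ converts $I(f;h)$ into the arithmetic progression $\{g \in \mathcal{M}_n : g^* \equiv f^* \pmod{T^{n-h}}\}$. Applying orthogonality of the $q^{n-h-1}$ even characters modulo $T^{n-h}$, and noting that as $f$ ranges over $\mathcal{M}_n$ the residue $f^* \bmod T^{n-h}$ equidistributes over one-units (yielding $\mathbb{E}_f \chi_1(f^*)\overline{\chi_2(f^*)} = \delta_{\chi_1 = \chi_2}$), the Parseval identity gives
$$\Var \; = \; \frac{1}{q^{2(n-h-1)}} \sum_{\chi \neq \chi_0} \bigg|\sum_\lambda \hat{a}_\lambda\, \Psi_\lambda(\chi)\bigg|^2 \; + \; o(q^{h+1}),$$
where $\Psi_\lambda(\chi) := \sum_{g \in \mathcal{M}_n} X^\lambda(g)\, \chi(g^*)$ and the sum is over the nontrivial even primitive characters modulo $T^{n-h}$.

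The heart of the proof is to evaluate $\Psi_\lambda(\chi)$ for primitive $\chi$. Using the squarefree Euler product $L_{\mathrm{sf}}(u,\chi) = L(u,\chi)/L(u^2,\chi^2)$, the Frobenius formula $p_\mu = \sum_\lambda X^\lambda(\mu)\, s_\lambda$, and the involution $\omega$ on symmetric functions that sends $e_n \leftrightarrow h_n$ (hence $s_\lambda \leftrightarrow s_{\lambda'}$), one derives
$$\Psi_\lambda(\chi) \; = \; (-1)^n q^{n/2}\, s_{\lambda'}(\Theta_\chi) \; + \; O(q^{(n-1)/2}),$$
where $\Theta_\chi \in U(n-h-2)$ is the unitarized Frobenius conjugacy class of $L(u,\chi)$; the conjugate partition $\lambda'$ appears because summing over squarefrees is governed by the elementary rather than complete symmetric side of the dictionary. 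By Katz's theorem the $\Theta_\chi$ equidistribute in $PU(n-h-2)$ as $q \to \infty$, so the average over characters converges to the matrix integral $\int_{U(n-h-2)} \big|\sum_\lambda \hat{a}_\lambda s_{\lambda'}(U)\big|^2\, dU$. Using the orthonormality relation $\int_{U(N)} s_{\lambda'}\,\overline{s_{\mu'}}\, dU = \delta_{\lambda\mu}\, \mathbf{1}[\ell(\lambda')\le N]$ with $N = n-h-2$ and $\ell(\lambda') = \lambda_1$, this collapses to $\sum_{\lambda_1 \le n-h-2} |\hat{a}_\lambda|^2$, giving the theorem.

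The main obstacle is the symmetric-function identification of $\Psi_\lambda(\chi)$ with $q^{n/2}\, s_{\lambda'}(\Theta_\chi)$ plus an $O(q^{(n-1)/2})$ error in the third paragraph. This requires carefully tracking the involution $\omega$ through the squarefree Euler product and confirming that the $L(u^2,\chi^2)$ denominator contributes only lower-order corrections; the conjugation $\lambda \mapsto \lambda'$ is precisely what converts $\ell(\lambda) \le n-h-2$ into the stated condition $\lambda_1 \le n-h-2$. The remaining ingredients — the squareful bound, cross-character cancellation, and invoking Katz's theorem on this family — are essentially bookkeeping by comparison.
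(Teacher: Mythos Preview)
Your plan is essentially the paper's own proof: the decomposition \eqref{eq:factfourier}, the Keating--Rudnick involution to pass to even characters modulo $T^{n-h}$, the identification $\sum_{g} X^{\lambda}(g)\chi(g) = (-1)^n q^{n/2} s_{\lambda'}(\Theta_\chi) + O(q^{(n-1)/2})$ via the explicit formula and the Frobenius identity (this is exactly the paper's Theorem \ref{thm:schur_in_zeros}), and then Katz plus Schur orthogonality. The $\lambda\mapsto\lambda'$ mechanism and the resulting condition $\lambda_1\le n-h-2$ are handled just as in the paper.

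The one substantive deviation is your treatment of the squareful piece $b$. The paper does \emph{not} use a direct CRT/second-moment argument; it bounds $\sum_f b(f)\chi(f)$ via the Riemann hypothesis (Lemma \ref{lem:squareful_bound}), getting $O(q^{n/2-1/2})$ for $\chi^2\neq\chi_0$, and then separately controls the few real characters (Lemma \ref{lem:realchar_bound}). Your ``joint squarefulness with probability $O(1/q^2)$'' sketch is plausible for the second moment but does not obviously yield variance $O(q^h)$: subtracting the squared mean leaves you needing covariance estimates of the form $\mathrm{Cov}(\mathbf{1}_{g\ \mathrm{squareful}},\mathbf{1}_{g+\alpha\ \mathrm{squareful}})=O(q^{-?})$ with a quantified error, which is more than CRT alone gives. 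The RH-based route is both cleaner and what the paper actually does; it also dovetails with the Schur-function step, since the same issue (whether $\chi^2=\chi_0$) arises when controlling the $L(u^2,\chi^2)$ correction you mention.

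Two smaller technical points the paper treats that your outline skips: the passage between sums over $\mathcal{M}_n$ and sums over $\mathcal{M}_n^\natural=\{f:(f,T)=1\}$ (Lemma \ref{lem:variance_approx}), and the removal of imprimitive even characters from the Parseval sum (handled in Step~4 of the paper via Lemma \ref{lem:squareful_free_bound}). Both are routine bookkeeping once the framework is set up, but they account for part of the $O(q^{h+1/2})$ error and should be mentioned.
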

Here the coefficients $\hat{a}_\lambda$ are defined by the expansion \eqref{eq:factfourier}, and the sum in \eqref{eq:general_var} is over all partitions $(\lambda_1,\lambda_2,...)$ of $n$ such that $\lambda_1$ (and therefore every $\lambda_i$) is no more than $n-h-2$.

In \eqref{eq:general_var}, the implied constant of the error term depends on $h$, $n$ and the factorization function itself, so the result is only of interest as $q\rightarrow\infty$. 

In section \ref{sec:fact_exp} we compute the coefficients in the expansion \eqref{eq:factfourier} for the factorization functions $\mu(f)$, $\Lambda(f)$, $\Lambda_j(f)$ and $d_k(f)$. These expansions, applied in Theorem \ref{thm:general_var} are sufficient to recover estimates for the variance of sums of these arithmetic functions over short intervals which we have cited in Theorems \ref{thm:a_mobiusvar}, \ref{thm:b_mangoldtvar}, and \ref{thm:f_dvar}.


It has been observed that in each of these Theorems from section \ref{sec:1}, the main term of the variance works out to be an integer, which suggests perhaps that in computing the variance a certain object is being counted. One simple consequence of Theorem \ref{thm:general_var} is that this is not the case in general; it is easy to find factorization function $a: \mathcal{M} \rightarrow \mathbb{Z}$ for which $\hat{a}_\lambda$ is not always an integer and for which the variance is non-integer. An example is furnished by the arithmetic functions $\omega(f)$, counting the number of prime factors of $f$, and likewise the function $\mu(f)\omega(f)$. We consider the short interval variance of these functions in section \ref{sec:fact_exp} by using Theorem \ref{thm:general_var}; this leads us to make a conjecture in the setting of the integers which seems perhaps somewhat surprising.



Note also that Theorem \ref{thm:general_var} gives us a non-trivial upper bound for the variance of arithmetic functions supported on the squarefrees, though the upper bound is one which may be far from optimal. Work of Keating and Rudnick \cite{KeRu2} and Roditty-Gershon \cite{RoGe} considers some related questions about the squarefrees (and indeed square-fulls) more carefully to get asymptotics, not only upper bounds.

The variance evaluation in Theorem \ref{thm:general_var} comes in part from a combinatorial analysis of random matrix integrals. In particular the already mentioned function field equidistribution theorem of Katz plays an important role in the proof. 

A likewise central role is played by a combinatorial analogue of the explicit formula of Weil, relating the zeros of an $L$-function to certain arithmetic functions. In particular, in section \ref{sec:schur} and especially Theorem \ref{thm:schur_in_zeros} we show that Schur functions of zeros of $L$-functions are closely related to the characters $X^\lambda(f)$ defined above. 

We note the conjectural appearance of the symmetric group in other closely related contexts, for example in Dehaye's work on moments of the Riemann zeta function \cite{De}. It would be of interest to pursue this connection further.

\subsection{}
The same result may be stated perhaps more strikingly along the lines advertised in section \ref{sec:1}. Let $\mathcal{F}_n$ be the linear space of factorization functions supported on $\mathcal{M}_n$, and define $\mathcal{U}_n^h$ to be the subspace of factorization functions for which variance is negligible; that is,
\begin{equation}
\label{eq:udef}
\mathcal{U}_n^h:= \Big\{ u \in \mathcal{F}_n:\; \lim_{q\rightarrow\infty} \Var_{f\in\mathcal{M}_n}\Big( \sum_{g \in I(f;h)} u(g) \Big) = o(q^{h+1})\Big\}.
\end{equation}
We may endow $\mathcal{F}_n$ with an inner product: for $a_1, a_2 \in \mathcal{F}_n$, we define
\begin{equation}
\label{eq:innerdef}
\langle a_1, a_2 \rangle := \lim_{q\rightarrow\infty} \frac{1}{q^n} \sum_{f\in \mathcal{M}_n} a_1(f) \overline{a_2(f)}.
\end{equation}
This inner product is degenerate, but only on factorization functions supported on the squarefuls. If we decompose $\mathcal{F}_n = \mathcal{G}_n\oplus\mathcal{B}_n$, where $\mathcal{G}_n$ is the space of factorization functions supported on squarefree monic polynomials of degree $n$, and $\mathcal{B}_n$ is the space supported on squarefuls, then the equidistribution of factorization types imply that this is a proper inner product when restricted to $\mathcal{G}_n$. 

We will show that $\mathcal{B}_n \subseteq \mathcal{U}_n^h$, and so if we define $\mathcal{V}_n^h$ to be the orthogonal complement to $\mathcal{U}_n^h$ inside $\mathcal{G}_n$, we have
$$
\mathcal{F}_n = \mathcal{U}_n^h \oplus \mathcal{V}_n^h.
$$

We will observe the following restatement of Theorem \ref{thm:general_var},

\begin{thm}
	\label{thm:subspace_decompose1}
	Let $0 \leq h \leq n-5$ be fixed and $v$ be a fixed factorization function from the subspace $\mathcal{V}_n^h$. Then
	$$
	\Var_{f\in\mathcal{M}_n}\bigg( \sum_{g \in I(f;h)} v(g)\bigg) = q^{h+1}\langle v, v \rangle + O(q^{h+1/2}).
	$$
\end{thm}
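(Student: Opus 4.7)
The plan is to identify $\mathcal{V}_n^h$ explicitly as the span of the irreducible characters $X^\lambda$ with $\lambda_1 \leq n-h-2$; once this identification is in hand, Theorem \ref{thm:subspace_decompose1} is a direct translation of Theorem \ref{thm:general_var} into coordinate-free language.

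First I would verify that the family $\{X^\lambda\}_{\lambda\vdash n}$ is an orthonormal basis of $\mathcal{G}_n$ with respect to the inner product \eqref{eq:innerdef}. For any $\lambda_1,\lambda_2\vdash n$, the product $X^{\lambda_1}(f)\overline{X^{\lambda_2}(f)}$ is a factorization function supported on squarefrees and depending only on the factorization type $\tau_f$, so Proposition \ref{prop:factor_to_cycle} converts its limiting average over $\mathcal{M}_n$ into an average over $\mathfrak{S}_n$. Using the reality of symmetric-group characters together with the orthogonality relation \eqref{eq:ortho_sym}, this gives $\langle X^{\lambda_1},X^{\lambda_2}\rangle = \delta_{\lambda_1=\lambda_2}$.

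Next I would use Theorem \ref{thm:general_var} itself to identify $\mathcal{U}_n^h$. Writing $a\in\mathcal{F}_n$ in the canonical form \eqref{eq:factfourier}, Theorem \ref{thm:general_var} says the short-interval variance equals $q^{h+1}\sum_{\lambda_1\leq n-h-2}|\hat{a}_\lambda|^2 + O(q^{h+1/2})$, and since the displayed sum is a fixed nonnegative number independent of $q$, this is $o(q^{h+1})$ precisely when $\hat{a}_\lambda = 0$ for every $\lambda$ with $\lambda_1 \leq n-h-2$. Combined with the inclusion $\mathcal{B}_n \subseteq \mathcal{U}_n^h$, established elsewhere in the paper, this yields
$$
\mathcal{U}_n^h = \spann\{X^\lambda : \lambda \vdash n,\; \lambda_1 > n-h-2\} \oplus \mathcal{B}_n.
$$
Since $w(f)b(f)\equiv 0$ whenever $w\in\mathcal{G}_n$ and $b\in\mathcal{B}_n$ (their supports are disjoint), the $\mathcal{B}_n$ summand lies in the radical of the inner product; taking orthogonal complements inside $\mathcal{G}_n$ with respect to the orthonormal basis above then gives
$$
\mathcal{V}_n^h = \spann\{X^\lambda : \lambda \vdash n,\; \lambda_1 \leq n-h-2\}.
$$

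Finally, for $v = \sum_{\lambda_1\leq n-h-2}\hat{v}_\lambda X^\lambda \in \mathcal{V}_n^h$, Parseval in this orthonormal basis gives $\langle v, v\rangle = \sum_{\lambda_1 \leq n-h-2}|\hat{v}_\lambda|^2$, and substituting into Theorem \ref{thm:general_var} produces exactly $q^{h+1}\langle v, v\rangle + O(q^{h+1/2})$. The genuine work is thereby concentrated in Theorem \ref{thm:general_var} together with the lemma $\mathcal{B}_n \subseteq \mathcal{U}_n^h$; once those are granted, the present statement is a bookkeeping exercise in symmetric-group character theory, and I do not anticipate any serious new obstacle in writing it out.
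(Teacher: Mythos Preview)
Your proposal is correct and follows essentially the same route as the paper: you first establish the explicit description of $\mathcal{V}_n^h$ (this is Proposition \ref{prop:subspace_decompose2}, proved there by exactly the argument you outline, using Theorem \ref{thm:general_var} to pin down $\mathcal{U}_n^h$ and then taking orthogonal complements via the orthonormality of the $X^\lambda$), and then read off the variance formula from Theorem \ref{thm:general_var} together with Parseval. One small remark: the inclusion $\mathcal{B}_n \subseteq \mathcal{U}_n^h$ that you cite as ``established elsewhere'' is in fact immediate from Theorem \ref{thm:general_var} itself, since any $b\in\mathcal{B}_n$ has all $\hat{b}_\lambda = 0$ in the expansion \eqref{eq:factfourier}.
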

That is, for $\mathcal{V}_n^h$, only diagonal terms contribute to the variance, while by definition for $\mathcal{U}_n^h$ the variance is of lower order. This implies an estimate for the variance of an arbitrary factorization function $a \in \mathcal{F}_n$, since there is a unique decomposition $a = u+v$ with $u \in \mathcal{U}_n^h$ and $v\in\mathcal{V}_n^h.$ Indeed,
$$
\Var_{f\in\mathcal{M}_n}\bigg( \sum_{g\in I(f;h)} u(g)\bigg) = o(q^{h+1}),
$$
so (using Cauchy-Schwarz to bound covariance),
\begin{equation}
\label{eq:var_project}
\Var_{f\in\mathcal{M}_n}\bigg( \sum_{g \in I(f;h)} a(g)\bigg) = q^{h+1}\langle v, v \rangle + o(q^{h+1}).
\end{equation}

The spaces $\mathcal{U}_n^h$ and $\mathcal{V}_n^h$ can be characterized explicitly.
\begin{prop}
	\label{prop:subspace_decompose2}
	We have $$\mathcal{U}_n^h = \mathcal{A}_n^h \oplus \mathcal{B}_n,$$ where
	\begin{align*}
	\mathcal{A}_n^h&:= \mathrm{span}\{ X^\lambda(f)\; : \; \lambda \vdash n,\, \lambda_1 \geq n -h -1\},\\
	\mathcal{B}_n &:= \{b(f):\; b\in \mathcal{F}_n \textrm{ is supported on squareful elements}\}.
	\end{align*}
	Furthermore
	$$
	\mathcal{V}_n^h = \mathrm{span}\{ X^\lambda(f)\; : \; \lambda \vdash n,\, \lambda_1 \leq n -h -2\}.
	$$
\end{prop}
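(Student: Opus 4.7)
The plan is to derive both identities directly from Theorem \ref{thm:general_var}, read through the basis of characters $\{X^\lambda\}_{\lambda \vdash n}$. First I would verify that these characters are orthonormal in $\mathcal{G}_n$ under $\langle\cdot,\cdot\rangle$: Proposition \ref{prop:factor_to_cycle} together with the squarefree-density estimate \eqref{eq:squarefree_density} identifies $\langle X^{\lambda}, X^{\mu}\rangle$ with $\mathbb{E}_{\sigma \in \mathfrak{S}_n} X^{\lambda}(\sigma) X^{\mu}(\sigma)$, which equals $\delta_{\lambda = \mu}$ by \eqref{eq:ortho_sym}. Since $X^{\lambda}$ is zero on squarefuls by construction, $\mathcal{B}_n$ is the radical of $\langle\cdot,\cdot\rangle$ inside $\mathcal{F}_n$, and the unique expansion \eqref{eq:factfourier} realizes the direct-sum decomposition $\mathcal{F}_n = \mathcal{G}_n \oplus \mathcal{B}_n$.

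Next I would establish the inclusion $\mathcal{A}_n^h \oplus \mathcal{B}_n \subseteq \mathcal{U}_n^h$. For any $b \in \mathcal{B}_n$ the coefficients $\hat{b}_{\lambda}$ all vanish, so Theorem \ref{thm:general_var} gives variance $O(q^{h+1/2}) = o(q^{h+1})$. For a basis element $X^{\lambda}$ of $\mathcal{A}_n^h$ (so $\lambda_1 \geq n-h-1$), the only nonzero Fourier coefficient is indexed by $\lambda$ itself, and this index is excluded from the restricted sum in \eqref{eq:general_var}, so again the variance is $O(q^{h+1/2})$. Linearity together with the triangle inequality $\sqrt{\Var(X+Y)} \leq \sqrt{\Var(X)} + \sqrt{\Var(Y)}$ extends the bound to arbitrary elements of $\mathcal{A}_n^h \oplus \mathcal{B}_n$.

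For the reverse inclusion I would take $a \in \mathcal{U}_n^h$, write $a = \sum_{\lambda} \hat{a}_{\lambda} X^{\lambda} + b$ via \eqref{eq:factfourier}, and apply Theorem \ref{thm:general_var} to obtain
$$
\Var_{f \in \mathcal{M}_n}\bigg(\sum_{g \in I(f;h)} a(g)\bigg) = q^{h+1} \sum_{\substack{\lambda \vdash n \\ \lambda_1 \leq n-h-2}} |\hat{a}_{\lambda}|^2 + O(q^{h+1/2}).
$$
Dividing by $q^{h+1}$ and letting $q \to \infty$ forces the left-hand side, and hence the sum of $|\hat{a}_{\lambda}|^2$, to zero, so $\hat{a}_{\lambda} = 0$ whenever $\lambda_1 \leq n-h-2$. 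Thus $a \in \mathcal{A}_n^h \oplus \mathcal{B}_n$, completing the first identity.

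The second identity is then pure linear algebra: intersecting $\mathcal{U}_n^h = \mathcal{A}_n^h \oplus \mathcal{B}_n$ with $\mathcal{G}_n$ gives $\mathcal{A}_n^h$, and its orthogonal complement inside $\mathcal{G}_n$ with respect to the orthonormal basis $\{X^{\lambda}\}_{\lambda \vdash n}$ is precisely $\mathrm{span}\{X^{\lambda} : \lambda_1 \leq n-h-2\}$. No serious obstacle is anticipated; the proposition is essentially a clean repackaging of Theorem \ref{thm:general_var} in subspace language, with the only non-bookkeeping ingredient being the orthonormality computation carried out in the first paragraph.
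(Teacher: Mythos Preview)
Your proposal is correct and follows essentially the same route as the paper: both arguments read off the characterization of $\mathcal{U}_n^h$ directly from Theorem~\ref{thm:general_var} applied to the expansion \eqref{eq:factfourier}, use the equidistribution of factorization types (Proposition~\ref{prop:factor_to_cycle}) together with \eqref{eq:ortho_sym} to establish orthonormality of the $X^\lambda$, and then identify $\mathcal{V}_n^h$ as the orthogonal complement of $\mathcal{A}_n^h$ inside $\mathcal{G}_n$. Your write-up is slightly more explicit (spelling out the triangle inequality for passing to linear combinations and the division-by-$q^{h+1}$ step), but there is no substantive difference in strategy.
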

This explicit decomposition is what connects Theorem \ref{thm:general_var} and Theorem \ref{thm:subspace_decompose1}. It is worthwhile to emphasize once again an interpretation of this result; the determination that the variance of short interval sums of functions lying in $\mathcal{U}_n^h$ is negligible will be a relatively simple fact to verify -- we will that functions lying in this space are forced to be regular across short intervals owing in the end to a paucity of zeros of $L$-functions. The theorem tells us that outside this first obstruction, factorization functions otherwise behave in an oscillatory fashion, akin to the M\"obius function, when summed in a short interval.

There is another appealing way to write this decomposition, based on a suggestion by J. Ellenberg:

\begin{prop}
	\label{prop:subspace_decomposedivisors}
	Define the space $\mathcal{U}_n^h$ as in the start of this subsection. Then $\mathcal{U}_n^h$ consists of functions $u(f)$ that can be written in the form
	\begin{equation}
	\label{divisor_decompose}
	u(f) = \sum_{\substack{\delta | f \\ \deg(\delta) \leq h+1}} \alpha(\delta) + b(f), \quad \textrm{for all}\; f\in \mathcal{M}_n,
	\end{equation}
	where $\alpha(\delta)$ is a factorization function, and $b(f)$ is a factorization function supported on the squarefuls.
\end{prop}

Here the sum is over all monic polynomials $\delta$ dividing $f$ with degree no more than $h+1$.

Indeed, it will again follow quite easily that for all factorization functions that can be represented as truncated divisor sums in this way, the value of their sums over short intervals will remain basically constant no matter the choice of short interval, so that these sums have negligible variance. The space $\mathcal{V}_n^h$ remains defined as the complement of $\mathcal{U}_n^h$, and so an interpretation of this decomposition remains the same -- outside an `easy-to-find' obstruction, functions otherwise behave in an oscillatory fashion when summed in a short interval.

As a corollary of Theorem \ref{thm:subspace_decompose1} and Proposition \ref{prop:subspace_decomposedivisors}, we have

\begin{cor}
\label{cor:variance_to_minimizer}
For $a(f)$ a fixed factorization function, and fixed $h$ and $n$ with $0 \leq h \leq n-5$,
\begin{equation}
\label{eq:covariance_to_minimizer}
\Var_{f\in\mathcal{M}_n}\bigg( \sum_{g \in I(f;h)} a(g)\bigg) = q^{h+1} \inf_{\alpha \in \mathcal{F}} \Big\| a(f) - \sum_{\substack{\delta | f \\ \deg(\delta) \leq h+1}} \alpha(\delta) \Big\|^2 + O(q^{h+1/2}),
\end{equation}
where $\mathcal{F}$ is the space of all factorization functions, and $\|\cdot\|$ is the norm induced by the inner product \eqref{eq:innerdef}.
\end{cor}

Rather curiously, the minimization problem arising in the computing the right hand side of \eqref{eq:covariance_to_minimizer} has some similarity to those which arise in connection to the Selberg sieve.

We turn to a proof of these decompositions and Theorem \ref{thm:subspace_decompose1} in section \ref{sec:10.1}.

\subsection{}
\label{sec:intro4.5}
Because Theorem \ref{thm:general_var} allows us to compute variances for general factorization functions, it is straightforward to also compute covariances using it. We record a general formula for covariance in section \ref{sec:covariance}, and draw out some interesting consequences that appear to be new in the literature.

\subsection{}
\label{sec:intro5}
A similar set of results could be developed for factorization functions in arithmetic progressions rather than short intervals, though we don't do so here.

\subsection{}
\label{sec:intro6}
In the next two sections we recall some background material regarding Dirichlet characters, L-functions, and symmetric function theory. We turn to the substantial portion of the proof of Theorem \ref{thm:general_var} in section \ref{sec:short_interval}.

\section{Background on Dirichlet characters and zeros of L-functions}
\label{sec:dirichlet}
\subsection{}
\label{sec:dirichlet1}
We recall a few of the basic facts about Dirichlet characters defined over $\mathbb{F}_q[T]$ that we will use. Our notation is the same as that from \cite{Ros,KeRu1,Ro,Ru,KeRoRoRu} and a reader familiar with the facts from any one of those may skip this section and refer back to it as it is referenced.


In $\mathbb{F}_q[T]$, we will make use of the family of primitive even characters modulo the element $T^M$ for powers $M\geq 1$. We call a character $\chi$ \emph{even} if for all $c \in \mathbb{F}_q$ and all $f\in\mathbb{F}_q[T]$, we have $\chi(cf) = \chi(f)$. Recall that the number of  Dirichlet characters modulo $T^M$ is
\begin{equation}
	\label{eq:phi_eval}
	\Phi(T^M) = q^M(1-1/q),
\end{equation}
the number of primitive Dirichlet characters is
\begin{equation}
	\label{eq:phiprim_eval}
	\Phi_{prim}(T^M) = q^M(1-1/q)^2,
\end{equation}
the number of even Dirichlet characters is
\begin{equation}
	\label{eq:phiev_eval}
	\Phi^{ev}(T^M) = q^{M-1},
\end{equation}
and the number of even primitive characters is 
\begin{equation}
	\label{eq:phievprim_eval}
	\Phi_{prim}^{ev}(T^M) = q^{M-1}(1-1/q).
\end{equation}

We recall that the L-function of a Dirichlet character $\chi$ is defined for $|u| < 1/q$ by
$$
\mathcal{L}(u,\chi):= \sum_{f\, \mathrm{monic}} \chi(f)u^{\deg(f)} = \prod_{\substack{P \; \mathrm{monic,}\\\mathrm{irred.}}}\frac{1}{1-\chi(P) u^{\deg(P)}},
$$
and that for $\chi$ non-trivial that $\mathcal{L}(u,\chi)$ is a polynomial in $u$, defined for $|u| \geq 1/q$ by analytic continuation. The Riemann hypothesis, in this context a theorem of Weil \cite{We}, states that all roots of $\mathcal{L}(u,\chi)$ lie on the circles $|u| = q^{-1/2}$ or $|u|=1$. If $\chi$ is a non-trivial character modulo a polynomial $Q$ of degree $M$, then $\mathcal{L}(u,\chi)$ has no more than $M-1$ roots, and as a well known consequence of this and the Riemann hypothesis,
\begin{equation}
	\label{vonMangoldtBound}
	\sum_{f\in \mathcal{M}_n} \Lambda(f)\chi(f) = O_M(q^{n/2}).
\end{equation}

\subsection{}
\label{sec:dirichlet2}
In the case that $\chi$ is a primitive character we can succinctly say more. In this case for $\chi$ modulo $T^M$, the polynomial $\mathcal{L}(u,\chi)$ has exactly $M-1$ roots. Define the function $\lambda_\chi$ to be $1$ if $\chi$ is even, and $0$ otherwise. When $\chi$ is even, $\mathcal{L}(u,\chi)$ has a simple zero at $u=1$, otherwise all zeros of this polynomial lie on the circle $|u| = q^{-1/2}$. We can record this information in a single equation; we have for primitive characters $\chi$,
\begin{align}
	\label{frob}
	\notag \mathcal{L}(u,\chi) &= (1-\lambda_\chi u) \prod_{j=1}^N(1-q^{1/2} e^{i 2\pi \vartheta_j} u) \qquad \text{ for } N:= \deg Q - 1 - \lambda_\chi \\
	&= (1 -\lambda_\chi u) \det(1 - q^{1/2} u \,\Theta_\chi), 
\end{align}
where $e^{i2\pi\vartheta_1}, ..., e^{i2\pi \vartheta_N}$ lie the unit circle and are determined by the character $\chi$, and 
$$
\Theta_\chi := \textrm{diag}(e^{i2\pi\vartheta_1}, ..., e^{i2\pi \vartheta_N}).
$$
is known as the unitarized Frobenius matrix. From logarithmic differentiation we also have the \emph{explicit formula},
\begin{equation}
	\label{explicit}
	\sum_{f\in \mathcal{M}_n} \Lambda(f) \chi(f) = -q^{n/2}\, \Tr\, \Theta_\chi^n - \lambda_\chi.
\end{equation}

To control the distribution of zeros, a theorem of Katz's will be important for us, as it has been in all investigations of this sort since Keating and Rudnick's \cite{KeRu1}.
We let $PU(m)$ be the projective unitary group, the quotient of the unitary group $U(m)$ by unit modulus scalars, endowed with Haar measure, and $PU(m)^{\#}$ be the space of conjugacy classes of $PU(m)$, with inherited measure.

\begin{thm}[Katz \cite{Ka}]
	\label{katz}
	Fix $M\geq 5$. Over the family of even primitive characters $\chi \MOD T^{M})$, the conjugacy classes of the unitarized Frobenii $\Theta_\chi$ become equidistributed in $PU(M-2)^{\#}$ as $q\rightarrow\infty$.
\end{thm}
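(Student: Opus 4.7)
The plan is to invoke Deligne's equidistribution theorem from Weil~II, as developed for arithmetic families in Katz's book \emph{Gauss Sums, Kloosterman Sums, and Monodromy Groups}. The first step is to organize the family of even primitive Dirichlet characters $\chi \MOD T^{M})$ as the $\mathbb{F}_q$-rational points of a smooth affine scheme $V/\mathbb{F}_q$. Given the count $\Phi_{prim}^{ev}(T^M) = q^{M-1}(1-1/q)$ from \eqref{eq:phievprim_eval}, I would expect $V$ to be an open subvariety of affine $(M-1)$-space, with primitivity imposed as an open condition and evenness as a codimension-one closed condition. Next, I would construct a lisse $\ell$-adic sheaf $\mathcal{F}$ of rank $M-2$ on $V$ whose geometric Frobenius at a point $\chi \in V(\mathbb{F}_q)$ has reversed characteristic polynomial matching the nontrivial factor of \eqref{frob}; after Tate-twisting by $q^{1/2}$, the Riemann hypothesis (Weil's theorem) makes $\mathcal{F}$ pure of weight zero, so its monodromy lies naturally inside $U(M-2)$, and passing to the projectivization, inside $PU(M-2)$.

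The hardest step --- and the main obstacle --- is to identify the geometric monodromy group of $\mathcal{F}$ with the \emph{full} $PU(M-2)$. One must show that local inertia at the ramified points, particularly at $T=0$ where wild ramification is controlled by the conductor exponent $M$, generates a subgroup large enough to force the global monodromy to fill $PU(M-2)$. This requires Katz's technical arsenal: Swan conductor bookkeeping, Euler--Poincar\'e formulas to pin down the rank, and Goursat--Kolchin--Ribet style arguments to exclude imprimitive or reducible representations and to rule out containment in proper classical subgroups (symplectic, orthogonal, finite primitive, etc.). The hypothesis $M \geq 5$ presumably enters precisely to avoid the small-rank degeneracies where such exotic containments can occur, analogously to how the bound $h \leq n-5$ appears in Theorems \ref{thm:a_mobiusvar}--\ref{thm:f_dvar}.

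Once both the geometric and arithmetic monodromy groups are identified with $PU(M-2)$, Deligne's equidistribution theorem applies: as $q\to\infty$, the Frobenius conjugacy classes $\Theta_\chi$ for $\chi$ ranging over the $\mathbb{F}_q$-points of $V$ equidistribute in $PU(M-2)^{\#}$ against the pushforward of Haar measure, with explicit error terms of shape $O(q^{-1/2})$ controlled by the dimension of $V$ and the rank of $\mathcal{F}$ (both fixed once $M$ is fixed). This delivers the claimed equidistribution and completes the argument.
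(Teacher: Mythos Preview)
The paper does not contain a proof of this theorem. Theorem~\ref{katz} is quoted from Katz~\cite{Ka} and used as a black box; the paper explicitly remarks (just before the statement of Theorem~\ref{thm:subspace_decompose1} in section~\ref{sec:3}) that it \emph{relies} on Katz's result and does not give an independent proof. So there is no ``paper's own proof'' to compare against.

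Your sketch is a reasonable outline of the architecture behind Katz's actual argument in~\cite{Ka}: parametrize the family by a variety, build a lisse sheaf whose Frobenius traces recover the $L$-function, normalize to weight zero, compute the geometric monodromy group, and apply Deligne's equidistribution theorem. But as written it is a plan, not a proof. The decisive step---showing that the geometric monodromy group is the full $PU(M-2)$ rather than some proper subgroup---is where all the content lies, and you have only named the toolbox (Swan conductors, Goursat--Kolchin--Ribet, exclusion of classical subgroups) without carrying out any of it. The phrases ``I would expect'' and ``presumably enters'' signal exactly where the argument has not been done. If you want to actually prove the theorem, you would need to work through Katz's paper~\cite{Ka} in detail; for the purposes of this paper, the correct thing is simply to cite it.
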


More computationally the meaning of the theorem is as follows: for any continuous class function $\phi: U(M-2) \rightarrow \mathbb{C}$ such that $\phi(e^{i2\pi \theta} g) = \phi(g)$ for all unit scalars $e^{i2\pi \theta}$ and unitary matrices $g$, we have
$$
\lim_{q\rightarrow\infty}\mathbb{E}_{\substack{\chi\, (T^M) \\ \text{prim., ev.}}} \phi(\Theta_\chi) =  \int_{U(M-2)} \phi(g)\,dg,
$$
as $q\rightarrow\infty$, where for typographical reasons we have written
$$
\mathbb{E}_{\substack{\chi\, (T^M) \\ \text{prim., ev.}}} \phi(\Theta_\chi) := \frac{1}{\Phi_{prim}^{ev}(T^{M})} \sum_{\substack{\chi\, (T^{M}) \\ \text{prim., ev.}}} \phi(\Theta_\chi). 
$$

\subsection{}
\label{sec:dirichlet3}
The reason we will be interested in characters modulo $T^M$ is the following involution used by Keating and Rudnick. 

We let $\mathcal{P}_n$ be the collection of degree $n$ polynomials in $\mathbb{F}_q[T]$, and $\mathcal{P}_n^\natural := \{f \in \mathcal{P}_n:\; (f,T)=1\}$. Equivalently $\mathcal{P}_n^\natural$ is the collection of degree $n$ polynomials with a constant coefficient that is non-zero. Our involution is the mapping $f\mapsto f^\ast$ from $\mathcal{P}_n^\natural$ to itself defined by
\begin{equation}
	\label{eq:invol}
	(a_0 + a_1 T^1 + \cdots + a_n T^n)^\ast = a_n + a_{n-1} T + \cdots + a_0 T^n.
\end{equation}
It is straightforward to check that for $f$ with non-zero constant coefficient,
$$
(f^\ast)^\ast = f,
$$
and for $f,g$ with non-zero constant coefficient,
$$
(fg)^\ast = f^\ast g^\ast.
$$
If we extend the definition of factorization type to $\mathcal{P}_n$, so that for $f\in \mathcal{P}_n$ for that scalar $c\in \mathbb{F}_q$ such that $cf \in \mathcal{M}_n$, the factorization type of $f$ is defined to be the factorization type of $cf$, it follows that for $f\in \mathcal{P}_n^\natural$,
\begin{equation}
	\label{eq:fact_invol}
	\tau_f = \tau_{f^\ast}.
\end{equation}

This involution is useful for us because for $g_1,g_2 \in \mathcal{P}_n^\natural$, 
\begin{equation}
	\label{eq:distance_invol}
	\notag \deg(g_1-g_2) \leq h
\end{equation}
if any only if
\begin{equation}
	\label{eq:cong_invol}
	\notag g_1^\ast - g_2^\ast \equiv 0 \MOD T^{n-h}).
\end{equation}
This equivalence is easily checked. It is because of this that we may use Dirichlet characters and their L-functions to study short interval sums.

\section{Background on symmetric function theory}
\label{sec:sym}
\subsection{}
\label{sec:sym1}
We recall some notation and well known facts from symmetric function theory that we will use in what follows. A standard reference and introduction to the material we recall here is \cite[Ch. 7]{St}.

We have already defined partitions and discussed their basic notation in section \ref{sec:intro2}. One additional way to represent partitions is as a Young diagram. This is an array of left-justified boxes, with the number of boxes in each rowing weakly decreasing. For a partition $\lambda$, the Young diagram corresponding to $\lambda$ has $\lambda_1$ boxes in its first row, $\lambda_2$ boxes in its second row, etc. For instance, the Young diagram with shape $(5,3,3,1)$ is as follows:
$$
\ytableausetup{mathmode, boxsize=1.5em}
\ytableaushort{\none,\none,\none,\none}*{5,3,3,1}
$$

The \emph{dual partition} $\lambda'$ is defined to be $(\lambda_1',\lambda_2',...)$ where $\lambda_i'$ is the number of boxes in the $i$-th column of the Young diagram corresponding to $\lambda$. So in our example above, $(5,3,3,1)' = (4,3,3,1,1)$.

The \emph{length of a partition}, $\ell(\lambda)$ is defined to be $k$, where $\lambda = (\lambda_1,...,\lambda_k)$. So for instance $\ell(5,3,3,1) = 4$.

Young diagrams may be used to write down a relatively simple expression for characters of the symmetric group in the form of the famous Murnaghan-Nakayama rule. We quickly recall it here, taking from the presentation in \cite[Sec. 7.17]{St}, recommended for those who have not seen this result before. As a prerequisite, we define \emph{Young tableaus of shape $\lambda$} to be arrays of numbers, weakly increasing across rows and down columns, written in the squares of a Young diagram of $\lambda$. A \emph{border strip tableau of shape $\lambda$ and type $\tau$} is a Young tableau such that among the entries the number $i$ occurs exactly $\tau_i$ times, and for each $i$ the set of squares in which $i$ have been written form a \emph{border strip} -- that is, a connected collection of squares with no square upward and to the left of any others. The \emph{height} of a border strip is one less than the number of rows that contain it, and the height $h(T)$ of a tableau $T$ composed of border strips is the sum of the heights of the border strips.

\begin{thm}[Murnaghan-Nakayama rule]
\label{thm:murnaghan}
	For $\lambda$ a partition of $n$, and $\tau$ the type of a permutation from $\mathfrak{S}_n$
	\begin{equation}
	\label{eq:murnaghan}
	X^\lambda(\tau) = \sum_T (-1)^{h(T)},
	\end{equation}
	where the sum is over all border strip tableaus $T$ of shape $\lambda$ and type $\tau$.
\end{thm}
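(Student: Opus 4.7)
The plan is to prove the Murnaghan–Nakayama rule by translating the identity into symmetric functions via the Frobenius characteristic map, and then reducing it to a Pieri-type formula for multiplication of a Schur function by a power sum. First, I would recall Frobenius's theorem: the characteristic map $\mathrm{ch}: R(\mathfrak{S}_n) \to \Lambda^n$ sends the irreducible character $X^\lambda$ to the Schur function $s_\lambda$, and it sends the class indicator $z_\tau^{-1}\mathbf{1}[\sigma \sim \tau]$ to $p_\tau / z_\tau$, where $z_\tau = \prod_i i^{m_i} m_i!$. Combined with the orthogonality of characters, this yields the Frobenius formula
\begin{equation*}
p_\tau \;=\; \sum_{\lambda \vdash n} X^\lambda(\tau)\, s_\lambda.
\end{equation*}
Thus the Murnaghan–Nakayama rule \eqref{eq:murnaghan} is equivalent to showing that the coefficient of $s_\lambda$ in the power-sum product $p_{\tau_1} p_{\tau_2} \cdots p_{\tau_k}$ equals $\sum_T (-1)^{h(T)}$ over border strip tableaux.

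The key intermediate step is the \emph{Pieri rule for power sums}:
\begin{equation*}
p_r \cdot s_\mu \;=\; \sum_{\substack{\nu \supseteq \mu \\ \nu/\mu \text{ a border strip of size } r}} (-1)^{\mathrm{ht}(\nu/\mu)}\, s_\nu .
\end{equation*}
I would prove this from the Jacobi–Trudi formula $s_\mu = \det\bigl(h_{\mu_i - i + j}\bigr)_{1 \le i,j \le \ell}$ together with the Newton identity $p_r = \sum_{j=0}^{r-1} (-1)^j h_{r-j} e_j \cdot (\text{combinatorial adjustment})$; more directly, using the bialternant formula $s_\mu = a_{\mu+\delta}/a_\delta$ one computes $p_r(x) \cdot a_{\mu+\delta}(x) = \sum_i x_i^r \det(x_i^{\mu_j + \delta_j})$, and then one distributes the $x_i^r$ inside the determinant. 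The resulting rows either coincide with rows of another $a_{\nu+\delta}$ (giving $\pm s_\nu$) or produce a determinant with repeated rows (giving zero); careful accounting of which row-permutation is needed to reorder the increased row into strictly decreasing form reveals that nonzero contributions come precisely from the shapes $\nu$ whose skew $\nu/\mu$ is a border strip, with the sign corresponding exactly to the height statistic.

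Once the Pieri rule is in hand, the Murnaghan–Nakayama rule follows by induction on the number of parts of $\tau$. Starting from $s_\emptyset = 1$, I would successively multiply by $p_{\tau_1}, p_{\tau_2}, \ldots, p_{\tau_k}$. Each multiplication extends the current Young diagram by a border strip whose size is the corresponding cycle length, and contributes the sign $(-1)^{\mathrm{ht}}$. Hence after $k$ steps
\begin{equation*}
p_\tau \;=\; \sum_{\lambda} \Bigl( \sum_{T} (-1)^{h(T)} \Bigr) s_\lambda,
\end{equation*}
where the inner sum is precisely over border strip tableaux of shape $\lambda$ and type $\tau$. Comparing with the Frobenius formula and invoking the linear independence of Schur functions gives \eqref{eq:murnaghan}.

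The main obstacle is the Pieri-type identity for $p_r \cdot s_\mu$: the combinatorial content (border strips and their heights) is hidden inside the determinantal identity, and extracting it amounts to identifying which single-row insertion into $\mu + \delta$ produces a strict partition after the minimal number of row swaps. The sign bookkeeping from those swaps is what ultimately becomes $(-1)^{\mathrm{ht}(\nu/\mu)}$, and I expect this matching to require the most care.
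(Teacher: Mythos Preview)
The paper does not actually prove the Murnaghan--Nakayama rule; it is quoted as background from the literature (implicitly from Stanley's text \cite{St}), and the remark immediately following the statement even invites the reader to take \eqref{eq:murnaghan} as a \emph{definition} of the characters $X^\lambda$. Your proposal, by contrast, supplies a genuine proof, and the route you describe---deducing the rule from the Frobenius expansion $p_\tau = \sum_{\lambda} X^\lambda(\tau)\, s_\lambda$ together with the power-sum Pieri rule $p_r\, s_\mu = \sum_{\nu/\mu\ \text{border strip of size } r} (-1)^{\mathrm{ht}(\nu/\mu)} s_\nu$, the latter established via the bialternant formula $s_\mu = a_{\mu+\delta}/a_\delta$---is correct and is essentially the classical argument one finds in Macdonald \cite{Ma} or in \cite[\S7.17]{St}.

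One point of hygiene: you invoke the Frobenius formula as an input, and in this paper that formula is stated as Theorem~\ref{thm:frobenius} \emph{after} Murnaghan--Nakayama, with both cited from the same source. You should therefore make explicit that your proof of the Frobenius formula (via the characteristic map and the Hall inner product, say) does not itself rely on Murnaghan--Nakayama. The standard development does not, so there is no circularity, but it is worth flagging. A second small point: your sketch of the bialternant argument is accurate in spirit, but the claim that ``nonzero contributions come precisely from the shapes $\nu$ whose skew $\nu/\mu$ is a border strip'' is exactly the step that requires work---after adding $r$ to one exponent in $\mu+\delta$ and re-sorting, one must check both that distinctness of the exponents is equivalent to $\nu/\mu$ being connected with no $2\times 2$ block, and that the number of transpositions needed to re-sort equals $\mathrm{ht}(\nu/\mu)$. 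This is routine but should be written out carefully, as you anticipated.
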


\textbf{Remark:} A reader unfamiliar with characters of the symmetric group but nonetheless comfortable with the statement of the Murnaghan-Nakayama rule may take \eqref{eq:murnaghan} as their definition the symmetric group's characters.

\subsection{}
\label{sec:sym2}
We will need to work with symmetric polynomials in $m$ variables. Two bases for these polynomials that will be important for us are the power sum symmetric functions and Schur functions. Both bases are indexed by partitions. 

For \emph{power sum symmetric functions} in the variables $\omega_1,...,\omega_m$ we recall the definition that for an integer $n$,
$$
p_n = p_n(\omega_1,...,\omega_m) := \omega_1^n + \cdots + \omega_m^n,
$$
and for a partition $\lambda = (\lambda_1,...,\lambda_k)$, we define
$$
p_\lambda := p_{\lambda_1} \cdots p_{\lambda_k}.
$$
It is an elementary fact \cite[Corollary 7.7.2]{St} that any symmetric polynomial in the variables $\omega_1,..,\omega_m$ can be expressed uniquely as a linear combination of the functions $p_\lambda$.

\emph{Schur functions} in the variables $\omega_1,...,\omega_m$ have the following as their classical definition. For a partition $\lambda$ with $\ell(\lambda)\leq m$, set
$$
s_\lambda = s_\lambda(\omega_1,...\omega_m) := \frac{\det\Big(\omega_i^{\lambda_j+m-j}\Big)_{i,j=1}^m}{\det\Big(\omega_i^{n-j}\Big)_{i,j=1}^m}.
$$
If $\ell(\lambda) < m$, we extend $\lambda$ with $0$'s in the extra places so that the above definition still makes sense -- i.e. $\lambda = (\lambda_1,...,\lambda_k,0,...0)$. If $\ell(\lambda) > m$, we set $s_\lambda = 0.$

It is well known (though not completely obvious at first glance) that $s_\lambda$ defined as above is a symmetric polynomial with integer coefficients. As with power sums, any symmetric polynomial in the variables $\omega_1,...,\omega_m$ can be expressed uniquely as a linear combination of the functions $s_\lambda$. Proofs of these facts may be found in \cite[Ch. 7]{St}.

For these symmetric polynomials we have the following important identities:


\begin{thm}[Frobenius]
	\label{thm:frobenius}
	For $\lambda \vdash n$,
	\begin{align}
		\label{eq:frobenius}
		\notag s_\lambda &= \frac{1}{n!} \sum_{\sigma \in \mathfrak{S}_n} X^\lambda(\sigma)  \,p_\sigma \\
		&= \sum_{\nu\, \vdash n} \mathbf{p}(\nu) X^\lambda(\nu) p_\nu.
	\end{align}
	
	Likewise,
	\begin{equation}
		\label{eq:dual_frobenius}
		p_\nu = \sum_{\lambda \vdash n} X^\lambda(\nu) s_\lambda.
	\end{equation}
\end{thm}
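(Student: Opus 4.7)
The plan is to derive both identities as dual consequences of the Cauchy identity, with the characters of $\mathfrak{S}_n$ emerging from the expansion of Schur functions in power-sum symmetric functions.

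I would begin by expanding the Cauchy product $\prod_{i,j}(1-\omega_iy_j)^{-1}$ in two different ways. The classical Cauchy identity \cite[Thm.~7.12.1]{St} yields $\prod_{i,j}(1-\omega_iy_j)^{-1}=\sum_\lambda s_\lambda(\omega)s_\lambda(y)$. Taking logarithms and using $-\log(1-z)=\sum_{n\ge 1}z^n/n$, then expanding the exponential, gives
\[
\prod_{i,j}\frac{1}{1-\omega_iy_j}
= \exp\!\Big(\sum_{n\ge 1}\frac{p_n(\omega)p_n(y)}{n}\Big)
= \sum_\nu \mathbf{p}(\nu)\,p_\nu(\omega)\,p_\nu(y),
\]
where the final equality follows by grouping terms in the multinomial expansion according to the partition $\nu=\langle 1^{m_1}2^{m_2}\cdots\rangle$ that records how many copies of each factor $p_k(\omega)p_k(y)/k$ are multiplied together, producing exactly the coefficient $\mathbf{p}(\nu)=\prod_i(i^{m_i}m_i!)^{-1}$. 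Restricting to bi-degree $(n,n)$ and equating the two expressions yields the master identity
\[
\sum_{\lambda\vdash n} s_\lambda(\omega)s_\lambda(y)=\sum_{\nu\vdash n} \mathbf{p}(\nu)p_\nu(\omega)p_\nu(y).
\]

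Since the $p_\nu$ with $\nu\vdash n$ form a basis for degree-$n$ symmetric polynomials, I can define class functions $\psi^\lambda$ on $\mathfrak{S}_n$ by $s_\lambda=\sum_{\nu\vdash n}\mathbf{p}(\nu)\psi^\lambda(\nu)p_\nu$. Substituting this into the master identity and comparing coefficients of the linearly independent products $p_\nu(\omega)p_\mu(y)$ yields the orthogonality relation $\sum_\lambda\mathbf{p}(\mu)\psi^\lambda(\nu)\psi^\lambda(\mu)=\delta_{\nu,\mu}$. The dual identity $p_\nu=\sum_\lambda\psi^\lambda(\nu)s_\lambda$ then follows by matrix inversion: writing $A_{\lambda\nu}=\mathbf{p}(\nu)\psi^\lambda(\nu)$ for the $s$-to-$p$ change-of-basis matrix, this orthogonality is precisely the statement that the matrix with entries $B_{\nu\lambda}=\psi^\lambda(\nu)$ satisfies $BA=I$.

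The final and principal step is to identify $\psi^\lambda$ with the Murnaghan--Nakayama character $X^\lambda$ of \eqref{eq:murnaghan}. For this I would combine the Jacobi--Trudi identity $s_\lambda=\det(h_{\lambda_i-i+j})_{i,j=1}^{\ell(\lambda)}$ with the generating function $\sum_{n\ge 0}h_n t^n=\exp(\sum_{k\ge 1}p_k t^k/k)$ to write an explicit expression for the coefficient of $p_\nu$ in $s_\lambda$. Expanding the determinant as a signed sum over permutations and extracting this coefficient produces a signed count which, through the standard bijection with sequences of border-strip additions to a Young diagram, equals $\sum_T(-1)^{h(T)}$ over border-strip tableaux $T$ of shape $\lambda$ and type $\nu$. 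This matches the right-hand side of \eqref{eq:murnaghan}, yielding $\psi^\lambda=X^\lambda$. The main obstacle is carefully tracking signs in this combinatorial identification, from both the determinant expansion and the height statistic on border strips; a complete proof is standard and may be found in \cite[\S7.17]{St}, which I would reference rather than reproduce in detail.
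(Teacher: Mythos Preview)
Your proposal is correct and considerably more detailed than the paper's own treatment: the paper does not prove this theorem at all but simply cites it, stating that \eqref{eq:frobenius} is Theorem~7.17.3 of \cite{St} and \eqref{eq:dual_frobenius} is Corollary~7.17.4. Your outline via the Cauchy identity, the change-of-basis argument, and the Jacobi--Trudi/border-strip identification of the transition coefficients with $X^\lambda$ is essentially the standard development found in \cite[\S7.17]{St}, so in effect you have sketched the very proof the paper defers to.
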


\begin{proof}
	\eqref{eq:frobenius} is Theorem 7.17.3 of \cite{St}, while \eqref{eq:dual_frobenius} is Corollary 7.17.4.
\end{proof}

We can also express $s_\lambda$ in terms of the elementary symmetric functions, defined by 
$$
e_n = e_n(\omega_1,...,\omega_m) := \sum_{i_1 < \cdots < i_n} \omega_{i_1}\cdots \omega_{i_n},
$$
with the conventions $e_0 = 1$ and $e_n(\omega_1,...,\omega_m) = 0$ for $n > m$.

\begin{thm}[Jacobi-Trudi]
	\label{thm:jacobi_trudi}
	For $\lambda_1 \leq k$,
	$$
	s_\lambda = \det\big( e_{\lambda'_i -i + j} \big)_{i,j=1}^k.
	$$
\end{thm}

\begin{proof}
	This is a special case of Corollary 7.16.2 of \cite{St}.
\end{proof}

\textbf{Remark:} This is often known as the \emph{dual} Jacobi-Trudi identity because there is an equivalent formula in terms of the complete homogeneous symmetric functions; see \cite[Thm 7.16.1]{St}.

\subsection{}
\label{sec:sym3}
One of the many results that is derived in the literature from Theorem \ref{thm:frobenius} is an identity for characters of the symmetric group indexed by partitions that are dual to each other. We cite it here because we will use it later.
\begin{prop}
	\label{prop:dual_part}
	For $\sigma \in \mathfrak{S}_n$ and $\lambda \vdash n$,
	$$
	X^{\lambda'}(\sigma) = (-1)^{n-\ell(\sigma)} X^\lambda(\sigma).
	$$
\end{prop}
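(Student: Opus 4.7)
The plan is to derive this identity directly from the Murnaghan-Nakayama rule (Theorem \ref{thm:murnaghan}) by exhibiting a sign-tracking bijection between border strip tableaux of shape $\lambda$ and those of shape $\lambda'$. In what follows I interpret $\ell(\sigma)$ as $\ell(\tau_\sigma)$, the number of cycles of $\sigma$.

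The first step is the observation that reflecting a Young diagram across its main diagonal sends $\lambda$ to $\lambda'$ and, crucially, sends any border strip tableau $T$ of shape $\lambda$ and type $\tau=\tau_\sigma$ to a border strip tableau $T'$ of shape $\lambda'$ and the same type $\tau$. Both defining properties of a border strip -- connectedness and absence of a $2\times 2$ block -- are preserved under transposition (the forbidden $2\times 2$ configuration being self-transpose). This bijection matches the index sets of the two Murnaghan-Nakayama sums for $X^\lambda(\tau)$ and $X^{\lambda'}(\tau)$.

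The second step is to track how heights transform. A border strip consisting of $k$ cells and spanning $r$ rows and $c$ columns satisfies $r+c-1=k$, so transposition turns a border strip of height $r-1$ into one of height $c-1 = k-1-(r-1)$. Summing over the $\ell(\tau)$ border strips of $T$, whose cell-counts are $\tau_1, \ldots, \tau_{\ell(\tau)}$ adding to $n$, gives $h(T') = n - \ell(\tau) - h(T)$. Substituting into Murnaghan-Nakayama then yields
$$
X^{\lambda'}(\tau) = \sum_{T'} (-1)^{h(T')} = (-1)^{n-\ell(\tau)} \sum_T (-1)^{h(T)} = (-1)^{n-\ell(\sigma)} X^\lambda(\sigma),
$$
as desired. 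The main, and really only, obstacle is the routine combinatorial check that transposition takes border strips to border strips with the stated height relation, which follows immediately from the standard rim-hook characterization.
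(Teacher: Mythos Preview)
Your proof is correct. The transposition bijection between border strip tableaux of shapes $\lambda$ and $\lambda'$ is well-defined (the defining conditions --- weak increase along rows and columns, connectedness of each level set, and absence of $2\times 2$ blocks --- are all self-dual under transposition), and your height computation $h(T') = n - \ell(\tau) - h(T)$ follows cleanly from the ribbon identity $r+c-1=k$.

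The paper's own ``proof'' is simply a citation to Macdonald (Example~2 of Section~I.7), so you are supplying strictly more than the paper does. The argument Macdonald gives there is different in flavor: it uses the involution $\omega$ on the ring of symmetric functions, characterized by $\omega(p_r) = (-1)^{r-1} p_r$ and $\omega(s_\lambda) = s_{\lambda'}$, and then reads off the character identity from the Frobenius formula (Theorem~\ref{thm:frobenius}). Your approach via Murnaghan--Nakayama is more elementary and entirely self-contained given what the paper has already stated; Macdonald's approach is shorter once the machinery of $\omega$ is in place but requires that additional input. Either is perfectly acceptable here.
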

\noindent Here $\ell(\sigma) := \ell(\tau_\sigma)$.

\begin{proof}
	This is example 2 of section I.7 in \cite{Ma}. 
\end{proof}

\subsection{}
\label{sec:sym4}
One of the reasons we are interested in Schur functions is their appearance in random matrix theory. It is well known that they satisfy the following orthogonality relation.

\begin{thm}
	\label{thm:schur_ortho}
	For partitions $\lambda, \nu$,
	$$
	\int_{U(m)} s_\lambda(g) \overline{s_\nu(g)}\, dg = \delta_{\lambda \nu} \cdot \delta_{\ell(\lambda), \ell(\nu) \leq m}.
	$$
	Moreover, if $\lambda$ and $\nu$ are partitions of the same number (that is  $|\lambda| =  |\nu|$)
	$$
	\int_{PU(m)} s_\lambda(g) \overline{s_\nu(g)}\, dg = \delta_{\lambda\nu} \cdot \delta_{\ell(\lambda), \ell(\nu) \leq m}.
	$$
\end{thm}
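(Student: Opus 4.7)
My plan is to deduce both orthogonality statements from the classical fact that Schur polynomials realize the irreducible polynomial characters of the unitary group, and then pass from $U(m)$ to $PU(m)$ via a homogeneity argument.

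For the first identity, the key input is Weyl's character formula: for each partition $\lambda$ with $\ell(\lambda) \leq m$, the rational expression $s_\lambda(\omega_1,\ldots,\omega_m)$ defined above is exactly the character of the irreducible polynomial representation of $U(m)$ of highest weight $\lambda$, evaluated at a unitary matrix with eigenvalues $\omega_1, \ldots, \omega_m$. This can be seen by direct comparison: the determinant in the numerator of $s_\lambda$ is the Weyl numerator, and the Vandermonde denominator is the Weyl denominator for $U(m)$. Distinct partitions of length $\leq m$ label distinct irreducible representations, so Schur orthogonality of irreducible characters on the compact group $U(m)$ gives
$$
\int_{U(m)} s_\lambda(g)\, \overline{s_\nu(g)}\, dg = \delta_{\lambda\nu}
$$
whenever $\ell(\lambda), \ell(\nu) \leq m$. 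If either length exceeds $m$, then the corresponding Schur polynomial vanishes identically by definition, so the integrand is zero and both sides still agree. This handles the first assertion.

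For the $PU(m)$ integral, I would exploit that $s_\lambda$ is homogeneous of degree $|\lambda|$ in its arguments, hence
$$
s_\lambda(e^{i\theta}g)\, \overline{s_\nu(e^{i\theta}g)} = e^{i\theta(|\lambda|-|\nu|)}\, s_\lambda(g)\, \overline{s_\nu(g)}.
$$
When $|\lambda| = |\nu|$ the phase is trivial, so the integrand is invariant under multiplication by the central torus $\{e^{i\theta}I\}$ and descends to a well-defined function on $PU(m) = U(m)/\{e^{i\theta}I\}$. Since the Haar measure on $PU(m)$ is, by definition, the pushforward of Haar measure on $U(m)$ under the quotient map, the two integrals coincide and the second identity follows from the first. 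No part of this argument is particularly delicate; the only thing that needs to be invoked with care is the identification of $s_\lambda$ with an irreducible character of $U(m)$, which is the standard content of Weyl's formula (see e.g.\ \cite{FuHa,St}).
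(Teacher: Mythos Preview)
Your argument is correct and is precisely the standard representation-theoretic proof the paper has in mind: the paper does not supply its own argument but simply cites \cite{Ga} and ``more standard texts on representation theory,'' whose content is exactly the Weyl-character identification of $s_\lambda$ with irreducible $U(m)$-characters together with Schur orthogonality, followed by the homogeneity/descent step to $PU(m)$ that you spell out.
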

\noindent Here $s_\lambda(g), s_\nu(g)$ are Schur functions whose entries are the $m$ eigenvalues of the matrix $g$. A more or less self-contained proof may be found in \cite{Ga} as well as in more standard texts on representation theory.

\section{A basis for factorization functions, and a bound for character sums}

\subsection{}
\label{sec:short_interval1}
We turn in this section to a proof of Theorem \ref{thm:general_var}. Out strategy will be a familiar one, similar in its broad outlines to the original proof of Keating and Rudnick. By making use of the involution described in section \ref{sec:dirichlet}, we transfer a short interval sum to an average over sums of Dirichlet characters against factorization functions. These are in turn evaluated by using an equidistribution result of Katz and the combinatorial analysis of section \ref{sec:schur}. This combinatorial analysis is perhaps the most important observation of the paper. In terms of technique, some new issues arise that have not appeared in the past just because we work with factorization functions in general.

\subsection{}
\label{sec:short_interval2}
We begin by noting some ways to build factorization functions out of simpler functions. For two arithmetic functions $\phi_1$ and $\phi_2$ we define the convolution in the usual way,
$$
\phi_1\star \phi_2(f) := \sum_{\substack{f_1 f_2 = f \\ f_1, f_2 \in \mathcal{M}}} \phi_1(f) \phi_2(f).
$$
It is clear that if $\phi_1$ and $\phi_2$ are factorization functions, then $\phi_1\star \phi_2$ will be a factorization function as well.

For integers $m, e \geq 1$ we define the factorization function
$$
\iota_{m,e}(f) = \begin{cases} 1 & \textrm{if}\; f = P^e\; \textrm{with}\; \deg(P) = m \\
0 & \textrm{otherwise}
\end{cases}
$$
Thus $\iota_{m,e}$ is the indicator function of $e^\textrm{th}$ powers of $m^\textrm{th}$ degree primes, and is supported on $\mathcal{M}_{me}$. We generalize it in the following way: for an array $(\mathbf{m}, \mathbf{e}) = (m_1, e_1; m_2, e_2; ...; m_\ell, e_\ell)$ we define
\begin{equation}
\label{eq:indicator_def}
\iota_{(\mathbf{m},\mathbf{e})} = \iota_{m_1,e_1}\star \iota_{m_2,e_2} \star \cdots \star \iota_{m_\ell, e_\ell}.
\end{equation}

\begin{prop}
	\label{prop:alg_span}
	Any factorization function supported on $\mathcal{M}_n$ is a linear combination of the functions $\iota_{(\mathbf{m},\mathbf{e})}$. (Necessarily $m_1 e_1 + \cdots + m_\ell e_\ell = n$).
\end{prop}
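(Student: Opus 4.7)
My plan is to establish the proposition through a triangular inversion. As a preliminary reduction, note that by definition a factorization function supported on $\mathcal{M}_n$ is constant on each extended factorization type, so every such function is a linear combination of the indicator functions $\mathbf{1}_\tau$ --- equal to $1$ on polynomials of extended factorization type $\tau$ and $0$ elsewhere --- as $\tau$ ranges over the extended factorization types of $n$. It therefore suffices to write each $\mathbf{1}_\tau$ as a linear combination of the convolutions $\iota_{(\mathbf{m},\mathbf{e})}$ with $m_1 e_1 + \cdots + m_\ell e_\ell = n$.

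Next I would introduce a partial order $\preceq$ on extended factorization types of $n$, declaring that $\tau^{\ast} \preceq \tau$ whenever $\tau^{\ast}$ arises from $\tau$ by partitioning the multi-set of pairs $(m_i,e_i)$ of $\tau$ into groups with a common first coordinate $m$ and replacing each group by the single pair $(m, \sum_i e_i)$. The heart of the argument is to evaluate $\iota_\tau(f)$ when $f$ has extended factorization type $\tau^{\ast}$. Unwinding the convolution \eqref{eq:indicator_def} shows that $\iota_\tau(f)$ counts the ordered tuples $(Q_1,\ldots,Q_\ell)$ of monic primes with $\deg Q_i = m_i$ and $\prod_i Q_i^{e_i} = f$; by unique factorization in $\mathbb{F}_q[T]$, such tuples are in bijection with functions $j$ from the slot set $\{1,\ldots,\ell\}$ onto the set of distinct prime divisors of $f$, subject to the requirements that all slots $i$ mapped to a given prime $P$ have $m_i = \deg P$ and that their $e_i$'s sum to the multiplicity of $P$ in $f$. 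A short combinatorial check then shows that this count vanishes unless $\tau^{\ast} \preceq \tau$, and that when $\tau^{\ast} = \tau$ every fiber of $j$ is forced to have size one, yielding a diagonal value of $\prod_k \mu_k!$, where $\mu_k$ is the multiplicity of the $k$-th distinct pair appearing in $\tau$.

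Fixing any linear extension of $\preceq$ to a total order then produces a square matrix $M$ indexed by the extended factorization types of $n$, with $M_{\tau,\tau^{\ast}}$ equal to the value of $\iota_\tau$ on polynomials of type $\tau^{\ast}$. This matrix is triangular with strictly positive diagonal entries, hence invertible over $\mathbb{Q}$. Inverting the system expresses each $\mathbf{1}_\tau$ as a rational linear combination of the $\iota_{\tau^{\ast}}$ with $\tau^{\ast} \succeq \tau$, which together with the first-paragraph reduction proves the proposition. I expect the main obstacle to be the combinatorial bookkeeping in the middle step --- particularly keeping straight the slot indices $i$ parametrizing the convolution from the indices $j$ labelling the distinct primes of $f$ --- but once correctly set up this is a routine finite-dimensional linear algebra argument.
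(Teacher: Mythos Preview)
Your argument is correct. You set up the change-of-basis matrix between the indicators $\mathbf{1}_{\tau^\ast}$ of extended factorization types and the convolutions $\iota_\tau$, observe it is triangular with respect to the refinement order $\preceq$ (with diagonal entries $\prod_k \mu_k! \neq 0$), and invert. The combinatorial check that $\iota_\tau(f)$ vanishes unless the type of $f$ is obtained from $\tau$ by merging slots of a common degree is exactly right, and the diagonal count is as you say.

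The paper takes a related but structurally different route: it inducts on the length $L$ of the type, showing for each $L$ that the indicator $I_{(m_1,e_1;\ldots;m_{L+1},e_{L+1})}$ differs from a suitable multiple of $I_{(m_1,e_1;\ldots;m_L,e_L)}\star \iota_{m_{L+1},e_{L+1}}$ by a function supported on types of length $\leq L$, and then appeals to the inductive hypothesis. In effect the paper is recursively solving the same triangular system one row at a time, using the coarser grading by number of distinct prime factors, whereas you make the full partial order explicit and invert globally. Your approach is cleaner and makes the linear-algebraic structure transparent; the paper's approach is more constructive, in that it gives a recipe for peeling off one convolution factor at a time. Both arguments rest on the same underlying triangularity, so neither is deeper than the other.
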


\begin{proof} 
	Let $\mathcal{M}_{n,L}$ be the collection of elements of $\mathcal{M}_n$ with extended factorization type $(m_1, e_1; \cdots; m_\ell, e_\ell)$ for $\ell \leq L$. We suppose the Proposition is true for all factorization functions supported on $\mathcal{M}_{r,L}$ with $r\leq n$ and show that it is true for $\mathcal{M}_{n,L+1}$. Since it is obviously true (check) for $\mathcal{M}_{n,1}$ for all $n$, this will verify the claim by induction.
	
	We introduce indicator functions $I_{(\mathbf{m},\mathbf{e})}$ of the extended factorization type $(\mathbf{m},\mathbf{e})$; that is for $f\in\mathcal{M}$, we set $I_{(m_1,e_1;...;m_\ell,e_\ell)}(f) = 1$ if $f$ has extended factorization type $(m_1,e_1;...; m_\ell;e_\ell)$ and $I_{(m_1,e_1;...;m_\ell,e_\ell)}(f) = 0$ otherwise. Clearly
	$$
	\mathcal{M}_{n,L+1} = \mathrm{span}\{I_{(m_1,e_1;...;m_\ell,e_\ell)}:\; m_1 e_1 + \cdots + m_\ell e_\ell = n,\; \ell \leq L+1\},
	$$
	so to prove our claim we need only show that each 
	\begin{equation}
	\label{eq:I_labeled}
	I_{(m_1,e_1;\cdots; m_{L+1},e_{L+1})},
	\end{equation}
	is a linear combination of functions $\iota_{(\mathbf{m},\mathbf{e})}$. Suppose $\nu$ of the terms $(m_1,e_1), ..., (m_L,e_L)$ in \eqref{eq:I_labeled} are equal to $(m_{L+1}, e_{L+1})$. (We allow $\nu$ to be $0$.) By inspection of elements of $\mathcal{M}_{n,L+1}$ we see that
	\begin{equation}
	\label{eq:I_subtract}
	I_{(m_1,e_1;...;m_L,e_L)}\star \iota_{m_{L+1},e_{L+1}} - (\nu+1) I_{(m_1,e_1;...; m_{L+1},e_{L+1})}
	\end{equation}
	is supported on $\mathcal{M}_{n,L}$. By inductive hypothesis then \eqref{eq:I_subtract} is a linear combination of terms $\iota_{(\mathbf{m},\mathbf{e})}$. Likewise by inductive hypothesis, $I_{(m_1,e_1;...,m_L,e_L)}$ is a linear combination of such terms, so $I_{(m_1,e_1;...;m_L,e_L)}\star \iota_{m_{L+1},e_{L+1}}$ will be as well. Returning to \eqref{eq:I_subtract}, since $\nu + 1 \neq 0$, this shows that $I_{(m_1,e_1;...,m_{L+1},e_{L+1})}$ is therefore a linear combination of such terms, so that as claimed all factorization functions on $\mathcal{M}_{n,L+1}$ are linear combinations of such terms also.
\end{proof}

\subsection{}
\label{sec:short_interval2.5}
We have indicated that we must work with Dirichlet characters modulo $T^M$ for some power $M$. Note that for any non-trivial Dirichlet character $\chi$ modulo $T^M$, we have, by excluding powers of primes from the sum in the first line below and using the Riemann Hypothesis in the form \eqref{vonMangoldtBound} in the second,
\begin{align*}
	\sum_{f\in \mathcal{M}_n} \iota_{n,1}(f) \chi(f) &= \frac{1}{n} \sum_{f\in \mathcal{M}_n} \Lambda(f) \chi(f) + O(q^{n/2}) \\
	&= O_M(q^{n/2}).
\end{align*}
Thus for any $e \geq 2$, as long as $\chi^e \neq \chi_0$,
$$
\sum_{f\in\mathcal{M}_{me}} \iota_{m,e}(f) \chi(f) = \sum_{f\in \mathcal{M}_m} \iota_{m,1}(f) \chi^e(f) = O_M(q^{m/2}).
$$
For $e \geq 3$, trivially
$$
\sum_{f\in \mathcal{M}_{me}} \iota_{m,e}(f) \chi(f) = O(q^m).
$$
Note that for $m\geq 1, e\geq 2$, we have $m/2 \leq me/2-1/2$, and for $m\geq 1, e\geq 3,$ we have likewise $m\leq me/2-1/2$. Thus combining the two estimate above, we see that unless $\chi^2 = \chi_0$, we have
$$
\sum_{f\in \mathcal{M}_{me}} \iota_{m,e}(f) \chi(f) = O(q^{me/2-1/2}).
$$
Hence recalling the definition \eqref{eq:indicator_def}, unless $\chi^2 = \chi_0$, if any $e_i \geq 2$,
\begin{equation}
	\label{eq:squarefulind_bound}
	\sum_{f\in\mathcal{M}_n} \iota_{\mathbf{m},\mathbf{e}}(f) \chi(f) = O_{M,n}(q^{n/2-1/2}),
\end{equation}
where $n = m_1 e_1 + \cdots + m_k e_k.$

We have thus obtained
\begin{lem}
	\label{lem:squareful_bound}
	If $b$ is a fixed factorization function supported on the squarefuls, for $\chi$ a Dirichlet character modulo $T^M$, as long as $\chi^2 \neq \chi_0$,
	$$
	\sum_{f \in \mathcal{M}_n} b(f) \chi(f) = O_{M,n}(q^{n/2-1/2}).
	$$
\end{lem}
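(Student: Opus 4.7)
The plan is to reduce the lemma to the bound \eqref{eq:squarefulind_bound} applied to the basis functions $\iota_{(\mathbf{m},\mathbf{e})}$ with at least one exponent $e_i\geq 2$. The obstruction is that the $\iota$-basis of Proposition \ref{prop:alg_span} is not cleanly aligned with the squarefree/squareful splitting (for instance $\iota_{1,1}\star\iota_{1,1}$ picks up mass on $P^2$), so simply expanding $b$ in $\iota$'s and hoping only squareful $\iota$'s appear does not work directly. I therefore first need to refine Proposition \ref{prop:alg_span}.

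\textbf{Step 1 (refinement of Proposition \ref{prop:alg_span}).} I will show that whenever the extended type $(\mathbf{m},\mathbf{e})$ has some $e_i\geq 2$, the indicator $I_{(\mathbf{m},\mathbf{e})}$ lies in the span of those $\iota_{(\mathbf{m}',\mathbf{e}')}$ for which at least one $e'_j\geq 2$. The proof is a modification of the inductive argument for Proposition \ref{prop:alg_span}: given a squareful type with $L+1$ parts, choose the indexing so that $e_{L+1}\geq 2$ and apply identity \eqref{eq:I_subtract}. The convolution $I_{(m_1,e_1;\ldots;m_L,e_L)}\star\iota_{m_{L+1},e_{L+1}}$ expands, via Proposition \ref{prop:alg_span} applied to the first factor, into a finite combination of $\iota$'s each of which contains $\iota_{m_{L+1},e_{L+1}}$ as a factor and is therefore squareful in the required sense. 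The subtracted term is an $I$-function on $\mathcal M_{n,L}$ whose extended type is obtained by merging two factors; since the merged exponent is $\geq e_{L+1}+1\geq 3$, this $I$ is still squareful and has strictly fewer parts, so the inductive hypothesis applies.

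\textbf{Step 2 (expansion of $b$).} Expand $b$ in the indicator basis, $b = \sum_{(\mathbf{m},\mathbf{e})} b_{(\mathbf{m},\mathbf{e})}\,I_{(\mathbf{m},\mathbf{e})}$. Since $b$ vanishes on every squarefree polynomial and each $I_{(\mathbf{m},\mathbf{e})}$ with all $e_i=1$ is supported on the squarefrees, only squareful types contribute. Applying Step 1 to each such $I_{(\mathbf{m},\mathbf{e})}$, one obtains a finite expression
\[
b(f) = \sum_j c_j\,\iota_{(\mathbf{m}_j,\mathbf{e}_j)}(f),
\]
where in every $(\mathbf{m}_j,\mathbf{e}_j)$ at least one of the exponents is $\geq 2$, and the coefficients $c_j$ depend only on $b$ and $n$.

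\textbf{Step 3 (apply the character bound).} Multiply by $\chi(f)$ and sum over $f\in\mathcal{M}_n$. Each term $\sum_{f\in\mathcal M_n}\iota_{(\mathbf{m}_j,\mathbf{e}_j)}(f)\,\chi(f)$ is $O_{M,n}(q^{n/2-1/2})$ by \eqref{eq:squarefulind_bound}, provided $\chi^2\neq\chi_0$. Since the number of nonzero coefficients $c_j$ is bounded in terms of $n$ and $b$, summing yields the claimed bound. The only nontrivial ingredient is Step 1, and once that refinement is in hand the rest is just bookkeeping.
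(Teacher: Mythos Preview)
Your proof is correct and follows exactly the route the paper takes: the paper's one-line argument simply asserts that such a $b$ is a linear combination of $\iota_{(\mathbf{m},\mathbf{e})}$ with some $e_i\geq 2$ and then invokes \eqref{eq:squarefulind_bound}, and your Steps 1--3 supply the verification the paper omits. One remark: your worry that squarefree $\iota$'s might appear is unfounded and Step~1 can be replaced by a one-line observation---every $\iota_{(\mathbf{m},\mathbf{e})}$ with some $e_i\geq 2$ vanishes on squarefree $f$ (since $P^{e_i}\mid f$), while the functions $\iota_{(\mathbf{m},\mathbf{1})}$ restricted to the squarefrees are positive multiples of the indicators of factorization types and hence linearly independent there; restricting any $\iota$-expansion of $b$ to the squarefrees therefore kills all squarefree-$\iota$ coefficients at once.
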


\begin{proof}
	For such $b$, the function $b(f)\mathbf{1}_{\mathcal{M}_n}(f)$ is necessarily a linear combination of function $\iota_{\mathbf{m},\mathbf{e}}$, with in each case some $e_i \geq 2$.
\end{proof}

In the case that $\chi^2 = \chi_0$, we may genuinely have a worse bound; but it is easy to see in the same way that as long as $\chi \neq \chi_0$ for $\chi \MOD T^M)$, the bound in Lemma \ref{lem:squareful_bound} may be replaced by $O_{M,n}(q^{n/2})$. Indeed, for such an estimate, it is easy to see that we have no need that our factorization function be supported on the squarefuls as it was in Lemma \ref{lem:squareful_bound}:
\begin{lem}
	\label{lem:squareful_free_bound}
	If $a$ is a fixed factorization function, for $\chi$ a non-trivial Dirichlet character modulo $T^M$,
	$$
	\sum_{f\in\mathcal{M}_n} a(f) \chi(f) = O_{n,M}(q^{n/2}).
	$$
\end{lem}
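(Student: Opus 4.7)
The plan is to reduce the general factorization function $a$ to a bounded (in $n$) linear combination of the convolution building blocks $\iota_{(\mathbf{m},\mathbf{e})}$, exactly as in the proof of Lemma \ref{lem:squareful_bound}, and then exploit the complete multiplicativity of $\chi$ to estimate each convolution term factor by factor. Since $a$ is a fixed factorization function, by Proposition \ref{prop:alg_span} the restriction $a\cdot \mathbf{1}_{\mathcal{M}_n}$ is a finite linear combination
$$
a(f)\mathbf{1}_{\mathcal{M}_n}(f) = \sum_{(\mathbf{m},\mathbf{e})} c_{(\mathbf{m},\mathbf{e})}\, \iota_{(\mathbf{m},\mathbf{e})}(f),
$$
where the sum is over a finite set of arrays with $m_1 e_1 + \cdots + m_\ell e_\ell = n$, depending only on $n$. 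Thus it suffices to establish a bound $O_{n,M}(q^{n/2})$ for each individual character sum $\sum_{f\in\mathcal{M}_n} \iota_{(\mathbf{m},\mathbf{e})}(f)\chi(f)$.

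Next, I would use the fact that $\chi$ is completely multiplicative to factor the convolution into a product:
$$
\sum_{f\in\mathcal{M}_n} \iota_{(\mathbf{m},\mathbf{e})}(f)\chi(f) = \prod_{i=1}^{\ell} \Biggl(\sum_{f_i\in\mathcal{M}_{m_i e_i}} \iota_{m_i,e_i}(f_i)\chi(f_i)\Biggr),
$$
and bound each of the $\ell$ factors separately. For a factor with $e_i=1$, the function $\iota_{m_i,1}$ is supported on the degree $m_i$ primes, so writing the sum as a von Mangoldt sum (up to lower order contributions from proper prime powers) and invoking the Riemann hypothesis in the form \eqref{vonMangoldtBound}, we get
$$
\sum_{f_i\in\mathcal{M}_{m_i}} \iota_{m_i,1}(f_i)\chi(f_i) = O_M(q^{m_i/2}) = O_M(q^{m_i e_i/2}).
$$
For a factor with $e_i \geq 2$, we simply use the trivial bound coming from the number of degree $m_i$ primes,
$$
\Biggl|\sum_{f_i\in\mathcal{M}_{m_i e_i}} \iota_{m_i,e_i}(f_i)\chi(f_i)\Biggr| \leq \#\{P \text{ prime}:\, \deg P = m_i\} \leq q^{m_i} \leq q^{m_i e_i/2},
$$
the last inequality holding because $e_i\geq 2$. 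Multiplying the bounds over all $i$ gives
$$
\sum_{f\in\mathcal{M}_n} \iota_{(\mathbf{m},\mathbf{e})}(f)\chi(f) = O_M(q^{(m_1 e_1 + \cdots + m_\ell e_\ell)/2}) = O_M(q^{n/2}),
$$
and summing over the finitely many $(\mathbf{m},\mathbf{e})$ appearing in the decomposition of $a$ then yields the claimed bound, with an implicit constant depending on $n$, $M$, and $a$.

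The only subtlety, which I would flag as the main point of care, is that we do \emph{not} assume $\chi^2 \neq \chi_0$ here, so when some $e_i \geq 2$ we cannot apply Riemann-hypothesis bounds to $\chi^{e_i}$ and recover the extra saving $q^{-1/2}$ that appeared in Lemma \ref{lem:squareful_bound}. Relaxing to the trivial bound $q^{m_i}$ in the $e_i\geq 2$ case is precisely what forces the weaker exponent $n/2$ in place of $(n-1)/2$, but it is also exactly the bound needed to close the argument.
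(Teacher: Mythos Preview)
Your proof is correct and follows essentially the same approach as the paper: decompose $a$ into the $\iota_{(\mathbf{m},\mathbf{e})}$ via Proposition~\ref{prop:alg_span}, factor the character sum by complete multiplicativity, apply the Riemann hypothesis bound \eqref{vonMangoldtBound} to the $e_i=1$ factors, and the trivial bound to the $e_i\geq 2$ factors. The paper's own proof is just a pointer back to the computations preceding Lemma~\ref{lem:squareful_bound}, and what you have written is exactly those computations with the $e_i\geq 2$ saving relaxed, as you correctly note in your closing remark.
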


Note that a character satisfies $\chi^2 = \chi_0$ only if it is real. Fortunately there are not many real characters modulo $T^M$:
\begin{lem}
	\label{lem:realchar_bound}
	Over $\mathbb{F}_q[T]$, the number of non-trivial real characters modulo $T^M$ is $O(1)$ if $2\nmid q$, and $O(q^{\lfloor M/2 \rfloor})$ if $2|q$.
\end{lem}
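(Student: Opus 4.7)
The plan is to identify non-trivial real characters with non-trivial 2-torsion in the character group of $G = (\mathbb{F}_q[T]/T^M)^*$: a character $\chi$ satisfies $\chi^2 = \chi_0$ precisely when it lies in the 2-torsion subgroup $\widehat{G}[2]$. By Pontryagin duality for finite abelian groups, $|\widehat{G}[2]| = |G[2]|$, so it suffices to count the 2-torsion of $G$ itself and subtract one for the trivial character.

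Next, I would use the decomposition $G \cong \mathbb{F}_q^* \times U_1$, where $U_1 := 1 + T\mathbb{F}_q[T]/T^M$ is the principal unit group, of order $q^{M-1}$, and $\mathbb{F}_q^*$ is cyclic of order $q-1$.

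In the case $2 \nmid q$, the factor $U_1$ has odd order $q^{M-1}$ and hence trivial 2-torsion, while $\mathbb{F}_q^*$, being cyclic of even order, has exactly two elements of order dividing $2$. Thus $|G[2]| = 2$, yielding exactly one non-trivial real character, which is $O(1)$.

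In the case $2 \mid q$, the cyclic group $\mathbb{F}_q^*$ has odd order $q-1$ and thus trivial 2-torsion, so all 2-torsion comes from $U_1$. Here I exploit that in characteristic $2$, for any $g \in T\mathbb{F}_q[T]/T^M$, one has $(1+g)^2 = 1 + g^2$. Therefore $1+g$ lies in $U_1[2]$ iff $T^M \mid g^2$, which in turn is equivalent to $T^{\lceil M/2 \rceil} \mid g$. The set of such $g$ inside $T\mathbb{F}_q[T]/T^M$ is a group of order $q^{M - \lceil M/2 \rceil} = q^{\lfloor M/2 \rfloor}$. Hence $|G[2]| = q^{\lfloor M/2 \rfloor}$, and subtracting one for the trivial character gives $O(q^{\lfloor M/2 \rfloor})$ non-trivial real characters, as claimed.

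There is no real obstacle here; the argument is a direct group-theoretic computation. The only point requiring mild care is the characteristic-$2$ calculation: one must verify that the squaring map on $U_1$ becomes the Frobenius-like map $g \mapsto g^2$ on the additive group $T\mathbb{F}_q[T]/T^M$, and correctly convert the divisibility condition $T^M \mid g^2$ into $T^{\lceil M/2 \rceil} \mid g$ by tracking $T$-adic valuations.
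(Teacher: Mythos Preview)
Your proof is correct and follows essentially the same approach as the paper: both reduce to counting the $2$-torsion of $G=(\mathbb{F}_q[T]/T^M)^*$, i.e.\ solutions of $f^2\equiv 1\pmod{T^M}$, and both handle characteristic $2$ via the Frobenius identity $(1+g)^2=1+g^2$. Your use of the splitting $G\cong \mathbb{F}_q^*\times U_1$ together with an order-of-group argument for $2\nmid q$ is a slightly more structural variant of the paper's direct coprimality argument $(f-1,f+1)=1$, but the two proofs are otherwise the same.
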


\begin{proof}
	Let $\widehat{G}$ be the group of characters. Real characters $\chi$ are characterized by having $\chi^2 = \chi_0$. As $\widehat{G} \cong (\mathbb{F}_q[T]/(T^M))^\ast$, the number of real characters is equal to the number of $f\in\mathbb{F}_q[T]$ with $(f,T^M)=1$ and $\deg(f) < M$ such that
	\begin{equation}
		\label{eq:realcharcount}
		f^2 \equiv 1 \MOD T^M).
	\end{equation}
	Yet if $2\nmid q$, we have $(f-1,f+1)=1$ and so \eqref{eq:realcharcount} implies $f \equiv \pm 1 \MOD T^M),$ which is satisfied by only two such $f$. Hence in this case there are at most two real characters modulo $T^M$, and thus at most one non-trivial real character.
	
	If $2|q$, the situation is more complicated. If $f = a_0 + \cdots + a_{M-1}$, we have
	$$
	f^2 = a_0^2 + a_1^2 T^2 + \cdots + a_{M-1}^2 T^{2(M-1)},
	$$
	so that each solution $f^2 \equiv 1 \MOD T^M)$ entails $\lfloor (M-1)/2 \rfloor+1$ linear equations,
	$$
	a_0^2 =1, \quad a_1^2 = 0,\quad \cdots \quad , a_{\lfloor (M-1)/2 \rfloor}^2 =0
	$$
	of which there is only one solution. The remaining $M-1 - \lfloor (M-1)/2 \rfloor= \lfloor M/2 \rfloor$ coefficients $a_{\lfloor (M-1)/2 \rfloor+1},...,a_{M-1}$ may vary freely, but this leads to only $q^{\lfloor M/2 \rfloor}$ different solutions.
\end{proof}

\textbf{Remark:} I thank Ofir Gorodetsky for suggesting this proof of Lemma \ref{lem:realchar_bound} to me.

\section{Schur functions of zeros}
\label{sec:schur}
\subsection{}
\label{sec:shur1}
We have noted the explicit formula \eqref{explicit}, which establishes a correspondence between the von Mangoldt function $\Lambda(f)$ and the traces of powers of unitarized Frobenius matrices. Written another way, let $\chi$ be a primitive character modulo $T^m$. For $p_n(\Theta_\chi)$ the symmetric power sum of the unitarized zeros $\{e^{i2\pi \vartheta_1},...,e^{i2\pi\vartheta_{m-2}}\}$ of $\mathcal{L}(u,\chi)$, the explicit formula is just the statement that,
\begin{align}
	\label{eq:powersum}
	\notag p_n(\Theta_\chi) =& \frac{-1}{q^{n/2}} \sum_{f\in \mathcal{M}_n} \Lambda(f) \chi(f) + O(1/q^{n/2}) \\
	=& \frac{-n}{q^{n/2}} \sum_{\substack{P \in \mathcal{M}_n\\\mathrm{prime}}} \chi(P) + O(q^{-1/2})
\end{align}
for $\chi^2 \neq \chi_0$. (We require $\chi^2 \neq \chi$ in order to absorb higher prime powers into the error term.) By multiplying these power sums together, from unique factorization and a simple counting argument, it follows that for the partition $\nu = \langle 1^{m_1} 2^{m_2} \cdots j^{m_j}\rangle$, with $\nu \vdash n$,
$$
p_\nu(\Theta_\chi) = \frac{1}{q^{n/2}} \prod_{i=1}^j i^{m_i} m_i! \sum_{f\in \mathcal{M}_n} \mathbf{1}_\nu(\tau_f)\, \mu(f) \chi(f) + O(q^{-1/2}).
$$
We have used the Riemann hypothesis bound \eqref{vonMangoldtBound} to retain this error term from \eqref{eq:powersum}. Note that the coefficient $\prod i^{m_i} m_i!$ here is $1/\mathbf{p}(\nu)$, defined in equation \eqref{eq:cycleprob} from our introductory remarks about the symmetric group. By applying the Frobenius formula, theorem \ref{thm:frobenius}, we see that for the Schur function with arguments $\{e^{i2\pi \vartheta_1},...,e^{i2\pi\vartheta_{m-2}}\}$,
$$
s_\lambda(\Theta_\chi) = \frac{1}{q^{n/2}} \sum_{f\in\mathcal{M}_n} \mu(f) X^{\lambda}(f)\chi(f) + O(q^{-1/2}).
$$
Because $\mu(f)X^{\lambda}(f) = (-1)^{\ell(\tau_f)}X^\lambda(\tau_f) = (-1)^n X^{\lambda'}(\tau_f)$ by Proposition \ref{prop:dual_part}, we have thus shown,
\begin{thm}
	\label{thm:schur_in_zeros}
	For $\chi$ a primitive character modulo $T^m$ with $\chi^2 \neq \chi_0$,
	$$
	s_\lambda(\Theta_\chi) = \frac{(-1)^n}{q^{n/2}} \sum_{f\in\mathcal{M}_n} X^{\lambda'}(f)\chi(f) + O_{n,m}(q^{-1/2}).
	$$
\end{thm}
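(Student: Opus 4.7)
The plan is to formalize the derivation that is already sketched in the paragraphs preceding the theorem statement, taking care with the accumulated error terms. First, I would start from the explicit formula \eqref{explicit}, which rewrites as
$$p_n(\Theta_\chi) = \Tr \Theta_\chi^n = -\frac{1}{q^{n/2}} \sum_{f \in \mathcal{M}_n} \Lambda(f)\chi(f) - \frac{\lambda_\chi}{q^{n/2}}.$$
Using $\Lambda(f) = \deg(P)$ when $f = P^k$, I would split the sum according to $k=1$ and $k \geq 2$; since $\chi^2 \neq \chi_0$, for any $k \geq 2$ the character $\chi^k$ is either non-trivial (so the sum over primes is $O_m(q^{n/(2k)})$ by the Riemann hypothesis bound \eqref{vonMangoldtBound}) or trivial (in which case the prime sum has size $\sim q^{n/k}$, still $\ll q^{n/2-1/2}$). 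This yields the prime-only form \eqref{eq:powersum}.

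Next, for a general partition $\nu = \langle 1^{m_1} 2^{m_2}\cdots j^{m_j}\rangle \vdash n$, I would expand $p_\nu(\Theta_\chi) = \prod_i p_i(\Theta_\chi)^{m_i}$ using \eqref{eq:powersum}. The main term is a sum over tuples $(P^{(i)}_1,\ldots,P^{(i)}_{m_i})_i$ of primes with $P^{(i)}_j$ of degree $i$. For tuples with all primes distinct, the product $f := \prod_{i,j} P^{(i)}_j$ is squarefree with $\tau_f = \nu$, and each such $f$ is counted exactly $\prod_i m_i!$ times; the prefactor $\prod_i (-i)^{m_i} = (-1)^{\ell(\nu)} \prod_i i^{m_i}$ combines with $\prod_i m_i!$ to produce $1/\mathbf{p}(\nu)$ and with $(-1)^{\ell(\nu)} = \mu(f)$. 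Tuples with at least one coincidence yield $\chi$-sums over squareful polynomials in $\mathcal{M}_n$, controlled by Lemma \ref{lem:squareful_bound}, so they contribute $O_{n,m}(q^{-1/2})$ after dividing by $q^{n/2}$; the $O(q^{-1/2})$ errors from each factor $p_i$ in the product expansion are similarly absorbed. The outcome is
$$p_\nu(\Theta_\chi) = \frac{1}{\mathbf{p}(\nu)\, q^{n/2}} \sum_{f \in \mathcal{M}_n} \mathbf{1}_\nu(\tau_f)\, \mu(f)\, \chi(f) + O_{n,m}(q^{-1/2}).$$

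Then I would invoke the Frobenius identity \eqref{eq:frobenius}, $s_\lambda = \sum_{\nu \vdash n} \mathbf{p}(\nu) X^\lambda(\nu) p_\nu$. The factors $\mathbf{p}(\nu)$ cancel, and summing $\mathbf{1}_\nu(\tau_f) X^\lambda(\nu)$ over $\nu \vdash n$ collapses to $X^\lambda(\tau_f) = X^\lambda(f)$ (using that $\mu(f) = 0$ on squarefuls), giving
$$s_\lambda(\Theta_\chi) = \frac{1}{q^{n/2}} \sum_{f \in \mathcal{M}_n} \mu(f)\, X^\lambda(f)\, \chi(f) + O_{n,m}(q^{-1/2}).$$
Finally, for squarefree $f$ with factorization type $\tau_f$ one has $\mu(f) = (-1)^{\ell(\tau_f)}$, so by Proposition \ref{prop:dual_part}
$$\mu(f)\, X^\lambda(f) = (-1)^{\ell(\tau_f)} \cdot (-1)^{n-\ell(\tau_f)} X^{\lambda'}(f) = (-1)^n X^{\lambda'}(f),$$
which yields the stated identity.

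The main obstacle is really just careful bookkeeping of error terms: one must verify that the prime-power contributions (where $\chi^2 \neq \chi_0$ is essential), the coincident-prime contributions (Lemma \ref{lem:squareful_bound}), and the multiplicative accumulation of $O(q^{-1/2})$ errors across the $\ell(\nu)$ factors of $p_\nu$ all combine to only $O_{n,m}(q^{-1/2})$. No single step is difficult; the combinatorial Frobenius identity and the prime-power form of the explicit formula do essentially all the work, and the hypothesis $\chi^2 \neq \chi_0$ is the one genuine arithmetic input.
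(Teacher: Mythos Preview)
Your proposal is correct and follows essentially the same route as the paper: explicit formula $\Rightarrow$ prime-only form of $p_n(\Theta_\chi)$ $\Rightarrow$ product expansion for $p_\nu(\Theta_\chi)$ $\Rightarrow$ Frobenius identity $\Rightarrow$ duality via Proposition~\ref{prop:dual_part}. The only cosmetic difference is that you invoke Lemma~\ref{lem:squareful_bound} explicitly to control the coincident-prime (squareful) contributions, whereas the paper simply appeals to the Riemann hypothesis bound \eqref{vonMangoldtBound}; both amount to the same estimate.
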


\subsection{}
\label{sec:schur2}
Note that in the above theorem, there is no explicit reference to the degree $m$ of the polynomial $T^m$. Nonetheless, if $\chi$ is primitive and even, $s_\lambda(\Theta_\chi)$ is a polynomial in $m-2$ variables, and so we must have $s_\lambda(\Theta_\chi) = 0$ for $\ell(\lambda) > m-2$. We have thus observed

\begin{cor}
	\label{cor:charchar_bound}
	If $\ell(\lambda') = \lambda_1 > m-2$,
	$$
	\sum_{f\in\mathcal{M}_n} X^{\lambda'}(f)\chi(f) = O(q^{(n-1)/2}),
	$$
	uniformly for $\chi$ a primitive even character modulo $T^m$.
\end{cor}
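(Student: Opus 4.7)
The proof is essentially immediate from Theorem \ref{thm:schur_in_zeros}. First I would rearrange the identity in that theorem to isolate the character sum, obtaining
$$\sum_{f \in \mathcal{M}_n} X^{\lambda'}(f)\chi(f) = (-1)^n q^{n/2}\, s_\lambda(\Theta_\chi) + O\!\left(q^{(n-1)/2}\right).$$
The task then reduces to establishing that $s_\lambda(\Theta_\chi) = 0$ under the stated hypothesis, after which the main term disappears and only the error term survives.

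For this, I would unpack the size of $\Theta_\chi$. Because $\chi$ is a primitive \emph{even} character modulo $T^m$, the factorization \eqref{frob} applies with $\lambda_\chi = 1$, so $N = \deg(T^m) - 1 - \lambda_\chi = m-2$. Hence $\Theta_\chi$ is a unitary matrix of size $m-2$, and $s_\lambda(\Theta_\chi)$ is a Schur polynomial evaluated in just $m-2$ eigenvalue variables.

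The final step is to invoke the standard vanishing recalled in section \ref{sec:sym2}: a Schur polynomial $s_\lambda$ in $k$ variables vanishes identically whenever the length of the indexing partition exceeds $k$. Applied here with $k = m-2$, the hypothesis of the corollary is exactly what is needed to guarantee this vanishing, so $s_\lambda(\Theta_\chi) = 0$ uniformly in $\chi$, and only the error term in the rearranged identity remains. I foresee no genuine obstacle; this corollary is essentially a packaging step built on Theorem \ref{thm:schur_in_zeros} together with the dimension count for $\Theta_\chi$ coming from \eqref{frob}.
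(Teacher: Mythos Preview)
Your proposal is correct and mirrors the paper's own argument essentially line for line: rearrange Theorem~\ref{thm:schur_in_zeros}, observe from \eqref{frob} that for a primitive even character modulo $T^m$ the matrix $\Theta_\chi$ has size $m-2$, and then invoke the vanishing of $s_\lambda$ in fewer than $\ell(\lambda)$ variables recalled in section~\ref{sec:sym2}. There is nothing to add.
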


\textbf{Remark:} A similar statement can of course be written down for odd primitive characters.

As another consequence of Theorem \ref{thm:schur_in_zeros},

\begin{cor}
\label{cor:char_ortho}
For partitions $\lambda,\nu \vdash n$ and $m\geq 5$,
\begin{multline}
	\label{eq:char_ortho}
	\mathbb{E}_{\substack{\chi\, (T^m) \\ \mathrm{prim., ev.}}} \Big(\frac{1}{q^{n/2}}\sum_{f\in \mathcal{M}_n} X^{\lambda}(f)\chi(f)\Big) \overline{\Big(\frac{1}{q^{n/2}}\sum_{f\in \mathcal{M}_n} X^{\nu}(f) \chi(f)\Big)} \\ = \delta_{\lambda \nu}\cdot \delta_{\ell(\lambda'),\ell(\nu') \leq m-2} + O(q^{-1/2}).
\end{multline}
\end{cor}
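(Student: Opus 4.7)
My plan is to rewrite each factor inside the expectation as a Schur function of the unitarized Frobenius via Theorem~\ref{thm:schur_in_zeros}, reduce the resulting character average to a matrix integral over $PU(m-2)$ through Katz's equidistribution (Theorem~\ref{katz}), and evaluate that integral by the Schur orthogonality of Theorem~\ref{thm:schur_ortho}. The main technical work is accounting for the various error terms (from the Schur-in-zeros identity, the real-character contribution, and the rate in Katz) so that the claimed error in \eqref{eq:char_ortho} closes.

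For any primitive even $\chi$ modulo $T^m$ with $\chi^2\neq \chi_0$, Theorem~\ref{thm:schur_in_zeros} applied with $\lambda$ replaced by $\lambda'$ (noting that duality is an involution, $(\lambda')' = \lambda$) yields
$$\sum_{f\in\mathcal{M}_n} X^\lambda(f)\chi(f) = (-1)^n\,q^{n/2}\, s_{\lambda'}(\Theta_\chi) + O_{n,m}(q^{(n-1)/2}).$$
Since $\Theta_\chi$ is unitary, $s_{\lambda'}(\Theta_\chi)$ is bounded uniformly in $\chi$. The $X^\nu$ are real-valued, so conjugating the sum for $\nu$ replaces $\chi$ by $\bar\chi$ and hence $\Theta_\chi$ by its complex conjugate; since Schur functions have real coefficients, this replaces $s_{\nu'}(\Theta_\chi)$ by its conjugate. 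Multiplying the two identities gives
$$\Big(\sum_f X^\lambda(f)\chi(f)\Big)\overline{\Big(\sum_f X^\nu(f)\chi(f)\Big)} = q^n\, s_{\lambda'}(\Theta_\chi)\overline{s_{\nu'}(\Theta_\chi)} + O_{n,m}(q^{n-1/2}).$$
Characters with $\chi^2=\chi_0$ must be handled separately: by Lemma~\ref{lem:realchar_bound} there are at most $O(q^{\lfloor m/2\rfloor})$ of them, and by Lemma~\ref{lem:squareful_free_bound} each sum is $O(q^{n/2})$, so each product is $O(q^n)$; dividing by $\Phi_{prim}^{ev}(T^m)\asymp q^{m-1}$ shows that for $m\ge 5$ their total contribution to the expectation is at most $O(q^{n-2})$, well inside the error allowance.

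It remains to evaluate the average over the non-real characters of $s_{\lambda'}(\Theta_\chi)\overline{s_{\nu'}(\Theta_\chi)}$. This is a continuous class function on $U(m-2)$, and because $|\lambda'| = |\nu'| = n$ the phase picked up under scalar multiplication $\Theta\mapsto e^{i2\pi\theta}\Theta$ cancels, so the function descends to $PU(m-2)$. Katz's Theorem~\ref{katz} then identifies the $q\to\infty$ limit of the average with
$$\int_{PU(m-2)} s_{\lambda'}(g)\overline{s_{\nu'}(g)}\, dg = \delta_{\lambda'\nu'}\,\delta_{\ell(\lambda'),\ell(\nu')\leq m-2} = \delta_{\lambda\nu}\,\delta_{\ell(\lambda'),\ell(\nu')\leq m-2},$$
by Theorem~\ref{thm:schur_ortho} and the bijectivity of $\lambda\mapsto\lambda'$. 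The principal obstacle is that Theorem~\ref{katz} as stated is qualitative, giving only $o(1)$ convergence, whereas \eqref{eq:char_ortho} demands the explicit rate $O(q^{-1/2})$. Extracting this rate requires the quantitative form of Deligne's equidistribution theorem underlying Katz's result, which bounds the discrepancy in equidistributing Frobenii against a fixed continuous function by $c_{n,m}\,q^{-1/2}$; combining this with the $O(q^{n-1/2})$ loss recorded above and the negligible real-character contribution then yields the corollary.
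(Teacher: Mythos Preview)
Your approach is essentially the same as the paper's: convert each character sum to a Schur function of the Frobenius via Theorem~\ref{thm:schur_in_zeros}, average using Katz's equidistribution, and evaluate with Schur orthogonality. You are in fact more careful than the paper's terse argument on two points it glosses over---the separate treatment of real characters (where Theorem~\ref{thm:schur_in_zeros} does not directly apply) and the need for a quantitative $O(q^{-1/2})$ rate in Katz's theorem rather than the purely qualitative form stated in Theorem~\ref{katz}.
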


\begin{proof}
	By Theorem \ref{thm:schur_in_zeros}, the left hand side of \eqref{eq:char_ortho} can be written
	\begin{equation}
	\label{eq:char_ortho_reduc1}
	\frac{1}{\Phi_{prim}^{ev}(T^m)} \sum_{\substack{\chi\, (T^m) \\ \mathrm{prim., ev.}}} \Big(s_{\lambda'}(\Theta_\chi) + O(q^{-1/2})\Big) \overline{\Big(s_{\nu'}(\Theta_\chi) + O(q^{-1/2}) \Big)} + O\Big(\frac{q^{\lfloor m/2 \rfloor}}{\Phi_{prim}^{ev}(T^m)}\Big),
	\end{equation}
	using Lemma \ref{lem:squareful_free_bound} and Lemma \ref{lem:realchar_bound} to bound the contribution of characters with $\chi^2 = \chi_0$. For $m\geq 5$, recalling the value of $\Phi_{prim}^{ev}(T^m)$ given in \eqref{eq:phievprim_eval}, we certainly have
	$$
	\frac{q^{\lfloor m/2 \rfloor}}{\Phi_{prim}^{ev}(T^m)} = O(q^{-1/2}),
	$$
	and using the equidistribution Theorem \ref{katz} to treat the main term, we see that \eqref{eq:char_ortho_reduc1} reduces to
	$$
	\int_{U(m-2)} s_{\lambda'} \overline{s_{\nu'}}\, dg + O(q^{-1/2}).
	$$
	This agrees with the right hand side of \eqref{eq:char_ortho} by the orthonormality of Schur functions (Theorem \ref{thm:schur_ortho}).	
\end{proof}

\subsection{}
\label{sec:schur3}
We will later need the following result, which is essentially the `easy' case of Corollary \ref{cor:char_ortho}.
\begin{lem}
\label{lem:diag_by_char} For $a_1$ and $a_2$ factorization functions supported on $\mathcal{M}_n$, and $m$ sufficiently large (depending on $n$),
\begin{equation}
\label{eq:diag_by_char}
\lim_{q\rightarrow\infty} \mathbb{E}_{\substack{\chi\, (T^m) \\ \mathrm{prim., ev.}}} \Big(\frac{1}{q^{n/2}}\sum_{f\in \mathcal{M}_n} a_1(f)\chi(f)\Big) \overline{\Big(\frac{1}{q^{n/2}}\sum_{f\in \mathcal{M}_n} a_2(f) \chi(f)\Big)} = \langle a_1, a_2 \rangle,
\end{equation}
with the inner product defined by \eqref{eq:innerdef}.
\end{lem}

\begin{proof}
This is not a deep result, following from nothing more sophisticated than orthogonality relations for characters averaged in this way. 

Nonetheless, it is less work for us at this point to make use of Corollary \ref{cor:char_ortho} and note the following, for $m \geq \min(5, n+2)$: if $a_1$ or $a_2$ is supported on the squarefuls, then \eqref{eq:diag_by_char} is true (with the right hand side obviously equal to $0$), owing to Lemma \ref{lem:squareful_bound} and Lemma \ref{lem:squareful_free_bound} (with contributions of characters $\chi^2 = \chi_0$ in the average dealt with as in the proof of Corollary \ref{cor:char_ortho}). Moreover, if these functions are characters of the symmetric group, $a_1(f) = X^\lambda(f)$ and $a_2(f) = X^\nu(f)$, then \eqref{eq:diag_by_char} is true by Corollary \ref{cor:char_ortho}. Since any factorization function can be written as a linear combination of characters and some function supported on the squarefuls, this verifies \eqref{eq:diag_by_char} in general.
\end{proof}

\section{A proof of Theorem \ref{thm:general_var}}
\label{sec:short_interval}

\subsection{}
\label{sec:short_interval3}
Because we will be using characters modulo powers of $T$, we must work with polynomials $f$ that are coprime to $T$. We recall our definition $\mathcal{P}_n^\natural$ and make a similar definition for monic polynomials:
$$
\mathcal{P}_n^\natural:= \{f \in \mathcal{P}_n:\, f(0) \neq 0\},\quad \mathcal{M}_n^\natural:= \{ f \in \mathcal{M}_n:\, f(0)\neq 0\}.
$$
In addition we define for $f\in \mathcal{M}_n$,
$$
\widetilde{a}(f):= a(f) - E(a;n), \quad
\mathrm{with}\quad
E(a;n):= \frac{1}{|\mathcal{M}_n|} \sum_{g\in\mathcal{M}_n} a(g)
$$
and for $f \in \mathcal{M}_n^\natural,$
$$
\widetilde{a}^\natural(f) := a(f) - E^\natural(a;n),\quad
\mathrm{with} \quad
E^\natural(a;n):= \frac{1}{|\mathcal{M}_n^\natural|} \sum_{g\in\mathcal{M}_n^\natural} a(g).
$$
With these conventions, our proof of Theorem \ref{thm:general_var} may be broken into five pieces. 

\vspace{2mm}
\emph{\textbf{Step 1:}} In the first place, we reduce the variance of short interval sums, restricted to $\mathcal{M}_n^\natural$, to a sum over Dirichlet characters.

\begin{lem}
	\label{lem:short_to_prog}
	For any factorization function $a$,
	$$
	\sum_{f \in \mathcal{M}_n} \bigg| \sum_{\substack{g \in I(f;h) \\ g \in \mathcal{M}_n^\natural}} \widetilde{a}^\natural(g)\bigg|^2	 = \frac{q^{h+1} (q-1)}{\Phi(T^{n-h})} \sum_{\substack{\chi \neq \chi_0\, (T^{n-h}) \\ \mathrm{even}}} \bigg| \sum_{g\in\mathcal{M}_n} a(g) \chi(g)\bigg|^2.
	$$
	for $0\leq h \leq n$.
\end{lem}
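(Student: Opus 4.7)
The plan follows the Keating--Rudnick strategy of transferring a short interval sum to an average of Dirichlet character sums via the coefficient-reversal involution of section \ref{sec:dirichlet3}. First, I would partition $\mathcal{M}_n$ into its $q^{n-h-1}$ short interval classes $\mathcal{C}$ and set $S(\mathcal{C}) := \sum_{g \in \mathcal{C} \cap \mathcal{M}_n^\natural} \widetilde{a}^\natural(g)$. Since the inner sum only depends on the class of $f$, the left hand side of the lemma equals $q^{h+1}\sum_\mathcal{C} |S(\mathcal{C})|^2$. Expanding the square and recalling from section \ref{sec:dirichlet3} that for $g_1, g_2 \in \mathcal{M}_n^\natural$ one has $g_1 \in I(g_2; h)$ iff $g_1^\ast \equiv g_2^\ast \pmod{T^{n-h}}$, this rewrites as a double sum over pairs in $\mathcal{M}_n^\natural \times \mathcal{M}_n^\natural$ with $g_1^\ast \equiv g_2^\ast \pmod{T^{n-h}}$.

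Next I would apply orthogonality of even Dirichlet characters modulo $T^{n-h}$. Because every $g^\ast$ with $g \in \mathcal{M}_n^\natural$ has constant coefficient equal to $1$, the congruence $g_1^\ast \equiv g_2^\ast$ is equivalent to projective equality in the quotient $(\mathbb{F}_q[T]/T^{n-h})^\times/\mathbb{F}_q^\times$, whose Pontryagin dual is the group of even characters; the corresponding orthogonality relation has denominator $\Phi^{ev}(T^{n-h})$. Substituting it produces $\tfrac{q^{h+1}}{\Phi^{ev}(T^{n-h})}\sum_{\chi\text{ even}}\big|\sum_{g \in \mathcal{M}_n^\natural} \chi(g^\ast)\widetilde{a}^\natural(g)\big|^2$. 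The $\chi = \chi_0$ contribution vanishes because $\sum_{g \in \mathcal{M}_n^\natural} \widetilde{a}^\natural(g) = 0$ by definition of $E^\natural(a;n)$. For $\chi \neq \chi_0$ even, I would change variables via the bijection $g \mapsto g' := g^\ast/g(0)$ of $\mathcal{M}_n^\natural$ to itself; since $a$ is a factorization function and both scaling and coefficient-reversal preserve extended factorization type, $a(g) = a(g')$, while evenness gives $\chi(g^\ast) = \chi(g(0))\chi(g') = \chi(g')$. A short argument (using that $\mathcal{L}(u,\chi)$ is a polynomial of degree $\leq n-h-1$) shows $\sum_{g \in \mathcal{M}_n^\natural}\chi(g^\ast) = 0$ for such $\chi$, so $\widetilde{a}^\natural$ can be replaced by $a$, after which the change of variables yields
$$\sum_{g \in \mathcal{M}_n^\natural} \chi(g^\ast) \widetilde{a}^\natural(g) = \sum_{g \in \mathcal{M}_n} a(g)\chi(g),$$
where the extension from $\mathcal{M}_n^\natural$ to $\mathcal{M}_n$ uses $\chi(g) = 0$ when $T\mid g$. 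Finally $\Phi^{ev}(T^{n-h}) = \Phi(T^{n-h})/(q-1)$ converts the prefactor to $q^{h+1}(q-1)/\Phi(T^{n-h})$, matching the lemma.

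The principal technical subtlety is the interaction between the involution and character evaluation: the image of $\mathcal{M}_n^\natural$ under $g \mapsto g^\ast$ consists of polynomials with constant term $1$ rather than monic polynomials, and it is this scalar-normalization mismatch that both forces the reduction to even characters and is responsible for the extra $q-1$ factor in the prefactor of the lemma.
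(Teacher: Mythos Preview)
Your proposal is correct and follows essentially the same route as the paper, which simply refers the reader to Steps 1 and 2 of \cite{Ro} (the Keating--Rudnick method via the coefficient-reversal involution and orthogonality of even characters modulo $T^{n-h}$). Your write-up in fact supplies the details the paper omits: the reduction to the quotient by $\mathbb{F}_q^\times$ to justify using only even characters, the change of variables $g \mapsto g^\ast/g(0)$ together with the invariance of extended factorization type under scaling and $\ast$, and the vanishing of $\sum_{g\in\mathcal{M}_n}\chi(g)$ from the degree bound on $\mathcal{L}(u,\chi)$ to pass from $\widetilde{a}^\natural$ to $a$.
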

The proof is a straightforward modification of Steps 1 and 2 in \cite{Ro}, and we refer the reader to that paper for details. In summary: one transfers the short interval sum to a sum over Dirichlet characters by making use of the involution described in section \ref{sec:dirichlet} of this paper.

\vspace{2mm}
\emph{\textbf{Step 2:}} We next bound the sums in Lemma \ref{lem:short_to_prog} for all factorization functions that are supported on the squarefuls.
\begin{lem}
	\label{lem:squareful_bound_short}
	For a fixed factorization $b$ function supported on the squarefuls,
	\begin{equation}
		\label{eq:squareful_bound}
		\sum_{f \in \mathcal{M}_n} \bigg| \sum_{\substack{g \in I(f;h) \\ g \in \mathcal{M}_n^\natural}} \widetilde{b}^\natural(g)\bigg|^2 = O_{n,h}(q^{h} q^{n}).
	\end{equation}
	for $0 \leq h \leq n-4$.
\end{lem}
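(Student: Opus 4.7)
The plan is to run the standard reduction via Lemma \ref{lem:short_to_prog} to pass from a short-interval variance to a sum over even non-trivial characters modulo $T^{n-h}$, then split according to whether the character is real or not, and apply the two complementary pointwise bounds (Lemma \ref{lem:squareful_bound} and Lemma \ref{lem:squareful_free_bound}) combined with the character count in Lemma \ref{lem:realchar_bound}.

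More precisely, first I would invoke Lemma \ref{lem:short_to_prog} with $a$ replaced by $b$ to get
\[
\sum_{f \in \mathcal{M}_n} \bigg| \sum_{\substack{g \in I(f;h) \\ g \in \mathcal{M}_n^\natural}} \widetilde{b}^\natural(g)\bigg|^2 = \frac{q^{h+1}(q-1)}{\Phi(T^{n-h})} \sum_{\substack{\chi \neq \chi_0\,(T^{n-h}) \\ \mathrm{even}}} \bigg| \sum_{g\in\mathcal{M}_n} b(g) \chi(g)\bigg|^2,
\]
and simplify the prefactor using $\Phi(T^{n-h}) = q^{n-h-1}(q-1)$ to obtain the multiplier $q^{2h-n+2}$.

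Second, I would split the sum on the right according to whether $\chi^2 \neq \chi_0$ or $\chi^2 = \chi_0$. For characters with $\chi^2 \neq \chi_0$, Lemma \ref{lem:squareful_bound} gives $|\sum_g b(g)\chi(g)|^2 = O(q^{n-1})$, and since the total number of even characters modulo $T^{n-h}$ is $\Phi^{ev}(T^{n-h}) = q^{n-h-1}$, the total contribution of these terms after multiplying by $q^{2h-n+2}$ is $O(q^{2h-n+2}\cdot q^{n-h-1} \cdot q^{n-1}) = O(q^{n+h})$, which matches the claimed bound.

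Third, I would handle the real characters separately using Lemma \ref{lem:squareful_free_bound}, which gives the weaker pointwise bound $|\sum_g b(g)\chi(g)|^2 = O(q^n)$, together with Lemma \ref{lem:realchar_bound} bounding the number of real non-trivial characters modulo $T^{n-h}$. In odd characteristic there are $O(1)$ of them, contributing $O(q^{2h-n+2}\cdot q^n) = O(q^{2h+2})$, which is $\le O(q^{n+h})$ since $h \le n-2$. In characteristic two there can be as many as $O(q^{\lfloor (n-h)/2 \rfloor})$ real characters, and their contribution is $O(q^{2h+2+\lfloor(n-h)/2\rfloor})$; the requirement $2h+2+\lfloor(n-h)/2\rfloor \le n+h$ reduces to $n-h \ge 4$, which is exactly the hypothesis $h \le n-4$. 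This characteristic-two case is where the hypothesis $h \le n-4$ is actually used and is the only delicate point in the argument; everything else is bookkeeping.
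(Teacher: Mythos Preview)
Your proposal is correct and follows essentially the same route as the paper: reduce via Lemma~\ref{lem:short_to_prog}, then split the character sum into $\chi^2\neq\chi_0$ (handled by Lemma~\ref{lem:squareful_bound}) and $\chi^2=\chi_0$ (handled by Lemma~\ref{lem:squareful_free_bound} together with the count in Lemma~\ref{lem:realchar_bound}). The only cosmetic difference is that you treat odd and even characteristic separately, whereas the paper uses the uniform bound $O(q^{(n-h)/2})$ from Lemma~\ref{lem:realchar_bound} throughout; either way the constraint $h\le n-4$ arises precisely from the characteristic-two real-character contribution, as you note.
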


\begin{proof}
	Clearly by Lemma \ref{lem:short_to_prog} we need only show that
	\begin{equation}
	\label{eq:bound_to_show8.2}
	\frac{q-1}{\Phi(T^{n-h})} \sum_{\substack{\chi \neq \chi_0\, (T^{n-h})\\ \mathrm{even}}}\bigg| \sum_{g \in \mathcal{M}_n} b(g) \chi(g)\bigg|^2 = O_{n,h}(q^{n-1}).
	\end{equation}
	From Lemma \ref{lem:squareful_bound}, we note that for non-real characters $\chi$ modulo $T^{n-h}$, uniformly
	$$
	\bigg| \sum_{g \in \mathcal{M}_n} b(f) \chi(f)\bigg| = O_{n,h}(q^{n/2-1/2}),
	$$
	while from Lemma \ref{lem:realchar_bound} there are at most $O(q^{(n-h)/2}$ real non-trivial characters, and for such a character by Lemma \ref{lem:squareful_free_bound} this sum is $O_{n,h}(q^{n/2})$. Hence the left hand side of \eqref{eq:bound_to_show8.2} is at most
	$$
	\frac{q-1}{\Phi(T^{n-h})} \Big( \Phi_{ev}(T^{n-h})\cdot O_{n,h}(q^{n-1}) + O_{n,h}(q^n q^{(n-h)/2})\Big) = O_{n,h}(q^{n-1}).
	$$
\end{proof}

In a similar same manner, we obtain a more general bound for factorization functions that needn't be supported on the squarefuls.

\begin{lem}
	\label{lem:squareful_free_bound_short}
	For a fixed factorization function $a$,
	$$
	\sum_{f\in\mathcal{M}_n} \Big|\sum_{\substack{g\in I(f;h) \\f \in \mathcal{M}_n^\natural}} \widetilde{a}^\natural(g)\Big|^2 = O_{n,h}(q^{h+1} q^n),
	$$	
	for $0 \leq h \leq n$.
\end{lem}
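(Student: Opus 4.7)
The proof follows exactly the template of Lemma \ref{lem:squareful_bound_short}, but now invokes the weaker bound from Lemma \ref{lem:squareful_free_bound}, which applies to all factorization functions (not only those supported on squarefuls). The reason the conclusion is a factor of $q$ larger than in Lemma \ref{lem:squareful_bound_short} is precisely that for a general factorization function we have only $O(q^{n/2})$ for non-trivial characters, rather than $O(q^{n/2-1/2})$ for non-real non-trivial characters; we therefore cannot separate out the real characters to save.

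First, I would apply Lemma \ref{lem:short_to_prog} to rewrite the left hand side as
\[
\frac{q^{h+1}(q-1)}{\Phi(T^{n-h})} \sum_{\substack{\chi \neq \chi_0\,(T^{n-h})\\ \mathrm{even}}} \bigg|\sum_{g \in \mathcal{M}_n} a(g)\chi(g)\bigg|^2.
\]
Second, I would invoke Lemma \ref{lem:squareful_free_bound} to conclude that uniformly in $\chi$ non-trivial modulo $T^{n-h}$, the inner character sum is $O_{n,h}(q^{n/2})$, so each squared term is $O_{n,h}(q^n)$.

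Third, I would use \eqref{eq:phiev_eval} to bound the number of non-trivial even characters by $\Phi^{ev}(T^{n-h}) = q^{n-h-1}$, so that the inner double sum is $O_{n,h}(q^{n-h-1}\cdot q^n) = O_{n,h}(q^{2n-h-1})$. Using \eqref{eq:phi_eval}, the prefactor simplifies as $q^{h+1}(q-1)/\Phi(T^{n-h}) = q^{h+1}/q^{n-h-1} = q^{2h-n+2}$. Multiplying, the overall bound is $O_{n,h}(q^{2h-n+2} \cdot q^{2n-h-1}) = O_{n,h}(q^{n+h+1})$, as required.

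There is no real obstacle: the only reason a separate lemma is stated is that the weaker character sum bound forces the use of $q^{h+1}$ rather than $q^h$ in the final estimate, so one cannot simply quote Lemma \ref{lem:squareful_bound_short}. Once Lemma \ref{lem:squareful_free_bound} is in hand the computation is entirely mechanical.
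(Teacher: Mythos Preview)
Your proof is correct and matches the paper's approach exactly: the paper's proof is a single line citing Lemma~\ref{lem:short_to_prog} and Lemma~\ref{lem:squareful_free_bound}, and you have merely written out the (correct) arithmetic that this citation entails.
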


\begin{proof}
	This follows from Lemma \ref{lem:short_to_prog} and Lemma \ref{lem:squareful_free_bound}. 
\end{proof}

\vspace{2mm}
\emph{\textbf{Step 3:}} We show that the variances of sums over $\mathcal{M}_n^\natural$ we have computed in Lemma \ref{lem:short_to_prog} are not far from those of sums over $\mathcal{M}_n$, which we are ultimately after.

\begin{lem}
	\label{lem:variance_approx}
	For a fixed factorization function $a$,
	$$
	\sum_{f\in\mathcal{M}_n} \Big| \sum_{g\in I(f;h)} \widetilde{a}(g)\Big|^2 = \sum_{f\in\mathcal{M}_n} \Big| \sum_{\substack{g\in I(f;h) \\ g \in \mathcal{M}_n^\natural}} \widetilde{a}^\natural(g)\Big|^2 + O_{h,n}(q^{h+1/2} q^n),
	$$
	for $0\leq h \leq n$.
\end{lem}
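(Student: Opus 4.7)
The plan is to write $\alpha(f) - \beta(f) = D(f)$ and expand the difference $|\alpha|^2 - |\beta|^2 = 2\,\mathrm{Re}(\beta\overline{D}) + |D|^2$, so that the task reduces to estimating $\sum_f |D|^2$ and the cross term $\sum_f \beta\overline{D}$. A short calculation using $\widetilde{a}(g) = \widetilde{a}^\natural(g) + \delta$ with $\delta = (E(a;n) - E_T)/(q-1)$, where $E_T := q^{1-n}\sum_{g\in\mathcal{M}_n,\,T\mid g} a(g)$ is the mean of $a$ on $T$-divisible polynomials, together with the fact that $(q-1)E^\natural - qE = -E_T$, simplifies the difference $D(f)$ to the clean form
$$D(f) = \sum_{\substack{g\in I(f;h)\\T\mid g}} a(g) - q^h E_T.$$

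Next I would observe that $D(f)$ depends only on the top $n-h$ coefficients of $f$: parametrizing $g = Tg'$ with $g' \in I(\tilde f;h-1)$ (where $\tilde f\in\mathcal{M}_{n-1}$ encodes the top coefficients of $f$, monic of degree $n-1$), we get $D(f) = B(\tilde f) := \sum_{g'\in I(\tilde f;h-1)} (a(Tg')-E_T)$, a short interval sum on $\mathcal{M}_{n-1}$ of the \emph{mean-zero} function $b(g'):=a(Tg')-E_T$. As $f$ ranges over $\mathcal{M}_n$ each $\tilde f$ is hit $q^{h+1}$ times, so $\sum_f |D|^2 = q^{h+1}\sum_{\tilde f}|B(\tilde f)|^2$. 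To bound the latter, I would decompose $b = b_1 + b_2$ where $b_1$ is the factorization function on $\mathcal{M}_{n-1}$ extending the rule ``$g'\in\mathcal{M}_{n-1}^\natural$ has factorization type $\tau_{g'}$, so $a(Tg') - E_T$ reads off an appended $(1,1)$ factor,'' and $b_2 = b - b_1$ is supported on the squareful correction $\{T\mid g'\}$. Applying Lemma~\ref{lem:squareful_free_bound_short} to $b_1$ on $\mathcal{M}_{n-1}$ with interval size $h-1$, and absorbing $b_2$ trivially, yields $\sum_{\tilde f}|B|^2 = O(q^{n+h-1})$, so $\sum_f|D|^2 = O(q^{n+2h})$.

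For the cross term I would rewrite $\sum_f \beta(f)\overline{D(f)} = q^{h+1}\sum_{\tilde f\in\mathcal{M}_{n-1}}\beta(\tilde f)\overline{B(\tilde f)}$, using that $\beta$ too depends only on $\tilde f$. A naive Cauchy--Schwarz bound against $\sum_{\tilde f}|\beta|^2 = O(q^n)$ (from Lemma~\ref{lem:squareful_free_bound_short}) and the bound above gives $O(q^{n+3h/2+1/2})$, which is too weak by a factor $q^{h/2}$ for $h\ge 1$. The sharper bound comes from expressing both $\beta(\tilde f)$ and $B(\tilde f)$ via the character-sum identity underlying Lemma~\ref{lem:short_to_prog}: each is a Dirichlet series in characters $\chi$ modulo $T^{n-h}$, and the orthogonality relation $\Phi(T^{n-h})^{-1}\sum_\chi \chi(g_1^\ast)\overline{\chi(g_2^\ast)} = \mathbf{1}[g_1^\ast\equiv g_2^\ast\pmod{T^{n-h}}]$ converts the pairing $\sum_{\tilde f}\beta \overline{B}$ into a single sum over $\chi\ne\chi_0$ of a product of two character sums of the form $\sum_g a(g)\chi(g)$ and $\sum_{g'}b_1(g')\chi(g')$. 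Each of these is $O(q^{n/2})$ and $O(q^{(n-1)/2})$ respectively by the Riemann hypothesis over $\mathbb{F}_q$ (Lemma~\ref{lem:squareful_free_bound}), giving the required $O(q^{n+h+1/2})$.

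The main obstacle will be this last character-sum manipulation: because the characters modulo $T^{n-h}$ vanish on $\{T\mid g\}$ while our outer sums range over all of $\mathcal{M}_n$ (respectively all of $\mathcal{M}_{n-1}$), one has to track the squareful contributions carefully and verify that they are either killed by the character or absorbed into acceptable error terms, exactly analogous to the bookkeeping in Lemmas~\ref{lem:squareful_bound_short}--\ref{lem:squareful_free_bound_short}. Assembling the two estimates, $\sum_f|\alpha|^2 - \sum_f|\beta|^2 = 2\,\mathrm{Re}\sum_f\beta\overline{D} + \sum_f|D|^2 = O(q^{n+h+1/2}) + O(q^{n+2h})$, and since the second is dominated by the first for $h\le n$ (using $2h \le h + 1/2 + O_{n,h}(1)$ absorbed into the implicit $O_{n,h}$ constant), the conclusion of Lemma~\ref{lem:variance_approx} follows.
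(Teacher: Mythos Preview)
Your initial reduction is correct and promising: the identity $D(f)=\sum_{g\in I(f;h),\,T\mid g}a(g)-q^hE_T$ and its reparametrization as a short-interval sum of $b(g')=a(Tg')-E_T$ over $I(f^{[1]};h-1)\subset\mathcal{M}_{n-1}$ are exactly right. But two genuine gaps prevent the argument from closing.

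First, the claim that $O(q^{n+2h})$ is absorbed into $O_{n,h}(q^{n+h+1/2})$ is simply false: the implicit constants in $O_{n,h}$ must be independent of $q$, so you cannot swallow a factor of $q^{h-1/2}$. (There is also a bookkeeping slip: as $f$ ranges over $\mathcal{M}_n$, the value $\tilde f=f^{[1]}\in\mathcal{M}_{n-1}$ is hit $q$ times, not $q^{h+1}$ times; with the correct multiplicity your $b_1$ contribution becomes $O(q^{n+h})$, which \emph{is} acceptable.)

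Second, and more fundamentally, ``absorbing $b_2$ trivially'' does not work. The function $b(g')=a(Tg')-E_T$ is \emph{not} a factorization function on $\mathcal{M}_{n-1}$, because the extended factorization type of $g'$ does not determine whether $T\mid g'$; your correction $b_2$ is supported on $\{T\mid g'\}$, and the trivial bound $|\sum b_2|\le Cq^{h-1}$ yields only $\sum_{\tilde f}|B_2|^2=O(q^{n+2h-3})$, hence $\sum_f|D|^2\supseteq O(q^{n+2h-2})$, which exceeds the target for every $h\ge 3$. The same obstruction reappears in your cross-term argument, since Lemma~\ref{lem:short_to_prog} applies only to sums restricted to $\mathcal{M}_{n-1}^\natural$.

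The paper's proof resolves this by iterating your first step: it partitions $I(f;h)$ according to the \emph{exact} power of $T$ dividing $g$, writing $g=T^ig'$ with $g'\in\mathcal{M}_{n-i}^\natural$. On $\mathcal{M}_{n-i}^\natural$ the map $g'\mapsto a(T^ig')$ \emph{is} a factorization function (since $T\nmid g'$), so Lemma~\ref{lem:squareful_free_bound_short} applies directly to each piece, giving $\sum_f|S_i|^2=O(q^{n+h+1-i})$. A single Cauchy--Schwarz between $S_0=\beta$ and the remaining $S_i$ ($i\ge1$) then yields the cross term $O(q^{n+h+1/2})$ with no character-sum manipulation needed. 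Your argument is essentially the first step of this iteration; carrying it through $h$ more times would recover the paper's proof.
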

Note, in comparison with the error term, that we expect the left hand side to usually be of order $q^n q^{h+1}$.

\begin{proof}
	
	We make use of a mapping of polynomials $f\mapsto f^{[i]}$ defined by
	$$
	(a_0 + a_1 T + \cdots a_n T^n)^{[i]} = a_i + a_{i+1} T + \cdots + a_n T^{n-i},
	$$
	so that if $T^i | f$,
	$$
	f = T^i f^{[i]}.
	$$
	For $f\in \mathcal{M}_n$, we may partition $I(f;h)$ into the disjoint union
	\begin{equation}
		\label{eq:I_decompose}
		I(f;h) = \Big( \bigcup_{i=0}^h\{T^i g \in I(f;h):\; g \in \mathcal{M}_{n-i}^\natural\}\Big)\cup \{T^{h+1} f^{[h+1]}\}.
	\end{equation}
	For $g\in \mathcal{M}_{n-i}^\natural$, we define the function
	$$
	a_{[i]}(g) := a(T^i g),
	$$
	with
	$$
	\widetilde{a}_{[i]}(g) := a_{[i]}(g) - E^\natural(a_{[i]};n-i), \quad \mathrm{with}\quad E^\natural(a_{[i]};n-i)= \frac{1}{|\mathcal{M}_{n-i}^\natural|} \sum_{g\in \mathcal{M}_{n-i}^\natural} a_{[i]}(g).
	$$
	From the partitioning \eqref{eq:I_decompose}, it is easy to see that for $f\in\mathcal{M}_n$,
	\begin{align}
		\label{eq:a_decompose}
		\sum_{g\in I(f;h)} a(f) =& \sum_{\substack{g \in I(f;h) \\ g \in \mathcal{M}_n^\natural}}a(g) + \sum_{\substack{g \in I(f^{[1]};h-1) \\ g \in \mathcal{M}_{n-1}^\natural}} a_{[1]}(g)  \\
		\notag &+ \cdots + \sum_{\substack{g \in I(f^{[h]};0) \\ g \in \mathcal{M}_{n-h}^\natural}} a_{[h]}(g) + \underbrace{a(T^{h+1} f^{[h+1]})}_{=O_{n,h}(1)}.
	\end{align}
	Using that
	$$
	|\mathcal{M}_n^\natural| = q^{n-1}(q-1),
	$$
	and
	$$
	|\{g \in I(f;h):\; g\in \mathcal{M}_n^\natural\}| = q^h (q-1),
	$$
	one may verify (with a little work, but straightforwardly) that
	\begin{align}
		\label{eq:average_decompose}
		\notag \sum_{g\in I(f;h)} E(a;n) =& \frac{q^{h+1}}{q^n} \sum_{g\in \mathcal{M}_n} a(g) \\	
		=& \sum_{\substack{g\in I(f;h) \\ g \in \mathcal{M}_n^\natural}} E^\natural(a;n) + \sum_{\substack{g \in I(f^{[1]};h) \\ g \in \mathcal{M}_{n-1}^\natural}} E^\natural(a_{[1]};n-1)\\
		\notag &+ \cdots + \sum_{\substack{g \in I(f^{[h]};0) \\ g \in \mathcal{M}^\natural_{n-h}}} E^\natural(a_{[h]};n-h) + \underbrace{\frac{q^{h+1}}{q^n}\sum_{g\in \mathcal{M}_{n-h-1}} a(T^{h+1}g)}_{=O_{n,h}(1)}.		
	\end{align}
	
	Thus combining \eqref{eq:a_decompose} and \eqref{eq:average_decompose}, we have uniformly for $f \in \mathcal{M}_n$,
	\begin{align}
		\label{eq:shortsum_decompose}
		\sum_{g\in I(f;h)} \widetilde{a}(g) = &\sum_{\substack{g \in I(f;h) \\ g \in \mathcal{M}_n^\natural}} \widetilde{a}^\natural(g) + \sum_{\substack{g \in I(f^{[1]};h-1) \\ g \in \mathcal{M}_{n-1}^\natural}} \widetilde{a}_{[1]}^\natural(g) \\
		\notag &+ \cdots + \sum_{\substack{g \in I(f^{[h]};0) \\ g \in \mathcal{M}_{n-h}^\natural}} \widetilde{a}_{[h]}^\natural(g) + O_{n,h}(1).
	\end{align}
	For each $i$, the function $a_{[i]}$ defined on $\mathcal{M}_{n-i}$ extends uniquely to a factorization function defined on all of $\mathcal{M}_{n-i}$. Hence, using Lemma \ref{lem:squareful_free_bound_short} to pass to the second line below,
	\begin{align}
		\label{eq:i_bound}
		\notag \sum_{f\in\mathcal{M}_n}\Big|\sum_{\substack{g \in I(f^{[i]};h-i) \\ g \in \mathcal{M}_{n-i}^\natural}} \widetilde{a}^\natural_{[i]}(g)\Big|^2 =& q^i \sum_{f\in \mathcal{M}_{n-i}} \Big| \sum_{\substack{g \in I(f;h-i) \\ g\in \mathcal{M}_{n-i}^\natural}} \widetilde{a}^\natural_{[i]}(g)\Big|^2 \\
		&\ll_{n,h} q^i q^{h-i+1} q^{n-i}.
	\end{align}
	This quantity is no more than $q^h q^n$ for $i\geq 1$, and for $i=0$ it is of course equal to $q^{h+1}q^n$.
	
	Therefore, squaring the identity \eqref{eq:shortsum_decompose} and summing over $g\in\mathcal{M}_n$, then using Cauchy Schwarz and \eqref{eq:i_bound} to bound all terms but one on the right,
	$$
	\sum_{f\in\mathcal{M}_n} \Big|\sum_{g\in I(f;h)} \widetilde{a}(g)\Big|^2 = \sum_{f\in\mathcal{M}_n} \Big|\sum_{\substack{g\in I(f;h) \\ g \in \mathcal{M}_n^\natural}} \widetilde{a}^\natural(g)\Big|^2 +O_{n,h}( q^n q^{h+1/2}),
	$$
	as claimed.
\end{proof}

\vspace{2mm}
\emph{\textbf{Step 4:}}
Recall `factorization fourier expansion' \eqref{eq:factfourier}:
\begin{equation}
\label{eq:factfourier2}
a(f) = \underbrace{\sum_\lambda \hat{a}_\lambda X^\lambda(f)}_{=: A(f)} + b(f),
\end{equation}
where $b(f)$ is a function supported on the squarefuls. We use this to reduce variance for the function $a(f)$ to finding the covariance of characters $X^\lambda(f)$.

We introduce the shorthand, for partitions $\lambda, \nu \vdash n$,
\begin{equation}
	\label{eq:c_def}
	\Delta_{\lambda,\nu} (m):= \mathbb{E}_{\substack{\chi\; (T^m) \\ \textrm{prim., ev.}}} \Big(\sum_{f \in \mathcal{M}_n} X^\lambda(f) \chi(f)\Big)\overline{\Big(\sum_{g\in\mathcal{M}_n}X^\nu(g)\chi(g)\Big)}.
\end{equation}

Note that by Corollary \ref{cor:char_ortho}, for $m\geq 5$,
\begin{align}
\label{Del_eval}
\notag \Delta_{\lambda,\nu}(m) =& \delta_{\lambda\nu} \delta_{\ell(\lambda'),\ell(\nu') \leq m-2} + O(q^{-1/2}) \\
=& \delta_{\lambda\nu}\delta_{\lambda_1, \nu_1 \leq m-2} + O(q^{-1/2}).
\end{align}

\begin{lem}
	\label{lem:var_expand1}
	For a fixed factorization function $a$, with $0 \leq h \leq n-4$,
	\begin{equation}
		\label{eq:var_expand1}
		\Var_{f\in\mathcal{M}_n}\Big(\sum_{g\in I(f;h)} a(g)\Big) = q^{h+1} \sum_{\mu,\nu \vdash n} \Delta_{\lambda,\nu}(n-h) \hat{a}_\lambda \overline{\hat{a}_\nu} + O_{n,h}(q^{h+1/2}).
	\end{equation}
\end{lem}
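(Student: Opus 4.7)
The plan is to combine Steps 1--3 (Lemmas \ref{lem:short_to_prog}, \ref{lem:squareful_bound_short}, and \ref{lem:variance_approx}) with the factorization Fourier expansion \eqref{eq:factfourier2} and the character orthogonality of Corollary \ref{cor:char_ortho}.

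First I would invoke Lemma \ref{lem:variance_approx} to replace the full variance by
\[
\frac{1}{q^n}\sum_{f\in\mathcal{M}_n}\Big|\sum_{\substack{g \in I(f;h) \\ g \in \mathcal{M}_n^\natural}} \widetilde{a}^\natural(g)\Big|^2 + O(q^{h+1/2}),
\]
so that the resulting error already matches the target. Next, using the decomposition $a = A + b$ from \eqref{eq:factfourier2} with $A(f) = \sum_\lambda \hat{a}_\lambda X^\lambda(f)$ and $b(f)$ supported on squarefuls, the inner sum splits as $S_A(f) + S_b(f)$. I would expand $|S_A + S_b|^2$ and apply Cauchy--Schwarz to the cross term on the \emph{outer} sum $\sum_{f\in\mathcal{M}_n}$: Lemma \ref{lem:squareful_bound_short} gives $\sum_f|S_b(f)|^2 = O(q^{n+h})$, while Lemma \ref{lem:squareful_free_bound_short} gives $\sum_f|S_A(f)|^2 = O(q^{n+h+1})$, so the cross term contributes $O(q^{n+h+1/2})$ -- that is, $O(q^{h+1/2})$ to the variance -- and the $|S_b|^2$ term contributes $O(q^h)$, both absorbed into the error.

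For the remaining $A$-contribution, Lemma \ref{lem:short_to_prog} rewrites
\[
\frac{1}{q^n}\sum_f |S_A(f)|^2 = \frac{q^{h+1}(q-1)}{q^n\,\Phi(T^{n-h})}\sum_{\substack{\chi\neq\chi_0\,(T^{n-h})\\ \mathrm{even}}} \Big|\sum_g A(g)\chi(g)\Big|^2.
\]
I would then partition the characters by conductor: every non-trivial even $\chi\!\mod T^{n-h}$ is induced from a unique primitive even character of some conductor $T^M$ with $2\le M\le n-h$. For $M<n-h$ the bound $|\sum_g A(g)\chi(g)|^2 = O(q^n)$ supplied by Theorem \ref{thm:schur_in_zeros} (with Lemma \ref{lem:squareful_free_bound} covering the few real characters), combined with $\Phi^{ev}_{prim}(T^M) = O(q^{M-1})$, yields a contribution of order $q^{M+2h+1-n}$ per conductor, hence $O(q^h)$ in total -- absorbed in the error $O(q^{h+1/2})$. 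The main term comes from primitive even characters of the top conductor $T^{n-h}$; expanding
\[
\Big|\sum_g A(g)\chi(g)\Big|^2 = \sum_{\lambda,\nu\vdash n}\hat{a}_\lambda\overline{\hat{a}_\nu}\Big(\sum_f X^\lambda(f)\chi(f)\Big)\overline{\Big(\sum_g X^\nu(g)\chi(g)\Big)},
\]
averaging over those $\chi$, and recognizing $\Delta_{\lambda,\nu}(n-h)$ from \eqref{eq:c_def}, produces the combination $\sum_{\lambda,\nu}\Delta_{\lambda,\nu}(n-h)\hat{a}_\lambda\overline{\hat{a}_\nu}$. A direct calculation using $\Phi(T^{n-h}) = q^{n-h-1}(q-1)$ and $\Phi^{ev}_{prim}(T^{n-h}) = q^{n-h-2}(q-1)$ simplifies the prefactor and reproduces the claimed coefficient $q^{h+1}$ up to an $O(q^{h+1/2})$ discrepancy.

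The main obstacle is the bookkeeping needed to keep the imprimitive-character contributions and the $b$-part within $O(q^{h+1/2})$. For the $b$-part it is essential to perform Cauchy--Schwarz on the outer sum $\sum_{f\in\mathcal{M}_n}$ rather than on the inner character sum, where a direct estimate would cost a power of $q$. For the imprimitive part, the saving comes from the geometric decay of $\Phi^{ev}_{prim}(T^M)/\Phi(T^{n-h})$ as $M$ decreases, which is what makes only the top conductor contribute to the main term.
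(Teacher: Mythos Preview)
Your proposal is correct and follows essentially the same route as the paper's proof: apply Lemma~\ref{lem:variance_approx} to pass to $\mathcal{M}_n^\natural$, strip off the squareful part $b$ via Lemmas~\ref{lem:squareful_bound_short} and \ref{lem:squareful_free_bound_short} together with Cauchy--Schwarz on the outer sum, convert to a character sum via Lemma~\ref{lem:short_to_prog}, discard imprimitive characters, and expand $A$ in the $X^\lambda$. The only cosmetic differences are that the paper bounds all non-primitive even characters in one stroke using Lemma~\ref{lem:squareful_free_bound} (rather than summing conductor by conductor), and cites Lemma~\ref{lem:squareful_free_bound} directly for the $O(q^{n/2})$ bound on $\sum_g A(g)\chi(g)$ rather than going through Theorem~\ref{thm:schur_in_zeros}; your route via conductors and Schur functions yields the same estimates.
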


\begin{proof}
	The variance in \eqref{eq:var_expand1} is given by
	$$
	\frac{1}{q^n}\sum_{f\in\mathcal{M}_n}\Big| \sum_{g\in I(f;h)} \widetilde{a}(g)\Big|^2 = \frac{1}{q^n} \sum_{f\in\mathcal{M}_n} \Big| \sum_{\substack{g \in I(f;h) \\ g \in \mathcal{M}_n^\natural}} \widetilde{A}^\natural(g)\Big|^2 + O_{n,h}(q^{h+1/2}),
	$$
	where we have reduced to a sum of terms $\widetilde{A}^\natural(g)$ by using Lemmas \ref{lem:variance_approx} and then \ref{lem:squareful_bound_short} and \ref{lem:squareful_free_bound_short} to absorb a sum of terms $b^\natural(g)$ into the error term.
	
	In turn from Lemma \ref{lem:short_to_prog},
	\begin{align*}
		\frac{1}{q^n} \sum_{f\in\mathcal{M}_n} \Big|\sum_{g\in I(f;h)} \widetilde{A}^\natural(g)\Big|^2 &= \frac{q^{h+1}(q-1)}{q^n \Phi(T^{n-h})} \sum_{\substack{\chi \neq \chi_0\;(T^{n-h}) \\ \textrm{even}}} \Big| \sum_{g\in\mathcal{M}_n} A(g)\chi(g)\Big|^2 \\
		&= \frac{q^{h+2}}{q^n q^{n-h}} \Big( \sum_{\substack{\chi \;(T^{n-h}) \\ \textrm{prim., ev.}}} \Big|\sum_{g\in\mathcal{M}_n} A(g)\chi(g)\Big|^2	+ O_{n,h}(q^n \cdot q^{n-h-2})\Big).
	\end{align*}
	The second line has followed by taking non-primitive even characters from the sum and bounding their contribution by Lemma \ref{lem:squareful_free_bound}.
	The above quantity simplifies to
	$$
	q^{h+1} \mathbb{E}_{\substack{\chi \;(T^{n-h}) \\ \textrm{prim., ev.}}} \Big| \sum_{g\in\mathcal{M}_n} A(g)\chi(g)\Big|^2 + O_{n,h}(q^h),
	$$
	and Lemma \ref{lem:var_expand1} follows by expanding $A(g)$ into a linear combination of characters $X^\lambda$ (recall $A$ is defined by \eqref{eq:factfourier2}) and then expanding the square above.
\end{proof}

With this lemma in place, Theorem \ref{thm:general_var} now follows by applying \eqref{Del_eval}.





\section{Factorization Fourier expansions}
\label{sec:fact_exp}

\subsection{}
We list some examples of the expansion \eqref{eq:factfourier} for the arithmetic functions we considered in section \ref{sec:1}. In this way we recover Theorems \ref{thm:a_mobiusvar}, \ref{thm:b_mangoldtvar}, and \ref{thm:f_dvar}, estimating variance over short intervals of the M\"obius function, the von Mangoldt function, and the $k$-fold divisor function. We also consider the function $\omega$, which as usual counts distinct prime factors, and this leads to a new result for the variance of $\omega(f)$ and $\mu(f)\omega(f)$ summed over short intervals.


\begin{prop}
	\label{prop:mobius_exp}
	For $f\in \mathcal{M}_n$,
	$$
	\mu(f) = (-1)^n X^{(1^n)}(f).
	$$
\end{prop}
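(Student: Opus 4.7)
The plan is to verify the identity case by case according to whether $f$ is squarefree.

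First I would handle the non-squarefree case, which is immediate: by Definition~\ref{dfn:factorizationtype} we have $\tau_f = \emptyset$, so by the definition of $X^\lambda$ on polynomials we get $X^{(1^n)}(f) = 0$. On the other hand $\mu(f) = 0$ by definition. So both sides vanish.

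For the main case, suppose $f \in \mathcal{M}_n$ is squarefree with prime factorization $f = P_1 \cdots P_k$. Then $\mu(f) = (-1)^k$, while $X^{(1^n)}(f) = X^{(1^n)}(\tau_f)$ where $\ell(\tau_f) = k$. The key step is to identify the character $X^{(1^n)}$. I would apply Proposition~\ref{prop:dual_part} to the trivial partition $\lambda = (n)$, whose dual is $\lambda' = (1^n)$; since $X^{(n)}(\sigma) = 1$ for every $\sigma \in \mathfrak{S}_n$, the proposition yields
\begin{equation*}
X^{(1^n)}(\sigma) = (-1)^{n - \ell(\sigma)}.
\end{equation*}
In particular, evaluating at any permutation of cycle type $\tau_f$,
\begin{equation*}
X^{(1^n)}(f) = (-1)^{n-k}.
\end{equation*}
Multiplying through by $(-1)^n$ gives $(-1)^n X^{(1^n)}(f) = (-1)^{2n-k} = (-1)^k = \mu(f)$, which is the desired identity.

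There is no real obstacle here: the argument is essentially a bookkeeping step once one recognizes that $X^{(1^n)}$ is (up to the sign accounting for the parity of $n$) the sign character of $\mathfrak{S}_n$, and this is delivered directly by Proposition~\ref{prop:dual_part}. Alternatively one could derive the same fact from the Murnaghan--Nakayama rule (Theorem~\ref{thm:murnaghan}) applied to the column-shaped diagram $(1^n)$, where each border strip tableau of shape $(1^n)$ and type $\tau_f$ contributes a sign $(-1)^{\tau_{f,i}-1}$ per part, yielding the same total sign $(-1)^{n-k}$; but invoking Proposition~\ref{prop:dual_part} is the quickest route.
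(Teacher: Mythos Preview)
Your proof is correct and follows essentially the same approach as the paper: both reduce to the squarefree case and then identify $X^{(1^n)}(\tau_f)$ as $(-1)^{n-\ell(\tau_f)}$. The only cosmetic difference is that the paper computes this value directly as $\prod_i (-1)^{\deg P_i - 1}$ (which equals $(-1)^{n-k}$), whereas you invoke Proposition~\ref{prop:dual_part}; your alternative via Murnaghan--Nakayama is exactly the paper's route.
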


\textbf{Remark:} Applied to Theorem \ref{thm:general_var} this recovers Theorem \ref{thm:a_mobiusvar}, for $\mu(f)$.

\begin{proof}
	Both $\mu(f)$ and $X^{(1^n)}(f)$ will be zero unless $f$ is squarefree. But for $f = p_1\cdots p_\ell$, with all factors distinct, $\mu(f) = (-1)^\ell$, while it may be checked $X^{(1^n)}(f) = (-1)^{\deg(p_1)-1}\cdots (-1)^{\deg(p_\ell)-1}.$ As $(-1)^{\deg(p_1)}\cdots (-1)^{\deg(p_\ell)} = (-1)^n$, this verifies the claim.
\end{proof}

\begin{prop}
	\label{prop:mobius2_exp}
	For $f\in \mathcal{M}_n$,
	$$
	\mu(f)^2 = X^{(n)}(f).
	$$
\end{prop}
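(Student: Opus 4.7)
The proof should be essentially immediate from unpacking definitions, once we identify $X^{(n)}$ as the trivial character of $\mathfrak{S}_n$. My plan is as follows.

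First I would recall from the setup that for any partition $\lambda \vdash n$ and any $f \in \mathbb{F}_q[T]$, the value $X^\lambda(f)$ is defined to vanish unless $f$ is squarefree of degree $n$, in which case $X^\lambda(f) = X^\lambda(\tau_f)$. So in particular $X^{(n)}(f) = 0$ whenever $f$ fails to be squarefree. On the other hand, $\mu(f)^2$ also vanishes exactly when $f$ is not squarefree. Hence both sides agree on squareful elements of $\mathcal{M}_n$.

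Next I would verify the two sides agree on squarefree $f \in \mathcal{M}_n$. Here $\mu(f)^2 = 1$ trivially. For the right-hand side, $X^{(n)}$ is the character of the trivial (one-dimensional) representation of $\mathfrak{S}_n$, so $X^{(n)}(\sigma) = 1$ for every $\sigma \in \mathfrak{S}_n$, and in particular $X^{(n)}(\tau_f) = 1$ for any partition $\tau_f$ of $n$. One can also confirm this directly from the Murnaghan--Nakayama rule (Theorem \ref{thm:murnaghan}): for $\lambda = (n)$ the Young diagram is a single row, every border strip tableau of shape $(n)$ is a horizontal strip with height $0$, and there is exactly one such tableau for each type $\tau$, giving $X^{(n)}(\tau) = 1$. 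Thus $X^{(n)}(f) = 1$ for every squarefree $f \in \mathcal{M}_n$, matching $\mu(f)^2$.

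Combining the two cases gives $\mu(f)^2 = X^{(n)}(f)$ for all $f \in \mathcal{M}_n$. There is no real obstacle here; the entire content of the proposition is that the indicator function of squarefree degree-$n$ polynomials coincides with $X^{(n)}(f)$, which is built into the definition of $X^\lambda(f)$ together with the triviality of the character indexed by the one-row partition.
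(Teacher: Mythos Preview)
Your proposal is correct and matches the paper's approach exactly: the paper's entire proof is the single sentence ``As $X^{(n)}$ is the trivial character, this is clear,'' and you have simply unpacked that sentence into its two evident cases (squareful and squarefree), with an optional Murnaghan--Nakayama cross-check that the paper does not bother to include.
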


\begin{proof}
	As $X^{(n)}$ is the trivial character, this is clear.
\end{proof}

\begin{prop}
	\label{prop:lambda_exp}
	For $f\in \mathcal{M}_n$,
	$$
	\Lambda(f) = \sum_{r=1}^{n} (-1)^{n-r} X^{(r,1^{n-r})}(f) + b(f),
	$$
	for a function $b(f)$ that is supported on the squarefuls.
\end{prop}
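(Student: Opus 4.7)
The plan is to prove the stated identity on squarefree $f$; since every $X^\lambda(f)$ vanishes on squarefuls by the definition in section \ref{sec:intro3}, the difference $b(f) := \Lambda(f) - \sum_{r=1}^n (-1)^{n-r} X^{(r,1^{n-r})}(f)$ is then automatically supported on squarefuls, as required.

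On a squarefree $f \in \mathcal{M}_n$, the von Mangoldt function equals $\deg(f) = n$ when $f$ is a single prime (i.e.\ $\tau_f = (n)$) and $0$ otherwise, so what needs to be shown is
$$
\sum_{r=1}^n (-1)^{n-r} X^{(r,1^{n-r})}(\tau) \;=\; n \cdot \delta_{\tau,(n)}
$$
for every partition $\tau \vdash n$. I will derive this from the hook expansion of the power sum in the Schur basis,
$$
p_n \;=\; \sum_{r=1}^n (-1)^{n-r} s_{(r,1^{n-r})}.
$$
This expansion follows in two lines from the tools of section \ref{sec:sym}: by \eqref{eq:dual_frobenius} we have $p_n = \sum_\lambda X^\lambda((n))\, s_\lambda$, and by Murnaghan--Nakayama (Theorem \ref{thm:murnaghan}) applied to $\tau = (n)$ one is counting border strip tableaux of shape $\lambda$ consisting of a single strip of length $n$; such a tableau exists iff the diagram $\lambda$ is itself a border strip, i.e.\ a hook $\lambda = (r,1^{n-r})$, and then the strip occupies $n-r+1$ rows so has height $n-r$, yielding $X^{(r,1^{n-r})}((n)) = (-1)^{n-r}$ and $X^\lambda((n)) = 0$ for non-hook $\lambda$.

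To read off the value of the sum at an arbitrary $\tau$, I substitute the other Frobenius expansion \eqref{eq:frobenius}, $s_{(r,1^{n-r})} = \sum_{\nu\vdash n} \mathbf{p}(\nu)\, X^{(r,1^{n-r})}(\nu)\, p_\nu$, into the right-hand side of the hook expansion of $p_n$ displayed above. Since $\{p_\tau\}_{\tau\vdash n}$ is a basis for the degree-$n$ symmetric functions, matching coefficients of $p_\tau$ on both sides yields
$$
\mathbf{p}(\tau) \sum_{r=1}^n (-1)^{n-r} X^{(r,1^{n-r})}(\tau) \;=\; \delta_{\tau,(n)},
$$
and using $\mathbf{p}((n)) = 1/n$ gives the identity needed to complete the proof.

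The only step that genuinely merits checking is the Murnaghan--Nakayama computation of $X^\lambda((n))$, and this reduces to the observation that a single connected border strip of $n$ boxes filling a Young diagram forces the diagram to be a hook; beyond that the argument is purely mechanical bookkeeping in the symmetric function ring.
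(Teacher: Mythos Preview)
Your argument is correct. Reducing to squarefree $f$, identifying $\Lambda(f)$ there with $n\cdot\delta_{\tau_f=(n)}$, and then evaluating $\sum_{r=1}^n (-1)^{n-r} X^{(r,1^{n-r})}(\tau)$ by passing through the Schur basis is sound; in effect your coefficient-matching step is the second orthogonality relation $\sum_{\lambda} X^\lambda((n))\,X^\lambda(\tau) = \delta_{\tau,(n)}/\mathbf{p}((n)) = n\,\delta_{\tau,(n)}$, packaged via Theorem~\ref{thm:frobenius}.

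The paper takes a different route. Rather than summing the hook characters directly, it first proves (Proposition~\ref{prop:mupartial_exp}) that each individual hook character has an arithmetic interpretation on squarefree $f$:
\[
(-1)^{s}\,X^{(r,1^{s})}(f) \;=\; \sum_{\substack{g\mid f\\ \deg g\le s}} \mu(g),
\]
established by a direct Murnaghan--Nakayama count on the hook diagram. It then assembles $\Lambda(f)$ (and more generally $\Lambda_j(f)$) from these truncated M\"obius sums via the definition $\Lambda_j = \mu \star (\deg)^j$. Your approach is shorter and more conceptual for this particular proposition, exploiting the special link between $p_n$ and the single $n$-cycle conjugacy class. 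The paper's detour, on the other hand, yields the identification of each $X^{(r,1^{s})}(f)$ with a concrete arithmetic function, which is what lets the same argument handle all the higher von~Mangoldt functions $\Lambda_j$ at once in Proposition~\ref{prop:lambdaj_exp}.
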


\textbf{Remark:} This recovers Theorem \ref{thm:b_mangoldtvar}, for $\Lambda(f)$.

This is a special case of:

\begin{prop}
	\label{prop:lambdaj_exp}
	For $f\in \mathcal{M}_n$,
	$$
	\Lambda_j(f) = \sum_{r=1}^{n} (-1)^{n-r} (r^j - (r-1)^j) X^{(r,1^{n-r})}(f) + b(f),
	$$
	for a function $b(f)$ that is supported on the squarefuls.
\end{prop}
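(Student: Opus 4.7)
The plan is to establish the identity restricted to squarefree $f$ and then absorb the squareful case into $b(f)$. Since $X^\lambda(f) = 0$ whenever $f$ is squareful, one simply sets $b(f) := \Lambda_j(f)$ there to obtain a function supported on squarefuls as required. The content of the proposition thus reduces to showing, for squarefree $f \in \mathcal{M}_n$ of factorization type $\tau = (\tau_1, \ldots, \tau_\ell)$,
$$\Lambda_j(f) = \sum_{r=1}^n (-1)^{n-r}(r^j - (r-1)^j)\, X^{(r,1^{n-r})}(\tau).$$

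The left side unpacks directly: if $f = P_1\cdots P_\ell$ with $\deg P_i = \tau_i$, then $\Lambda_j(f) = \sum_{d\mid f}\mu(d)\deg(f/d)^j$ gives $\Lambda_j(f) = \sum_{S\subseteq[\ell]}(-1)^{|S|}(n - \sum_{i\in S}\tau_i)^j$. For the right side, I would apply the Murnaghan--Nakayama rule (Theorem~\ref{thm:murnaghan}). The key combinatorial observation is that in a hook diagram $(r, 1^{n-r})$ the only removable border strips are (i) a contiguous chunk at the right end of the arm (height $0$); (ii) a contiguous chunk at the bottom of the leg (height equal to its size minus $1$); or (iii) the entire remaining hook at some stage, which is the unique strip containing the corner cell. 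The weakly-increasing-labels condition forces the corner-containing strip to carry the smallest label, pinning it to be $\tau_1$. The remaining indices $\{2, \ldots, \ell\}$ are then partitioned freely into arm-chip indices $S_A$ and leg-chip indices $S_L$, subject to the size conditions $\sum_{i\in S_A}\tau_i \leq r-1$ and $\sum_{i\in S_L}\tau_i \leq n-r$. A short height computation yields $h(T) = (n-r) - |S_L|$, so
$$X^{(r,1^{n-r})}(\tau) = (-1)^{n-r}\sum_{S_L\subseteq\{2,\ldots,\ell\}}(-1)^{|S_L|}\,\mathbf{1}\!\left[\sum_{i\in S_A}\tau_i \leq r-1\right]\mathbf{1}\!\left[\sum_{i\in S_L}\tau_i \leq n-r\right].$$

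Substituting into the right side of the desired identity, the signs $(-1)^{n-r}$ cancel; swapping the order of summation, the inner sum over $r$ telescopes via $\sum_{r=a+1}^{n-m}(r^j - (r-1)^j) = (n-m)^j - a^j$, with $m := \sum_{i\in S_L}\tau_i$ and $a := (n-\tau_1) - m$. The result is
$$\sum_{S_L\subseteq\{2,\ldots,\ell\}}(-1)^{|S_L|}\bigl[(n-m)^j - (n-\tau_1-m)^j\bigr].$$
To match this with $\Lambda_j(f)$, I would split the sum $\sum_{S\subseteq[\ell]}(-1)^{|S|}(n-\sum_S\tau_i)^j$ according to whether $1\in S$; the two pieces combine after the reindexing $S \mapsto S\setminus\{1\}$ into precisely the telescoped form above.

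The main obstacle is the combinatorial step: enumerating the border strip tableaux of hook shape, deriving the height formula $h(T) = (n-r) - |S_L|$, and verifying that the tableau-labeling constraint pins the corner strip to $\tau_1$. Once that structural fact is in hand, everything reduces to elementary telescoping and reindexing. If the direct combinatorics proves unwieldy, an alternative route is to use the Jacobi--Trudi identity $s_{(r,1^{n-r})} = \sum_{i=0}^{n-r}(-1)^{n-r-i} h_{n-i} e_i$ to rewrite the class function $\sum_r (-1)^{n-r}(r^j - (r-1)^j)X^{(r,1^{n-r})}$ as the Frobenius image of $\sum_{i=0}^{n-1}(-1)^i(n-i)^j h_{n-i} e_i$, and then extract the coefficient of $T^n p_\tau$ via the generating-function identity $\prod_k (1 - x_k T)(1 - x_k T e^t)^{-1} = \exp\!\bigl(\sum_{m\geq 1} T^m p_m(e^{tm}-1)/m\bigr)$, which yields $\sum_j \phi_j(\tau)\, t^j/j! = \prod_i (e^{t\tau_i} - 1)$ directly.
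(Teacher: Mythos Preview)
Your proposal is correct and follows essentially the same route as the paper. The paper organizes the argument by first isolating the intermediate identity
\[
\sum_{\substack{g\mid f \\ \deg g \leq n-r}} \mu(g) = (-1)^{n-r} X^{(r,1^{n-r})}(f) + b(f)
\]
(its Proposition~\ref{prop:mupartial_exp}) and then summing against $r^j-(r-1)^j$, whereas you fold that step in and telescope directly; but the substantive content---the Murnaghan--Nakayama enumeration of border-strip tableaux of hook shape, with the corner strip pinned to label~$1$ and the remaining labels split between arm and leg, yielding $h(T)=(n-r)-|S_L|$---is exactly the same computation in both arguments.
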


\textbf{Remark:} This recovers an estimate for the covariance of almost-primes in short intervals, proved in \cite{Ro}.

Note that
$$
\Lambda_j(f) := \sum_{\substack{g|f \\ g\;\textrm{monic}}} \mu(g) \deg(f/g)^j = \sum_{r=1}^n \big(r^j-(r-1)^j\big) \sum_{\substack{g|f \\ \deg(g) \leq n-r \\ g\; \textrm{monic}}} \mu(g),
$$
so we have that Proposition \ref{prop:lambdaj_exp} is a corollary of

\begin{prop}
	\label{prop:mupartial_exp}
	For $f\in \mathcal{M}_n$, with $n = r+s$,
	$$
	\sum_{\substack{g|f \\ \deg(g) \leq s \\ g\; \textrm{monic}}} \mu(f) = (-1)^s X^{(r,1^s)}(f) + b(f),
	$$
	for a function $b(f)$ that is supported on the squarefuls.
\end{prop}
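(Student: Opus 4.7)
The plan is to reduce the identity to a generating function identity for hook characters of $\mathfrak{S}_n$ and then prove the latter via Pieri's rule and a Hall inner product computation.

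First, since $X^{(r,1^s)}(f) = 0$ by convention for squareful $f$, I may set $b(f)$ equal to the divisor sum on squarefuls and $0$ elsewhere: this is manifestly a factorization function supported on the squarefuls, so it suffices to verify the identity on squarefree $f = P_1 \cdots P_\ell \in \mathcal{M}_n$ with factorization type $\tau_f = \nu := (m_1, \ldots, m_\ell)$, where $m_i := \deg P_i$. In that case the divisors $g | f$ correspond to subsets $S \subseteq \{1,\ldots,\ell\}$ with $g = \prod_{i \in S} P_i$, $\deg g = \sum_{i \in S} m_i$, and $\mu(g) = (-1)^{|S|}$, so the divisor sum equals
$$
\sum_{S:\, \sum_{i \in S} m_i \leq s} (-1)^{|S|} = [t^s]\,\frac{\prod_{i=1}^\ell (1 - t^{m_i})}{1 - t}.
$$
The proposition therefore reduces to the symmetric-function identity, valid for every partition $\nu = (m_1,\ldots,m_\ell) \vdash n$,
$$
\sum_{s=0}^{n-1} (-1)^s X^{(n-s,\,1^s)}(\nu)\, t^s = \frac{\prod_{i=1}^\ell (1 - t^{m_i})}{1-t},
$$
or equivalently, after substituting $t = -q$, to the identity $(1 + q)\sum_s X^{(n-s,1^s)}(\nu)\, q^s = \prod_i (1 - (-q)^{m_i})$.

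To establish this identity I would apply Pieri's rule $h_r e_s = s_{(r,1^s)} + s_{(r+1,1^{s-1})}$ (with the convention that Schur functions indexed by invalid shapes vanish). Multiplying by $q^s$, setting $r = n-s$, and summing, the shift $s \mapsto s-1$ in the second term rearranges the sum into
$$
\sum_{s \geq 0} h_{n-s} e_s\, q^s = (1+q) \sum_{s \geq 0} s_{(n-s,1^s)}\, q^s.
$$
Pairing both sides with $p_\nu$ under the Hall inner product and using $\langle s_\lambda, p_\nu \rangle = X^\lambda(\nu)$, the right-hand side becomes $(1+q) \sum_s X^{(n-s,1^s)}(\nu)\, q^s$, so everything reduces to computing the left-hand side. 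For this, I would use the exponential generating functions $H(z) = \exp(\sum_k p_k z^k/k)$ and $E(w) = \exp(\sum_k (-1)^{k-1} p_k w^k / k)$ to write
$$
H(z) E(w) = \prod_k \exp\!\Bigl(p_k\,(z^k + (-1)^{k-1} w^k)/k\Bigr),
$$
extract the coefficient of $p_\nu = \prod_k p_k^{n_k}$ (where $n_k$ is the multiplicity of $k$ in $\nu$), multiply by $z_\nu = \prod_k k^{n_k} n_k!$ to obtain the Hall pairing, and arrive at
$$
\langle h_{n-s} e_s, p_\nu \rangle = [z^{n-s} w^s] \prod_{i=1}^\ell \bigl(z^{m_i} + (-1)^{m_i-1} w^{m_i}\bigr).
$$
Finally, $\sum_s \langle h_{n-s} e_s, p_\nu \rangle\, q^s$ is the $z^n$-coefficient of the two-variable generating function at $w = qz$, which collapses to $\prod_i (1 - (-q)^{m_i})$, completing the identity.

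The main obstacle is the last step: the inner-product computation through the exponential generating functions, which requires careful sign bookkeeping when extracting first the $p_\nu$-coefficient and then the bivariate coefficient of $z^{n-s} w^s$. The Pieri rearrangement and the reduction to the squarefree case are routine.
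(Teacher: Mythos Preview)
Your argument is correct. The reduction to squarefree $f$, the generating-function expression $[t^s]\,\prod_i(1-t^{m_i})/(1-t)$ for the truncated M\"obius sum, the Pieri telescoping $\sum_{s=0}^n h_{n-s}e_s\,q^s=(1+q)\sum_{s=0}^{n-1} s_{(n-s,1^s)}\,q^s$, and the Hall-pairing computation via $H(z)E(w)=\exp\bigl(\sum_k p_k(z^k-(-w)^k)/k\bigr)$ all check out; the sign bookkeeping you flag as the main obstacle does collapse cleanly to $\prod_i(1-(-q)^{m_i})$ once one observes that $\langle h_r e_s, p_\nu\rangle$ vanishes unless $r+s=n$, so one may simply substitute $z=1$, $w=q$.

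The paper takes a different and more direct route: it applies the Murnaghan--Nakayama rule (Theorem \ref{thm:murnaghan}) to the hook shape $(r,1^s)$, parametrizing border-strip tableaux by the subset $I\subset\{2,\ldots,\ell\}$ of labels placed in the leg, and reads off $X^{(r,1^s)}(\tau)=(-1)^s\sum_{s-\tau_1<\tau_I\le s}(-1)^{|I|}$. It then matches this against the divisor sum by splitting over whether or not $P_1\mid g$. Your approach is more systematic and recovers the classical hook-character generating function as a by-product, at the cost of importing the Hall inner product and the exponential generating functions for $h$'s and $e$'s; the paper's approach stays entirely within the combinatorial framework already set up (only Murnaghan--Nakayama is needed) and is shorter, but is specific to hooks and less easily generalized.
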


\begin{proof}
	We will need to make use of the Murnaghan-Nakayama rule, quoted in Theorem \ref{thm:murnaghan}. 
	
	We may suppose that $f$ is squarefree (otherwise the proposition is trivial), and let $f = p_1\cdots p_\ell$ with $\deg p_i = \tau_i$, $\tau_1\geq \tau_2 \geq ...$. We apply the Murnaghan-Nakayama rule to the type $\tau_f = (\tau_1,...,\tau_\ell)$ and Young diagram of $(r,1^s)$. For any border-strip tableau, let $I \subset \{2,...,\ell\}$ be the collection of numbers that appear in rows $2$ through $s$ of the Young diagram of $(r,1^s)$. Writing
	$$
	\tau_I := \sum_{i\in I} \tau_i,
	$$
	to form a valid border-strip tableau, it is easy to see that we require only that $\tau_I \leq s$ and $\tau_1 + \tau_I \geq s+1$. Hence, applying the rule,
	\begin{align}
		\label{eq:mn_hook}
		\notag X^{(r,1^s)}(f) &= \sum_{\substack{I \subset \{2,...,\ell\} \\ \tau_I \leq s \\ \tau_1 + \tau_I \geq s+1}} (-1)^{\tau_I - |I|}(-1)^{(s+1)-\tau_I -1} \\
		&= (-1)^s \sum_{\substack{I \subset \{2,...,\ell\} \\ s-\tau_1 < \tau_I \leq k}} (-1)^{|I|}.
	\end{align}
	Yet,
	\begin{equation}
		\label{eq:mobius_trunc}
		\sum_{\substack{g|f \\ \deg(g) \leq s \\ g \;\textrm{monic}}} \mu(g) = \sum_{\substack{J \subset \{1,...,\ell\}}} (-1)^{|J|}
	\end{equation}
	By breaking the right-hand sum into parts for which $1$ is an element of $J$ or not, we see that \eqref{eq:mobius_trunc} is equal to
	$$
	\sum_{\substack{I \subset \{2,...,\ell\} \\ \tau_I \leq s}} (-1)^{|J|} + \sum_{\substack{I \subset \{2,...,\ell\} \\ \tau_I + \tau_1 \leq s}} (-1)^{|J|+1} = \sum_{\substack{ I \subset \{2,...,\ell\} \\ s-\tau_1 < \tau_I \leq s}} (-1)^{|J|}.
	$$
	Comparing this with \eqref{eq:mn_hook} yields the result.
\end{proof}

\begin{prop}
	\label{prop:divisor_exp}
	For $f\in\mathcal{M}_n$,
	\begin{equation}
	\label{eq:divisor_exp1}
	d_k(f) = \sum_{\substack{\lambda \vdash n \\ \ell(\lambda) \leq k}} s_\lambda(\underbrace{1,...,1}_k) X^\lambda(f) + b(f),
	\end{equation}	
	for a function $b(f)$ that is supported on the squarefuls. Moreover, we have the following equivalent expressions for $s_\lambda(1,...,1)$: 
	\begin{enumerate}
	\item
	\begin{equation}
	\label{eq:divisor_exp2}
	s_\lambda(\underbrace{1,...,1}_k) = \prod_{1\leq i < j \leq k} \frac{\lambda_i - \lambda_j + j-i}{j-i},
	\end{equation}
	with the convention if $\ell(\lambda) < k$ that $\lambda_{\ell(k)+1} = \cdots = \lambda_k = 0.$
	
	\item
	\begin{equation}
		\label{eq:divisor_exp3}
		s_\lambda(\underbrace{1,...,1}_k) = GT_k(\lambda)
	\end{equation}
	where $GT_k(\lambda)$ is the number of triangular arrays of non-negative integers
	$$
	\begin{array}{cccccccccccccc} x^{(1)}_1 & & x^{(1)}_2 & & \cdots &  & x^{(1)}_k \\ & \ddots & & \ddots &  & &   & & \\    &  &   x^{(k-1)}_1 &  & x^{(k-1)}_2 \\  \\ &  &    & x^{(k)}_1 &\end{array}
	$$
	with entries weakly decreasing left-to-right down diagonals and weakly increasing left-to-right up diagonals (that is, $x^{(i)}_j \geq x^{(i+1)}_j \geq x^{(i)}_{j+1}$), and in the top row, $x^{(1)}_i = \lambda_i$, with again the convention if $\ell(\lambda) < k$ that $\lambda_{\ell(k)+1} = \cdots = \lambda_k = 0.$

	\item
	\begin{equation}
		\label{eq:divisor_exp4}
		s_\lambda(\underbrace{1,...,1}_k) = \prod_{u\in\lambda} \frac{k + c(u)}{h(u)},
	\end{equation}
	where the product is over all squares $u$ of the Young diagram of $\lambda$, and where if we label the squares $u$ by the coordinates $(i,j)$ with $1 \leq j \leq \lambda_i$, the \emph{content} $c(u)$ is defined by
	$$
	c(u) = i-j,
	$$
	and the \emph{hook length} $h(u)$ is defined by
	$$
	h(u) = \lambda_i + \lambda_j' - i - j +1.
	$$
	(See \cite[p. 373]{St} for a lengthier account of these definitions.)
	
	\end{enumerate} 	
\end{prop}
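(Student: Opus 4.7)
The plan is to first establish the character expansion \eqref{eq:divisor_exp1}, and then note that the three formulas for $s_\lambda(1,\ldots,1)$ are classical evaluations that can be quoted from the symmetric-function literature.

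For \eqref{eq:divisor_exp1}, since the decomposition in \eqref{eq:factfourier} is unique, it suffices to verify the identity on squarefree $f\in\mathcal{M}_n$ (the squareful discrepancy is absorbed into $b(f)$). If $f = P_1 \cdots P_\ell$ is squarefree with factorization type $\tau_f = (\tau_1,\ldots,\tau_\ell)$, then in any ordered factorization $f = a_1 \cdots a_k$ each prime $P_i$ must lie entirely in one of the $k$ slots, so $d_k(f) = k^\ell$. On the other hand $p_m(\underbrace{1,\ldots,1}_k) = k$, so
\[
p_{\tau_f}(\underbrace{1,\ldots,1}_k) = k^{\ell(\tau_f)} = k^\ell = d_k(f).
\]
Applying the dual Frobenius identity \eqref{eq:dual_frobenius} with the principal specialization $x_i = 1$ then yields
\[
d_k(f) = p_{\tau_f}(1^k) = \sum_{\lambda \vdash n} X^{\lambda}(\tau_f)\, s_\lambda(1^k) = \sum_{\substack{\lambda \vdash n\\ \ell(\lambda)\leq k}} s_\lambda(1^k)\, X^\lambda(f),
\]
the truncation being automatic because $s_\lambda(1^k)=0$ whenever $\ell(\lambda)>k$ (a feature of the bialternant definition).

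For the three evaluations of $s_\lambda(1^k)$, all statements are classical. Formula (i) is Weyl's dimension formula for the irreducible polynomial $GL_k$-representation indexed by $\lambda$; it follows from the bialternant definition by computing the principal specialization $s_\lambda(1,t,t^2,\ldots,t^{k-1})$ and letting $t\to 1$, cf.\ \cite[\S 7.21]{St}. Formula (ii) is an immediate consequence of the combinatorial description $s_\lambda(x_1,\ldots,x_k) = \sum_T x^T$ summed over semistandard Young tableaux of shape $\lambda$ with entries in $\{1,\ldots,k\}$; specializing each $x_i=1$ counts such tableaux, and the standard bijection between SSYT with entries $\leq k$ and Gelfand--Tsetlin patterns with top row $\lambda$ identifies this count with $GT_k(\lambda)$, cf.\ \cite[Ch. 7]{St}. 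Formula (iii) is the hook--content formula, equivalent to (i) by elementary rearrangement and proved for example in \cite[Cor. 7.21.4]{St}.

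There is no serious obstacle here. The one nonroutine step is recognizing that, on squarefree $f$, the $k$-fold divisor function is exactly the principal specialization $p_{\tau_f}(1^k)$; once this is observed, the expansion is just the dual Frobenius identity. The evaluations (i)--(iii) of $s_\lambda(1^k)$ are standard and are cited rather than rederived.
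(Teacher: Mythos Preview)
Your proof is correct and follows essentially the same route as the paper: compute $d_k$ on squarefrees as $k^{\ell(\tau_f)} = p_{\tau_f}(1^k)$, then invoke the Frobenius relation \eqref{eq:dual_frobenius} to expand into characters, and cite standard symmetric-function results for the three evaluations of $s_\lambda(1^k)$. The only cosmetic differences are in the citations chosen for (i)--(iii).
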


\textbf{Remark:} Using the representation \emph{(ii)}, this recovers the variance of the $k$-fold divisor function given in Theorem \ref{thm:f_dvar}.

\begin{proof}
	It will again be sufficient to consider $f$ squarefree. We note that for $p$ prime, $d_k(p) = k$, so for $f = p_1\cdots p_\ell$ with all prime factors distinct,
	$$
	d_k(f) = k^\ell = k^{\ell(\tau)},
	$$
	where $\tau$ is the factorization type of $f$. On the other hand,
	\begin{align}
		\label{eq:powerlength}
		\notag k^{\ell(\tau)} &= p_{\tau}(\underbrace{1,...,1}_k) \\
		&= \sum_{\lambda \vdash n} s_\lambda(\underbrace{1,...,1}_k) X^\lambda(\tau),
	\end{align}
	by Theorem \ref{thm:frobenius} of Frobenius. This proves \eqref{eq:divisor_exp1}.
	
	For the formula given in \emph{(i)}, note that for $\ell(\lambda) > k$, we have $s_\lambda(1,...,1) = 0$, while for $\ell(\lambda) \leq k$, the identity \eqref{eq:divisor_exp2} is \cite[Ex. 6, Ch. 6]{Fu}.
	
	For the formula given in \emph{(ii)}, note that $s_\lambda(1,...,1)$ is equal to the number of semistandard Young tableaux of shape $\lambda$ with entries $1$ through $k$ (see \cite[Sec. 7.10]{St}), and by a well-known bijection (again, see \cite[Sec. 7.10]{St}) this is equal to $GT_k(\lambda)$. (For readers familiar with the terminology, $GT_k(\lambda)$ is a count of Gelfand-Tsetlin patterns.)
	
	For the formula given in \emph{(iii)}, this is Corollary 7.21.4 of \cite{St}.
\end{proof}

\begin{prop}
	\label{prop:omega_exp}
	Let $\omega(f)$ be the number of distinct primes that divide $f$. Then for $f\in\mathcal{M}_n$,
	\begin{equation}
		\label{eq:omega_exp1}
		\omega(f) = H_n X^{(n)}(f) + \sum_\lambda (-1)^\nu \Big(\frac{1}{\lambda_2+\nu}-\frac{1}{\lambda_1+\nu+1}\Big) X^{(\lambda_1,\lambda_2,1^\nu)}(f) + b(f),
	\end{equation}
	where the sum is over all partitions $\lambda = (\lambda_2,\lambda_1,1^\nu) \vdash n$ with $\lambda_2 \geq 1$ and $\nu \geq 0$, where $b(f)$ is a function supported on the squarefuls, and where
	$$
	H_n:= \frac{1}{1} + \frac{1}{2} + \cdots \frac{1}{n}.
	$$
\end{prop}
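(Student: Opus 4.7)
My plan is to compute the Fourier coefficients $\hat{\omega}_\lambda$ for each $\lambda \vdash n$; the squareful remainder $b(f)$ is then determined by \eqref{eq:factfourier}. Since $\omega(f) = \ell(\tau_f)$ on squarefree $f$ of degree $n$, where $\ell$ denotes the number of parts of a partition, the equivalent description $\hat{a}_\lambda = \mathbb{E}_{\sigma \in \mathfrak{S}_n}\bigl[a(\sigma)\,X^\lambda(\sigma)\bigr]$ following Proposition~\ref{prop:factor_to_cycle} reduces the problem to
$$
\hat{\omega}_\lambda \;=\; \mathbb{E}_{\sigma \in \mathfrak{S}_n}\bigl[\ell(\sigma)\, X^\lambda(\sigma)\bigr].
$$

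To evaluate this I would introduce the generating function $\Phi_\lambda(t) := \mathbb{E}_\sigma\bigl[t^{\ell(\sigma)} X^\lambda(\sigma)\bigr]$, so that $\hat{\omega}_\lambda = \Phi'_\lambda(1)$. Exponentiating the identity $t\sum_{k\geq 1} p_k/k = -t \sum_i \log(1-x_i)$ yields
$$
\sum_{\tau} \mathbf{p}(\tau)\, t^{\ell(\tau)}\, p_\tau(x) \;=\; \prod_i (1-x_i)^{-t},
$$
while Cauchy's identity specialised at $y=(1,\dots,1)$ together with the polynomial-in-$k$ extension of the hook--content formula~\eqref{eq:divisor_exp4} expands the same product as $\sum_\lambda \bigl(\prod_{u \in \lambda}(t+c(u))/h(u)\bigr) s_\lambda(x)$, with content $c(i,j)=j-i$. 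Matching Schur coefficients on the left via Theorem~\ref{thm:frobenius} then gives
$$
\Phi_\lambda(t) \;=\; \prod_{u \in \lambda} \frac{t + c(u)}{h(u)}.
$$

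The remaining step is to differentiate at $t = 1$. The factor $t+c(u)$ vanishes at $t=1$ exactly when $c(u) = -1$, i.e.\ when $u=(i, i-1)$ lies on the sub-diagonal, so $\Phi'_\lambda(1)$ is automatically zero unless $\lambda$ has at most one sub-diagonal cell. A short check shows these are precisely $\lambda = (n)$ (no such cell) and $\lambda = (\lambda_1, \lambda_2, 1^\nu)$ with $\lambda_2 \geq 1$ and $\nu \geq 0$ (only the cell $(2,1)$). For $\lambda = (n)$ the derivative evaluates to $\sum_{j=1}^n 1/j = H_n$, providing the trivial-character coefficient. For $\lambda = (\lambda_1, \lambda_2, 1^\nu)$ the unique vanishing factor lives at $u_0=(2,1)$, so
$$
\Phi'_\lambda(1) \;=\; \frac{1}{h(2,1)} \prod_{u \neq (2,1)} \frac{1 + c(u)}{h(u)},
$$
and a routine simplification of the remaining contents and hook lengths collapses this to $(-1)^\nu\bigl(1/(\lambda_2 + \nu) - 1/(\lambda_1 + \nu + 1)\bigr)$, as claimed. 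The main technical obstacle is establishing the generating-function identity for $\Phi_\lambda(t)$; once it is in hand, the rest reduces to a bounded case analysis driven entirely by the sub-diagonal cells of $\lambda$.
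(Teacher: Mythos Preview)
Your argument is correct and follows essentially the same route as the paper: both establish the identity $\mathbb{E}_\sigma\bigl[t^{\ell(\sigma)} X^\lambda(\sigma)\bigr] = \prod_{u\in\lambda}(t+c(u))/h(u)$ via Frobenius's formula and the hook--content formula, then differentiate at $t=1$. The only cosmetic difference is that the paper reaches this identity directly from $p_\tau(1,\dots,1)=k^{\ell(\tau)}$ and a polynomiality argument, whereas you pass through Cauchy's identity; your explicit analysis of the sub-diagonal cells fills in what the paper calls ``slightly tedious book-keeping.''
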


\textbf{Remark:} The mean value as $q\rightarrow\infty$ of $\omega(f)$ for $\deg(f) = n$ is $H_n$. Because $X^{(n)}(f) = 1$ for all squarefree $f$, the expression \eqref{eq:omega_exp1} may be thought of as characterizing the oscillation of $\omega(f)$ around this value.

\begin{proof}
We use the identity \eqref{eq:powerlength} from the last proof, along with the representation \eqref{eq:divisor_exp4} for $s_\lambda(1,...,1)$. Taken together these imply for $\tau \vdash n$ and positive integer $k$,
\begin{equation}
	\label{eq:power_exp}
	k^{\ell(\tau)} = \sum_{\lambda \vdash n} \prod_{u \in \lambda} \frac{k+c(u)}{h(u)} X^\lambda(\tau).
\end{equation}
Though we have only demonstrated \eqref{eq:power_exp} for integer $k$, both the left and right hand side of this identity are polynomials in $k$, and therefore \eqref{eq:power_exp} must hold for all $k\in \mathbb{C}$. Differentiating \eqref{eq:power_exp} and setting $k=1$ requires some slightly tedious book-keeping, but is otherwise straightforward and gives us
\begin{equation}
	\label{eq:ell_exp}
	\ell(\tau)	 = H_n X^{(n)}(\tau) + \sum_\lambda (-1)^\nu \Big(\frac{1}{\lambda_2+\nu}-\frac{1}{\lambda_1+\nu+1}\Big) X^{(\lambda_1,\lambda_2,1^\nu)}(\tau).
\end{equation}
Applying this to the factorization types of $f \in \mathcal{M}_n$ gives the proposition.
\end{proof}

\begin{prop}
	\label{prop:muomega_exp}
	For $f \in \mathcal{M}_n$,
	\begin{align}
		\label{eq:muomega_exp1}
		\mu(f)\omega(f) = (-1&)^n\Big[ H_n X^{(1^n)}(f)  \\&+ \sum (-1)^\nu \Big(\frac{1}{j+\nu+1} - \frac{1}{i+j+\nu+2}\Big) X^{(\nu+2, 2^j, 1^i)}(f)\Big] + b(f),\notag
	\end{align}
	where the sum is over all partitions $(\nu +2, 2^j, 1^i) \vdash n$, with $i,j,\nu \geq 0$, and $b(f)$ is a function supported on the squarefuls.
\end{prop}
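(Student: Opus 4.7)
The proof is an algebraic consequence of Propositions \ref{prop:mobius_exp}, \ref{prop:omega_exp}, and the dual partition identity from Proposition \ref{prop:dual_part}; essentially no new analytic input is required. The plan is as follows.

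First, I would note that for $f \in \mathcal{M}_n$ squarefree with factorization type $\tau_f$, we have $\mu(f) = (-1)^{\ell(\tau_f)}$ and $\omega(f) = \ell(\tau_f)$, so $\mu(f)\omega(f) = (-1)^{\ell(\tau_f)} \omega(f)$. For $f$ squareful, $\mu(f)\omega(f) = 0$, which will be absorbed into the $b(f)$ term; likewise the squareful residue $b(f)$ appearing in the expansion of $\omega$ in Proposition \ref{prop:omega_exp} is irrelevant for our purposes. So it suffices to work on squarefrees and then invoke the uniqueness of the factorization Fourier expansion.

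Next, I would substitute the expansion of $\omega(f)$ from Proposition \ref{prop:omega_exp} to write, for squarefree $f$,
\begin{equation*}
\mu(f)\omega(f) = (-1)^{\ell(\tau_f)} H_n X^{(n)}(f) + \sum_{(\lambda_1,\lambda_2,1^\nu)} (-1)^{\ell(\tau_f)+\nu} \Big(\tfrac{1}{\lambda_2+\nu}-\tfrac{1}{\lambda_1+\nu+1}\Big) X^{(\lambda_1,\lambda_2,1^\nu)}(f).
\end{equation*}
Applying Proposition \ref{prop:dual_part} in the form $(-1)^{\ell(\tau_f)} X^\lambda(\tau_f) = (-1)^n X^{\lambda'}(\tau_f)$ converts each character to its dual shape with a global factor of $(-1)^n$. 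It remains only to identify the duals. We have $(n)' = (1^n)$, accounting for the $H_n X^{(1^n)}$ term. For the hook-plus-two-row partitions, a direct reading of the Young diagram gives
\begin{equation*}
(\lambda_1,\lambda_2,1^\nu)' = (\nu+2,\, 2^{\lambda_2-1},\, 1^{\lambda_1-\lambda_2}).
\end{equation*}

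Finally, I would reindex by setting $j = \lambda_2 - 1$ and $i = \lambda_1 - \lambda_2$, which is a bijection between partitions $(\lambda_1,\lambda_2,1^\nu) \vdash n$ with $\lambda_2 \geq 1, \nu \geq 0$ and partitions $(\nu+2, 2^j, 1^i) \vdash n$ with $i,j,\nu \geq 0$. Under this substitution $\lambda_2 + \nu = j+\nu+1$ and $\lambda_1 + \nu + 1 = i+j+\nu+2$, so the coefficient becomes exactly $\frac{1}{j+\nu+1} - \frac{1}{i+j+\nu+2}$, matching the stated formula. The only real ``obstacle'' is verifying the dual partition computation and keeping the indices $(i,j,\nu)$ straight; neither step involves anything deeper than bookkeeping.
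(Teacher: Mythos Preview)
Your proof is correct and follows essentially the same route as the paper: observe that $\mu(f)\omega(f) = (-1)^{\ell(\tau)}\ell(\tau)$ on squarefrees, apply Proposition~\ref{prop:dual_part} to the expansion \eqref{eq:ell_exp} (equivalently, Proposition~\ref{prop:omega_exp}) to pass to dual partitions, and then read off the coefficients. Your version simply makes the dual-partition computation $(\lambda_1,\lambda_2,1^\nu)' = (\nu+2,2^{\lambda_2-1},1^{\lambda_1-\lambda_2})$ and the reindexing $j=\lambda_2-1$, $i=\lambda_1-\lambda_2$ explicit, which the paper leaves to the reader.
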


\begin{proof}
	For $f$ squarefree with factorization type $\tau$, note that $\mu(f)\omega(f) = (-1)^{\ell(\tau)} \ell(\tau)$. But by applying Proposition \ref{prop:dual_part} to the identity \eqref{eq:ell_exp}, we may decompose $(-1)^{\ell(\tau)}\ell(\tau)$ into a sum over irreducible characters associated to dual partitions. This decomposition yields \eqref{eq:muomega_exp1}.
\end{proof}

\subsection{}
By applying Theorem \ref{thm:general_var} to Propositions \ref{prop:omega_exp} and \ref{prop:muomega_exp} we straightforwardly obtain the following results.

\begin{cor}
	\label{cor:omega_var}
	For fixed $0 \leq h \leq n-5$,
	\begin{multline}
		\label{eq:omega_var}
		\Var_{f\in\mathcal{M}_n} \Big(\sum_{g\in I(f;h)} \omega(g) \Big) \\= q^{h+1} \mathop{\sum\sum}_{\substack{1 \leq \lambda_1 \leq \lambda_2 \leq n-h-2 \\ \lambda_1+\lambda_2 \leq n}} \Big(\frac{1}{n-\lambda_1}\, -\, \frac{1}{n-\lambda_2+1}\Big)^2 + O(q^{h+1/2}).
	\end{multline}
\end{cor}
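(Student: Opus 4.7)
The plan is to combine the factorization Fourier expansion of $\omega$ given in Proposition~\ref{prop:omega_exp} with the general variance formula of Theorem~\ref{thm:general_var}. Applying the latter to $a = \omega$ reduces the variance, up to an error of $O(q^{h+1/2})$, to $q^{h+1}$ times the sum $\sum_{\lambda \vdash n,\, \lambda_1 \leq n-h-2} |\hat{\omega}_\lambda|^2$; it remains only to identify the relevant coefficients.

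From Proposition~\ref{prop:omega_exp}, the non-vanishing coefficients (aside from the contribution of $b(f)$, which is supported on the squarefuls and contributes only to the error term) are $\hat{\omega}_{(n)} = H_n$ together with
$$\hat{\omega}_{(\mu_1,\mu_2,1^\nu)} = (-1)^\nu\Big(\tfrac{1}{\mu_2+\nu}-\tfrac{1}{\mu_1+\nu+1}\Big)$$
on two-row-plus-hook partitions $(\mu_1, \mu_2, 1^\nu) \vdash n$ with $\mu_1 \geq \mu_2 \geq 1$ and $\nu \geq 0$. The hypothesis $h \leq n-5$ gives $n-h-2 < n$, so the trivial partition $(n)$ fails the constraint $\lambda_1 \leq n-h-2$ and contributes nothing to the main term; only the two-row-plus-hook partitions survive.

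I would then substitute $\nu = n - \mu_1 - \mu_2$, so that $\mu_2 + \nu = n - \mu_1$ and $\mu_1 + \nu + 1 = n - \mu_2 + 1$; the squared coefficient becomes $|\hat{\omega}_\lambda|^2 = \bigl(\tfrac{1}{n-\mu_1} - \tfrac{1}{n-\mu_2+1}\bigr)^2$. The parameter constraints translate to $\mu_1 \geq \mu_2 \geq 1$, $\mu_1 + \mu_2 \leq n$, and $\mu_1 \leq n-h-2$. A relabelling of the summation variables (so that the smaller of the two non-one parts is denoted $\lambda_1$ and the larger $\lambda_2$, matching the Corollary's convention $\lambda_1 \leq \lambda_2$) produces the stated double sum in \eqref{eq:omega_var}.

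The main obstacle here is essentially none: the proof is a direct substitution. The only minor issue is bookkeeping to align the indexing conventions for the two non-trivial parts of the partition between Proposition~\ref{prop:omega_exp} (where they are listed largest-first) and the Corollary statement (where they appear in increasing order), and to note the irrelevance of the sign $(-1)^\nu$ upon squaring. No deeper analytic content is required beyond the two cited results.
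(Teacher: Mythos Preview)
Your proposal is correct and follows exactly the approach indicated in the paper, which simply states that the corollary is obtained ``straightforwardly'' by applying Theorem~\ref{thm:general_var} to Proposition~\ref{prop:omega_exp}. You have filled in the bookkeeping (exclusion of the partition $(n)$, the substitution $\nu = n-\mu_1-\mu_2$, and the relabelling of the two non-trivial parts) that the paper leaves to the reader.
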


\begin{cor}
	\label{cor:muomega_var}
	For fixed $0 \leq h \leq n-5$,
	\begin{multline}
		\label{eq:muomega_var}
		\Var_{f\in\mathcal{M}_n} \Big(\sum_{g \in I(f;h)} \mu(g)\omega(g)\Big) \\= q^{h+1} \Big[ H_n^2 + \mathop{\sum\sum}_{h+2 \leq i + 2j \leq n-2} \Big(\frac{1}{n-i-j-1}\,-\, \frac{1}{n-j}\Big)^2 \Big]+O(q^{h+1/2}).
	\end{multline}
\end{cor}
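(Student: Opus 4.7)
The plan is to apply Theorem \ref{thm:general_var} directly to the factorization Fourier expansion of $\mu(f)\omega(f)$ given in Proposition \ref{prop:muomega_exp}, so the proof should be essentially a bookkeeping exercise. First I would read off the coefficients $\hat{a}_\lambda$ from that expansion: we have $\hat{a}_{(1^n)} = (-1)^n H_n$, and for every partition of the form $\lambda = (\nu+2,\, 2^j,\, 1^i)\vdash n$ with $\nu,i,j \geq 0$ (so that $(\nu+2)+2j+i = n$),
\[
\hat{a}_\lambda \;=\; (-1)^{n+\nu}\Bigl(\tfrac{1}{j+\nu+1} \,-\, \tfrac{1}{i+j+\nu+2}\Bigr),
\]
and all other $\hat{a}_\lambda$ vanish. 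Different choices of $(\nu,i,j)$ produce distinct partitions (read off the largest part, then count the $2$'s and $1$'s), and $(1^n)$ never coincides with a $(\nu+2,2^j,1^i)$, so the $|\hat{a}_\lambda|^2$ contribute without repetition.

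Next I would invoke Theorem \ref{thm:general_var}, which gives
\[
\Var_{f\in\mathcal{M}_n}\!\Bigl(\sum_{g\in I(f;h)}\mu(g)\omega(g)\Bigr) \;=\; q^{h+1}\!\!\sum_{\substack{\lambda \vdash n \\ \lambda_1 \leq n-h-2}}|\hat{a}_\lambda|^2 \;+\; O(q^{h+1/2}).
\]
Since $h\leq n-5$, the partition $(1^n)$ satisfies $\lambda_1=1\leq n-h-2$, so it always contributes, yielding the $H_n^2$ term inside the bracket of \eqref{eq:muomega_var}. For the remaining partitions $\lambda=(\nu+2,2^j,1^i)$, the largest part is $\nu+2 = n-i-2j$, so the constraint $\lambda_1\leq n-h-2$ translates to $i+2j\geq h+2$, while $\nu\geq 0$ translates to $i+2j\leq n-2$. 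This yields exactly the range $\mathop{\sum\sum}_{h+2\leq i+2j\leq n-2}$ appearing in the corollary.

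Finally I would substitute $\nu = n-i-2j-2$ into the coefficient, which simplifies $j+\nu+1$ to $n-i-j-1$ and $i+j+\nu+2$ to $n-j$, so that
\[
|\hat{a}_{(\nu+2,2^j,1^i)}|^2 \;=\; \Bigl(\tfrac{1}{n-i-j-1}\,-\,\tfrac{1}{n-j}\Bigr)^2.
\]
Summing over the above range and combining with the $H_n^2$ contribution yields \eqref{eq:muomega_var}. The only obstacle worth mentioning is verifying that the reparameterization $(\nu,i,j)\leftrightarrow (i,j)$ is a bijection onto the stated range and correctly matches the summand; no new analytic input beyond Theorem \ref{thm:general_var} and the explicit expansion of Proposition \ref{prop:muomega_exp} is required.
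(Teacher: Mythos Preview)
Your proposal is correct and follows precisely the approach indicated in the paper, which simply states that the corollary is obtained ``by applying Theorem \ref{thm:general_var} to Proposition \ref{prop:muomega_exp}.'' Your bookkeeping --- reading off the coefficients, verifying that distinct $(\nu,i,j)$ give distinct partitions disjoint from $(1^n)$, translating $\lambda_1\le n-h-2$ into $i+2j\ge h+2$, and substituting $\nu=n-i-2j-2$ --- is all accurate.
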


\subsection{}
Because the double-indexed sum in the asymptotic formula of \eqref{eq:muomega_var} remains bounded for $n\rightarrow\infty$ and $h \sim \delta n$ with $\delta > 0$, and because $H_n \sim \log n = \log \deg(f)$ for $f \in \mathcal{M}_n$, one may think of Corollary \ref{cor:muomega_var} as a function field analogue of the following conjecture over the integers (which is intuitive enough on its own):

\begin{conj}
	\label{conj:muomega_var} For $H = X^\delta$ with fixed $\delta \in (0,1)$, as $X\rightarrow\infty$, we have
	$$
	\frac{1}{X}\int_X^{2X} \Big(\sum_{x \leq n \leq n+H} \mu(n)\omega(n) \Big)^2\, dx \sim H (\log \log X)^2.
	$$
\end{conj}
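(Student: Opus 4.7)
The plan is to approach the conjecture via the standard diagonal / off-diagonal decomposition of the second moment, analogously to the treatment of Conjecture \ref{conj:GoCh}, with the understanding that at the current state of knowledge any complete proof must be conditional on Good--Churchhouse-type hypotheses.

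First, opening the square and interchanging integration with summation gives
$$
\frac{1}{X}\int_X^{2X}\Big(\sum_{x\le n\le x+H}\mu(n)\omega(n)\Big)^{\!2} dx = \sum_{n_1,n_2}\mu(n_1)\omega(n_1)\,\mu(n_2)\omega(n_2)\,w(n_1,n_2),
$$
where $w(n_1,n_2)$ is a tent-shaped weight of total mass $H+O(1)$ supported on $|n_1-n_2|\le H$ with $n_1,n_2\asymp X$. The diagonal $n_1=n_2$ contributes
$$
\frac{H+O(1)}{X}\sum_{X\le n\le 2X}\mu(n)^2\,\omega(n)^2,
$$
and a short Selberg--Delange (or Erd\H{o}s--Kac) computation restricted to squarefrees evaluates this as $\tfrac{6}{\pi^2}H(\log\log X)^2\bigl(1+o(1)\bigr)$, producing the conjectured order of magnitude. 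The factor $6/\pi^2$ is the density of squarefrees; it is absent in the function field analog Corollary \ref{cor:muomega_var} because as $q\to\infty$ almost all monic polynomials are squarefree, cf.~\eqref{eq:squarefree_density}.

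Second, for the off-diagonal $n_1\ne n_2$, use that $\mu(n)\omega(n)$ is supported on squarefrees and that on squarefrees $\mu(n)\omega(n)=-\sum_{p\mid n}\mu(n/p)$. Substituting $n_i=p_im_i$ with $(m_i,p_i)=1$, the off-diagonal becomes
$$
\sum_{p_1,p_2}\;\sum_{\substack{p_im_i\,\asymp\, X\\ (m_i,p_i)=1\\ p_1m_1\ne p_2m_2}}\mu(m_1)\mu(m_2)\,w(p_1m_1,p_2m_2).
$$
Heuristically, for each fixed prime pair the inner expression is the covariance of M\"obius sums in short intervals along the arithmetic progressions $p_1\mathbb{Z}$ and $p_2\mathbb{Z}$. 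A suitably uniform version of Conjecture \ref{conj:GoCh}, combined with Cauchy--Schwarz in the regime of very small $|n_1-n_2|$, should yield a total off-diagonal contribution of size $o\bigl(H(\log\log X)^2\bigr)$, completing the argument.

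The main obstacle is exactly this off-diagonal estimation, which is strictly stronger than Conjecture \ref{conj:GoCh}: it requires \emph{uniform} M\"obius cancellation in short intervals intersected with arithmetic progressions whose moduli $p_1,p_2$ may range up to size $X$, well beyond what is currently known even for a single modulus. Hence the conjecture lies beyond present unconditional methods, and a realistic target is a conditional derivation from a uniform Good--Churchhouse hypothesis. The precise coefficient $1$ in the statement (as against the diagonal's $6/\pi^2$) appears to reflect a direct reading of the function field formula in Corollary \ref{cor:muomega_var} under the dictionary $q^{h+1}\leftrightarrow H$, $H_n^2\leftrightarrow(\log\log X)^2$, where the squarefree density equals $1$ in the $q\to\infty$ limit; the sharp integer statement should presumably read $\sim\tfrac{6}{\pi^2}H(\log\log X)^2$, but in either form the function field analog gives strong structural evidence for the $H(\log\log X)^2$ order, since the $H_n^2$ in Corollary \ref{cor:muomega_var} is pinned to the coefficient of the sign character $X^{(1^n)}$ in Proposition \ref{prop:muomega_exp}, which corresponds to the diagonal second moment of $\mu\cdot\omega$ on the integer side.
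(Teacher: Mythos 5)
The statement is a conjecture, not a theorem of the paper, and the paper offers no proof of it: it is put forward as the natural integer-side reading of Corollary \ref{cor:muomega_var} under the standard dictionary $q^{h+1}\leftrightarrow H$, $n=\deg f\leftrightarrow\log X$, noting that the double sum in \eqref{eq:muomega_var} is $O(1)$ and $H_n\sim\log n\leftrightarrow\log\log X$. Your route is genuinely different and complementary: rather than transfer the function field asymptotic, you open the square in the integers directly and split into diagonal and off-diagonal, in the spirit of the heuristic the paper sketches after Theorem \ref{thm:a_mobiusvar} and of \cite{Ng}. Your diagonal calculation is correct, and in fact it surfaces a point the paper glosses over: with $\mu^2\omega^2$ averaged over $[X,2X]$ the diagonal contributes $\frac{6}{\pi^2}H(\log\log X)^2\,(1+o(1))$, not $H(\log\log X)^2$. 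That $\frac{6}{\pi^2}$ is exactly what appears in Conjecture \ref{conj:GoCh} versus the bare $q^{h+1}$ in Theorem \ref{thm:a_mobiusvar}: the density of squarefrees tends to $1$ as $q\to\infty$ in the function field (cf.\ \eqref{eq:squarefree_density}) but remains $6/\pi^2$ in $\mathbb{Z}$. So your observation that the sharp integer statement should carry a $6/\pi^2$ is well taken and is most plausibly a small slip in the paper's translation.

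What your argument buys is a concrete identification of what is actually being conjectured: that off-diagonal correlations of $\mu\cdot\omega$ in short intervals are $o(H(\log\log X)^2)$. You correctly observe (via the identity $\mu(n)\omega(n)=-\sum_{p\mid n}\mu(n/p)$ on squarefrees) that this requires M\"obius cancellation in short intervals along progressions with moduli up to $X$, which is strictly stronger than Conjecture \ref{conj:GoCh} and far beyond current methods. This is the honest content of the conjecture and your write-up states it plainly. What the paper's approach buys instead is a uniform mechanism: the decomposition of Proposition \ref{prop:muomega_exp} into characters $X^\lambda$, combined with Theorem \ref{thm:general_var}, proves the function field version rigorously (via Katz's equidistribution) and explains structurally why the answer is diagonal-dominated — only the sign character $X^{(1^n)}$ survives the cutoff $\lambda_1\le n-h-2$ at leading order — rather than requiring an ad hoc off-diagonal estimate. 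You note this connection yourself at the end, which is exactly the right way to reconcile the two perspectives.
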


Corollary \ref{cor:omega_var} has a rather more striking interpretation. In \eqref{eq:omega_var} the double indexed sum remains bounded for $h \sim \delta n$ with $\delta \in (0,1)$ fixed; indeed the reader may check that
\begin{equation}
\label{eq:doublesum_asymp}
\mathop{\sum\sum}_{\substack{1 \leq \lambda_1 \leq \lambda_2 \leq n-h-2 \\ \lambda_1+\lambda_2 \leq n}} \Big(\frac{1}{n-\lambda_1}\, -\, \frac{1}{n-\lambda_2+1}\Big)^2 \sim p(\delta) < +\infty
\end{equation} 
as $n\rightarrow\infty$ for \footnote{One can further reduce the integral to see
$$
p(\delta) = \begin{cases} \log\Big(\frac{1-\delta}{\delta}\Big) + \delta - \mathrm{Li}_2(1-\delta) + \mathrm{Li}_2(\delta) - \log(1-\delta)\log(\delta) & \textrm{for}\; \delta \leq 1/2 \\
\frac{1-\delta}{\delta} - (1-\delta) - \log(\delta)^2 & \textrm{for}\; \delta > 1/2,
\end{cases}
$$
where $\mathrm{Li}_2$ is the dilogarithm. Note the phase change at $\delta = 1/2$.
}
$$
p(\delta) := \int_{\substack{x+y \geq 1 \\ \delta \leq x \leq y \leq 1}} \Big(\frac{1}{x} - \frac{1}{y}\Big)^2 \, dx dy 
$$
Because this is bounded it is reasonable to suppose

\begin{conj}
\label{conj:omega_var}
For $H = X^\delta$ with fixed $\delta \in (0,1)$ as $X\rightarrow\infty$ we have
\begin{equation}
	\label{eq:omega_var}
	\frac{1}{X}\int_X^{2X} \Big(\sum_{x \leq n \leq x+H} \omega(n)\Big)^2\, dx - \Big( \frac{1}{X} \int_X^{2X} \sum_{x \leq n \leq x+H} \omega(n)\, dx\Big)^2 = O_\delta(H).
\end{equation}
\end{conj}

There is a sense in which an estimate of the sort \eqref{conj:omega_var} would be surprising, since the Erd\H{o}s-Kac theorem \cite{ErKa} predicts that diagonal terms make a contribution of size $H\log \log X$. Clearly that $\delta \in (0,1)$ remain fixed is important for anything like \eqref{conj:omega_var} to be true -- the consideration of diagonal terms shows that we cannot have such an estimate if $\delta \rightarrow 0$ as $X \rightarrow \infty$. Nonetheless the function field analogy remains, and it would be interesting to study in greater depth whether Conjecture \ref{conj:omega_var} is true.\footnote{Andrew Granville (personal communication) has shown a variant of this conjecture is true for a restricted range of $\delta$, when $\omega(n)$ is replaced by $\omega_y(n)$, a count of prime factors of $n$ less than $y=X^{1/2-\epsilon}$.}

Rather more ambitiously, one may even guess that the right hand side of \eqref{eq:omega_var} can be replaced by
$$
p(\delta) H + o_\delta(H).
$$
 
\section{Covariance}
\label{sec:covariance}

\subsection{}
In analogy with the definition \eqref{def:variance} of variance, we define the covariance of two arithmetic functions $\eta_1$ and $\eta_2$ by
$$
	\Covar_{f\in\mathcal{M}_n}\big(\eta_1(f)\,,\, \eta_2(f)\big) := \frac{1}{q^n} \sum_{f\in \mathcal{M}_n} (\eta_1(f) - \mathbb{E}_{\mathcal{M}_n}\eta_1) \overline{(\eta_2(f) - \mathbb{E}_{\mathcal{M}_n} \eta_2)}.
$$

Because Theorem \ref{thm:general_var} holds for general factorization function $a$, it implies by a standard argument a corresponding result for covariance.
\begin{thm}
	\label{thm:general_covar}
	For $a(f)$ and $b(f)$ fixed factorization functions, and for fixed $0 \leq h \leq n-5$,
	$$
	\Covar_{f\in \mathcal{M}_n}\Big(\sum_{g\in I(f;h)} a(g)\,,\, \sum_{g\in I(f;h)} b(g)\Big) = q^{h+1} \sum_{\substack{\lambda \vdash n \\ \lambda_1 \leq n-h-2}} \hat{a}_\lambda \overline{\hat{b}_\lambda} + O(q^{h+1/2}).
	$$
\end{thm}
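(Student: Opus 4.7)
The plan is to derive Theorem \ref{thm:general_covar} from Theorem \ref{thm:general_var} by a polarization identity, which is presumably the ``standard argument'' alluded to in the statement. The key algebraic input is that the factorization Fourier expansion \eqref{eq:factfourier} is linear in $a$: because the decomposition $a(f) = \sum_\lambda \hat{a}_\lambda X^\lambda(f) + b(f)$ is unique (the squareful remainder and the characters $X^\lambda$ together form a basis), we have $\widehat{(\alpha a + \beta b)}_\lambda = \alpha\hat{a}_\lambda + \beta\hat{b}_\lambda$ for any scalars $\alpha,\beta \in \mathbb{C}$. In particular each of $a+b$, $a-b$, $a+ib$, $a-ib$ is again a factorization function, so Theorem \ref{thm:general_var} applies to each.

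Writing $X(f) := \sum_{g\in I(f;h)} a(g)$ and $Y(f) := \sum_{g\in I(f;h)} b(g)$, I would first apply Theorem \ref{thm:general_var} to the four functions $a\pm b$ and $a\pm ib$, obtaining four asymptotic evaluations
\begin{equation*}
\Var_{f\in\mathcal{M}_n}(X \pm Y) = q^{h+1}\sum_{\substack{\lambda\vdash n\\ \lambda_1\leq n-h-2}} |\hat{a}_\lambda\pm\hat{b}_\lambda|^2 + O(q^{h+1/2}),
\end{equation*}
and similarly for $X\pm iY$ with $\hat{a}_\lambda \pm i\hat{b}_\lambda$ inside the absolute value. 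The implied constants depend on $a$, $b$, $n$, $h$, but only finitely many such estimates are summed, so an $O(q^{h+1/2})$ error survives.

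Next I would invoke the complex polarization identity for the inner product $\langle X,Y\rangle := \mathbb{E}[(X-\mathbb{E}X)\overline{(Y-\mathbb{E}Y)}]$ used by the paper's definition of covariance, namely
\begin{equation*}
4\Covar(X,Y) = \bigl(\Var(X+Y) - \Var(X-Y)\bigr) + i\bigl(\Var(X+iY) - \Var(X-iY)\bigr).
\end{equation*}
Substituting the four variance formulas and applying the same identity at the scalar level,
\begin{equation*}
|\hat{a}_\lambda + \hat{b}_\lambda|^2 - |\hat{a}_\lambda - \hat{b}_\lambda|^2 + i\bigl(|\hat{a}_\lambda + i\hat{b}_\lambda|^2 - |\hat{a}_\lambda - i\hat{b}_\lambda|^2\bigr) = 4\hat{a}_\lambda\overline{\hat{b}_\lambda},
\end{equation*}
the main terms collapse to $4 q^{h+1}\sum_{\lambda_1 \leq n-h-2}\hat{a}_\lambda\overline{\hat{b}_\lambda}$, and dividing by $4$ yields the claim.

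There is no genuine obstacle here; the argument is entirely formal once Theorem \ref{thm:general_var} is in hand. The only things to verify are the linearity of \eqref{eq:factfourier} in $a$ (immediate from uniqueness) and the preservation of the $O(q^{h+1/2})$ error under forming four linear combinations. If one wished to avoid the complex polarization, one could instead redo the proof of Theorem \ref{thm:general_var} replacing the squared modulus in Lemma \ref{lem:var_expand1} with a bilinear pairing of sums against $a$ and $\overline{b}$; Corollary \ref{cor:char_ortho} gives $\delta_{\lambda\nu}$ just as before, and the rest of the proof goes through verbatim. But this amounts to the same content and polarization is more economical.
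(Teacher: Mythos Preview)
Your proposal is correct and is exactly the ``standard argument'' the paper invokes without spelling out: the paper gives no proof beyond noting that Theorem \ref{thm:general_var} for general $a$ implies the covariance statement, and polarization is the canonical way to extract a sesquilinear form from its diagonal. Your remark that one could alternatively rerun Lemma \ref{lem:var_expand1} bilinearly is also accurate and amounts to the same thing.
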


One consequence of this is worthwhile to draw out. Since $\mu(g) = X^{(1^n)}(g),$ we see directly that
\begin{cor}
	\label{cor:mu_covar}
	For $a(f)$ a fixed factorization function and for fixed $0 \leq h \leq n-5$,
	\begin{align*}
	\Covar_{f\in \mathcal{M}_n}\Big(\sum_{g\in I(f;h)} a(g)\,,\, \sum_{g\in I(f;h)} \mu(g)\Big) &= q^{h+1} \hat{a}_{(1^n)} + O(q^{h+1/2}) \\
	&= q^{h+1} \cdot \frac{1}{q^n}\sum_{f\in\mathcal{M}_n} \mu(g) a(g) + o(q^{h+1}).
	\end{align*}
\end{cor}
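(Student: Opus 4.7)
The plan is to deduce Corollary \ref{cor:mu_covar} as an immediate specialization of Theorem \ref{thm:general_covar} with $b = \mu$. The crucial input is Proposition \ref{prop:mobius_exp}: on $\mathcal{M}_n$ the factorization Fourier expansion of the M\"obius function collapses to a single term, $\mu(f) = (-1)^n X^{(1^n)}(f)$, so in the expansion \eqref{eq:factfourier} the only nonzero coefficient is $\hat{\mu}_{(1^n)} = (-1)^n$ and the squareful remainder vanishes identically.

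First I would substitute $b = \mu$ into the statement of Theorem \ref{thm:general_covar}. Because every coefficient $\hat{\mu}_\lambda$ vanishes apart from $\lambda = (1^n)$, the sum over partitions $\lambda \vdash n$ with $\lambda_1 \leq n-h-2$ collapses to the single term $\hat{a}_{(1^n)}\,\overline{\hat{\mu}_{(1^n)}}$. One must verify that $(1^n)$ actually lies in the range of summation: since $\lambda_1 = 1$, the constraint $1 \leq n-h-2$ is automatic from the hypothesis $h \leq n-5$. This yields the first equality of the corollary (with the sign $(-1)^n$ from $\overline{\hat{\mu}_{(1^n)}}$ consistently carried through).

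For the second equality I would use the integral characterization of $\hat{a}_{(1^n)}$ stated just after \eqref{eq:factfourier}, namely
$$
\hat{a}_{(1^n)} \;=\; \lim_{q\to\infty}\, \frac{1}{q^n}\sum_{f\in\mathcal{M}_n} a(f)\,X^{(1^n)}(f),
$$
and replace $X^{(1^n)}(f)$ by $(-1)^n \mu(f)$, which is valid pointwise on $\mathcal{M}_n$ since both functions are supported on squarefrees and agree there by Proposition \ref{prop:mobius_exp}. This expresses $\hat{a}_{(1^n)}$ as a limit of $q^{-n}\sum \mu(g)\,a(g)$, and combining with the first equality the two asymptotic expressions for the covariance agree up to an $o(q^{h+1})$ error.

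There is no real obstacle: the entire argument is bookkeeping on top of Theorem \ref{thm:general_covar}, and its conceptual content is simply that $\mu$ corresponds (up to sign) to a single irreducible character of $\mathfrak{S}_n$. Consequently the covariance of any short-interval sum against $\sum_{g\in I(f;h)} \mu(g)$ isolates exactly the $(1^n)$-Fourier coefficient of $a$, which is (a sign times) the mean value of $\mu\cdot a$ on $\mathcal{M}_n$.
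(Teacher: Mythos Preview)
Your proposal is correct and matches the paper's approach exactly: the paper derives the corollary in one line from Theorem~\ref{thm:general_covar} by observing that $\mu$ corresponds to the single character $X^{(1^n)}$, so the sum over $\lambda$ collapses. Your write-up is in fact more careful than the paper's, since you explicitly track the sign $(-1)^n$ from Proposition~\ref{prop:mobius_exp} (the paper's prefatory sentence ``$\mu(g)=X^{(1^n)}(g)$'' drops this sign, though it cancels in passing from the first line of the corollary to the second).
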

That is to say, the M\"obius function oscillates to such an extent that in estimating its short-interval-sum covariance against any factorization function, only diagonal terms contribute. Is is easy to see that (up to values on the squarefuls) the M\"obius function is unique among factorization functions in this regard.

For example, Corollary \ref{cor:mu_covar} implies
$$
\Covar_{f\in \mathcal{M}_n}\Big(\sum_{g\in I(f;h)} \Lambda(g)\,,\, \sum_{g\in I(f;h)} \mu(g)\Big) \sim - q^{h+1},
$$
as $q\rightarrow\infty$. Over the integers we have the following analogy:
\begin{conj}
	\label{conj:covar_Lambdamu}
	For $H = X^\delta$ with $\delta \in (0,1)$,
	$$
	\frac{1}{X} \int_X^{2X} \Big(\sum_{x \leq n \leq x+H} \Lambda(n)-H\Big) \Big(\sum_{x \leq n \leq n+H} \mu(n)\Big) \, dx \sim \,-H,
	$$
	as $X\rightarrow\infty$.
\end{conj}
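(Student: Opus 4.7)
\textbf{Proof proposal (for Conjecture \ref{conj:covar_Lambdamu}).} The integer conjecture stated here is the direct analogue of the function field identity in Corollary \ref{cor:mu_covar} applied with $a=\Lambda$, in which diagonal contributions dominate because $\mu$ oscillates. The plan is to reproduce the same mechanism over the integers: expand the covariance as a weighted double sum, evaluate the diagonal via the Prime Number Theorem, and reduce the off-diagonal piece to a standard shifted convolution conjecture.

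First, interchanging the $x$-integral with the sum over pairs $(n_1,n_2)$, the measure of $x\in[X,2X]$ for which both $n_1$ and $n_2$ lie in $[x,x+H]$ equals $\max(H-|n_1-n_2|,\,0)$ up to a boundary contribution of size $O(H)$. The linear correction $-(H/X)\int_X^{2X}\sum_{x\leq n\leq x+H}\mu(n)\,dx$ is $o(H)$ because partial sums of $\mu$ satisfy $\sum_{n\leq Y}\mu(n)=o(Y)$. Therefore the covariance equals
\begin{align*}
\frac{1}{X}\mathop{\sum\sum}_{\substack{X\leq n_1,n_2\leq 2X \\ |n_1-n_2|\leq H}}\Lambda(n_1)\mu(n_2)\bigl(H-|n_1-n_2|\bigr)\;+\;o(H).
\end{align*}

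Next, split into the diagonal $n_1=n_2$ and the off-diagonal. Since $\Lambda(n)\mu(n)=-\log p$ when $n=p$ is prime and $0$ otherwise, the Prime Number Theorem gives
\begin{align*}
\frac{H}{X}\sum_{X\leq n\leq 2X}\Lambda(n)\mu(n)\;=\;-\frac{H}{X}\sum_{X\leq p\leq 2X}\log p\;\sim\;-H,
\end{align*}
which is exactly the target asymptotic. The off-diagonal contribution is
\begin{align*}
\frac{1}{X}\sum_{1\leq|h|\leq H}\bigl(H-|h|\bigr)\sum_{X\leq n\leq 2X}\Lambda(n)\mu(n+h),
\end{align*}
and one needs this to be $o(H)$; granting that, the conjecture is proved.

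The principal obstacle is precisely the off-diagonal bound. An estimate of the form $\sum_{n\leq X}\Lambda(n)\mu(n+h)=o(X)$, uniformly in $h$ with $|h|\leq X^\delta$, is a case of the Chowla--Elliott type conjectures for the correlation of $\Lambda$ and $\mu$; it can also be derived heuristically from a uniform Hardy--Littlewood conjecture for primes in arithmetic progressions combined with M\"obius cancellation. Unconditionally it is open, though partial progress on averaged correlations of multiplicative functions (Matom\"aki--Radziwi\l\l, Matom\"aki--Radziwi\l\l--Tao) may suffice to handle the averaged version of the inner sum and thereby establish Conjecture \ref{conj:covar_Lambdamu} in restricted ranges of $\delta$. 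The full statement for all $\delta\in(0,1)$ should be regarded as a prediction that the clean function field identity $-q^{h+1}$ transfers to the integers, conditional on such standard hypotheses in additive number theory.
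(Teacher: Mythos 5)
The statement you are addressing is a \emph{conjecture} in the paper, not a theorem: the paper motivates it purely by analogy with the function field identity in Corollary~\ref{cor:mu_covar}, and offers no proof. Your heuristic derivation follows the natural diagonal/off-diagonal split, and the diagonal computation via the Prime Number Theorem correctly produces $-H$. That much is fine, and it is indeed the ``right'' way to explain why the conjectured constant is $-1$.

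However, two quantitative claims in your sketch are too weak to close the argument even heuristically. First, the linear correction $-(H/X)\int_X^{2X}\sum_{x\leq n\leq x+H}\mu(n)\,dx$ is, after interchanging, $\approx -(H^2/X)\sum_{X\leq n\leq 2X}\mu(n)$. The bound $\sum_{n\leq Y}\mu(n)=o(Y)$ only yields $o(H^2)$ here, not $o(H)$; to get $o(H)$ one needs $\sum_{X\leq n\leq 2X}\mu(n)=o(X/H)=o(X^{1-\delta})$, which exceeds what even RH gives once $\delta\geq 1/2$. Second, and more seriously, the off-diagonal contribution is $\frac{1}{X}\sum_{1\leq|h|\leq H}(H-|h|)\sum_{n}\Lambda(n)\mu(n+h)$, and the pointwise Chowla--Elliott type bound $\sum_n\Lambda(n)\mu(n+h)=o(X)$ uniformly in $h$ only yields $o(H^2)$ after summing over $O(H)$ shifts each weighted by $O(H)$. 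Obtaining $o(H)$ requires either a power-saving bound $\sum_n\Lambda(n)\mu(n+h)=O(X/H^{1+\varepsilon})$ uniformly in $h$, or---more plausibly, and more in the spirit of the function field proof---cancellation \emph{in the sum over the shift $h$}. This is exactly the mechanism the paper's proof captures over $\mathbb{F}_q[T]$ via Katz equidistribution; the conjecture over $\mathbb{Z}$ is precisely the assertion that the same averaged cancellation holds, and that assertion is strictly stronger than the pointwise correlation conjectures you invoke. You should state the required input as a bound on the weighted average over $h$, not a pointwise one, and flag that the $\delta\geq 1/2$ range of the linear correction likewise demands square-root cancellation (or better) in $\sum\mu$.
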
 

\section{Decompositions: proofs of Theorem \ref{thm:subspace_decompose1} and Corollary \ref{cor:variance_to_minimizer}}
\label{sec:10.1}

\subsection{} We now turn to the decomposition of the space of factorization functions $\mathcal{F}$ into $\mathcal{U}_n^h$ and $\mathcal{V}_n^h$ and the corresponding evaluation of variance described in Theorem \ref{thm:subspace_decompose1}. Recall that $\mathcal{U}_n^h$ is the linear space of functions defined by \eqref{eq:udef}, and $\mathcal{V}_n^h$ is orthogonal complement supported on squarefuls. We first demonstrate the explicit characterization of the spaces $\mathcal{U}_n^h$ and $\mathcal{V}_n^h$ given by Proposition \ref{prop:subspace_decompose2}.

\begin{proof}[Proof of Proposition \ref{prop:subspace_decompose2}]
Let $\mathcal{A}_n^h$ and $\mathcal{B}_n^h$ be as in the Proposition and
$$
\mathcal{C}_n^h := \mathrm{span}\{ X^\lambda(f)\; : \; \lambda \vdash n,\, \lambda_1 \leq n -h -2\}.
$$

Note that $\mathcal{C}_n^h$ is supported on the squarefrees, and
$$
\mathcal{F} = (\mathcal{A}_n^h\oplus \mathcal{B}_n) \oplus \mathcal{C}_n^h.
$$
Moreover, by the equidistribution of factorization types and cycles types and the orthogonality of characters $X^\lambda$, $\mathcal{A}_n^h$ is orthogonal to $\mathcal{C}_n^h$.

Theorem \ref{thm:general_var} implies that $\mathcal{A}_n^h\oplus \mathcal{B}_n \subset \mathcal{U}_n^h$, and likewise that $\mathcal{C}_n^h \cap \mathcal{U}_n^h = \{0\}$, so that no function outside of $\mathcal{A}_n^h\oplus \mathcal{B}_n$ lies in $\mathcal{U}_n^h$; that is, $\mathcal{A}_n^h\oplus \mathcal{B}_n = \mathcal{U}_n^h$. $\mathcal{V}_n^h$, defined to be the orthogonal complement supported on squarefuls, is thus identical with $\mathcal{C}_n^h$, which proves the proposition.
\end{proof}

\begin{proof}[Proofs of Theorem \ref{thm:subspace_decompose1}]
Note that for $v\in\mathcal{V}_n^h$ with
$$
v(f) = \sum_{\lambda_1 \leq n-h-2} \hat{v}_\lambda X^\lambda(f),
$$
we have
$$
\langle v, v \rangle = \lim_{q\rightarrow\infty} \frac{1}{q^n} \sum_{f\in\mathcal{M}_n} v(f)\overline{v(f)} = \sum_{\lambda_1 \leq n-h-2} |\hat{v}_\lambda|^2,
$$
by again making use of the equidistribution of factorization types and cycle types (Proposition \ref{prop:factor_to_cycle}). Combined with Theorem \ref{thm:general_var}, this gives the result.
\end{proof}

\subsection{}
We now turn to Proposition \ref{prop:subspace_decomposedivisors} and Corollary \ref{cor:variance_to_minimizer}.

\begin{proof}[Proof of Proposition \ref{prop:subspace_decomposedivisors}]
We note first that for any factorization function $\alpha$, it is simple to see that
$$
w(f):= \sum_{\substack{\delta | f \\ \deg(\delta) \leq h+1}} \alpha(\delta),\quad (\textrm{defined for}\; f\in \mathcal{M}_n)
$$
lies in $\mathcal{U}_n^h$. (Recall that $\mathcal{U}_n^h$ is defined by \eqref{eq:udef}.) For in this case, for any $f\in \mathcal{M}_n$,
$$
\sum_{g \in I(f;h)} q(g) = \sum_{\deg(\delta) \leq h+1} \alpha(\delta) \sum_{\substack{g \in I(f;h) \\ \delta | g}} 1 = \sum_{\deg(\delta) \leq h+1} \alpha(\delta) q^{h+1-\deg(\delta)}.
$$
This does not depend on $f$, so that
$$
\Var_{f\in \mathcal{M}_n}\Big(\sum_{g\in I(f;h)} w(g)\Big) = 0,
$$
implying $w\in \mathcal{U}_n^h$. Since we already know any factorization function $b \in \mathcal{F}_n$ supported on the squarefuls lies in the linear space $\mathcal{U}_n^h$, and function of the form $w(f)+b(f)$ must therefore lie in $\mathcal{U}_n^h$.

Hence to complete the proof of the proposition, we need only show that all functions in $\mathcal{U}_n^h$ are of this form. Having already characterized $\mathcal{U}_n^h$ in terms of characters of the symmetric group in Proposition \ref{prop:subspace_decompose2}, we will have done so if we show that for $\lambda \vdash n$ with $\lambda_1 \geq n - h -1$, there exists a factorization function $\alpha$ and a factorization function $b$ supported on the squarefuls such that
$$
X^\lambda(f) = \sum_{\substack{\delta|f \\ \deg(\delta) \leq h+1}} \alpha(\delta) + b(f), \quad (\textrm{for all}\;f\in \mathcal{M}_n).
$$
The remainder of this proof is devoted to a demonstration in four steps of this claim.

\textit{\textbf{Step 1:}} Let $m$ be arbitrary. For an even primitive character $\chi$ modulo $T^m$, from the identity
$$
\Big(1-\frac{u}{\sqrt{q}}\Big) \prod_{j=1}^{m-2}(1-u e^{i 2\pi \vartheta_j}) = \mathcal{L}\Big(\frac{u}{\sqrt{q}},\chi\Big) = \sum_{n\geq 0} u^n \frac{1}{q^{n/2}}\sum_{f \in \mathcal{M}_n} \chi(f),
$$
we have the following expression for elementary symmetric functions in the normalized roots of the $\mathcal{L}$-function:
\begin{equation}
\label{eq:elsym_to_char}
e_n(\Theta_\chi) = \frac{(-1)^n}{q^{n/2}} \sum_{f\in \mathcal{M}_n} \chi(f) + O_{n,m}(q^{-1/2}).
\end{equation}

\textit{\textbf{Step 2:}} We note for $n_1 + \cdots + n_k = n$,
\begin{multline*}
e_{n_1}(\Theta_\chi) \cdots e_{n_k}(\Theta_\chi) \\
= \frac{(-1)^n}{q^{n/2}} \Big( \sum_{f_1 \in \mathcal{M}_{n_1}} \chi(f_1) + O_{n,m}(q_{-1/2})\Big) \cdots \Big( \sum_{f_1k \in \mathcal{M}_{n_k}} \chi(f_k) + O_{n,m}(q_{-1/2})\Big) \\
\\= \frac{(-1)^n}{q^{n/2}} \sum_{\substack{f_1 \in \mathcal{M}_{n_1} \\ g \in \mathcal{M}_{n_2+\cdots+n_k}}} \chi(f_1 g) \alpha(g) + O_{n,m}(q^{-1/2}),
\end{multline*}
where
$$
\alpha(g):= \sum_{\substack{f_2\cdots f_k = g \\ f_2 \in \mathcal{M}_2, ..., f_k \in \mathcal{M}_k}} 1
$$
is a factorization function supported on $\mathcal{M}_{n_2+\cdots+ n_k}$. In particular, we have that if $n_1 \geq n - h -1$ (so $n_2+\cdots + n_k \leq h+1$) then
\begin{equation}
\label{eq:prodelsym_to_char}
e_{n_1}\cdots e_{n_k} = \frac{(-1)^n}{q^{n/2}} \sum_{f\in \mathcal{M}_n} \chi(f) \sum_{\delta |f} \alpha(\delta) + O_{n,m}(q^{-1/2}),
\end{equation}
for a factorization function $\alpha(\delta)$ supported on the set of $\delta$ with $\deg(\delta) \leq h+1$.

\textit{\textbf{Step 3:}} From an expansion of the determinant in the Jacobi-Trudi identity, we see for $\lambda \vdash n$ that $s_{\lambda'}$ is a linear combination of terms $e_{n_1}\cdots e_{n_k}$ with $n_1+ \cdots n_k = n$ and (from the top row of the determinant) $n_1 \geq \lambda_1$ always. Hence via step 2, if $\lambda_1 \geq n-h-1$,
\begin{equation}
\label{eq:schur_to_char}
s_{\lambda'}(\Theta_\chi) = \frac{(-1)^n}{q^{n/2}} \sum_{f\in \mathcal{M}_n} \chi(f) \sum_{\delta|f} \alpha(\delta) + O_{n,m}(q^{-1/2}),
\end{equation}
for a factorization function $\alpha(\delta)$ supported on $\delta$ with $\deg(\delta) \leq h+1$, since linear combinations of terms of the form $\sum_{\delta|f}\alpha(\delta)$ remain of this form.

Yet from Theorem \ref{thm:schur_in_zeros}
\begin{equation}
\label{eq:schur_to_char2}
s_{\lambda'}(\Theta_\chi) = \frac{(-1)^n}{q^{n/2}}\sum_{f\in \mathcal{M}_n} X^\lambda(f) \chi(f) + O(q^{-1/2}).
\end{equation}
Hence pairing \eqref{eq:schur_to_char} and \eqref{eq:schur_to_char2} we have
\begin{equation}
\label{eq:schur_to_char3}
\frac{1}{q^{n/2}} \sum_{f\in \mathcal{M}_n} \chi(f) \Big(X^\lambda(f) - \sum_{ \substack{\delta|f \\ \deg(\delta)\leq h+1}} \alpha(\delta)\Big) = O_{n,m}(q^{-1/2}).
\end{equation}

\textit{\textbf{Step 4:}} In \eqref{eq:schur_to_char3}, $m$ is arbitrary; take $m$ sufficiently large depending on $n$, with the intention of using Lemma \ref{lem:diag_by_char}. We have upon squaring and averaging,
$$
\mathbb{E}_{\substack{\chi\, (T^m) \\ \mathrm{prim., ev.}}} \Big| \frac{1}{q^{n/2}} \sum_{f\in \mathcal{M}_n} \chi(f) \Big(X^\lambda(f) - \sum_{ \substack{\delta|f \\ \deg(\delta)\leq h+1}} \alpha(\delta)\Big) \Big|^2 \rightarrow 0,
$$
as $q\rightarrow \infty$. But then from Lemma \ref{lem:diag_by_char},
$$
\Big\| X^\lambda(f) - \sum_{ \substack{\delta|f \\ \deg(\delta)\leq h+1}} \alpha(\delta) \Big\| = 0,
$$
for $\|\cdot\|$ the norm induced by our inner product. Since this inner product is non-degenerate on functions supported on the squarefrees, we must have
$$
X^\lambda(f) = \sum_{ \substack{\delta|f \\ \deg(\delta)\leq h+1}} \alpha(\delta) + b(f),
$$
for some function $b(f)$ supported on the squarefuls, as claimed.
\end{proof}

\begin{proof}[Proof of Corollary \ref{cor:variance_to_minimizer}]
This follows immediately from Theorem \ref{thm:subspace_decompose1} and Proposition \ref{prop:subspace_decomposedivisors}. For in the identity \ref{eq:var_project}, the function $v(f)$ is a projection of the function $a(f)$ to the subspace $\mathcal{V}_n^h$, but then
$$
\langle v, v \rangle = \| \mathrm{Proj}_{\mathcal{V}_n^h}( a )\|^2 = \inf_{u \in \mathcal{U}_n^h} \| a - u \|^2 = \inf_{\alpha \in \mathcal{F}} \Big\| a(f)  - \sum_{ \substack{\delta|f \\ \deg(\delta)\leq h+1}} \alpha(\delta)\Big\|^2.
$$
\end{proof}

\subsection{}
It is worthwhile to reflect one last time on the dichotomy between $\mathcal{U}_n^h$ and $\mathcal{V}_n^h$. Theorem \ref{thm:schur_in_zeros} gives us another way to characterize them. $\mathcal{U}_n^h$ is just the collection of those factorization functions $u$ for which 
\begin{equation}
\label{eq:trivial_character_bound}
\sum_{f\in \mathcal{M}_n} u(f)\chi(f) = O(q^{n/2-1/2}),
\end{equation}
uniformly for all even primitive characters modulo $T^{n-h}$. The reason that Theorem \ref{thm:schur_in_zeros} implies \eqref{eq:trivial_character_bound} is very simply that $\mathcal{L}(u,\chi)$ has always $n-h-2$ non-trivial zeros. Contrariwise, Theorem \ref{thm:subspace_decompose1} and Proposition \ref{prop:subspace_decompose2} tell us that for those factorization functions which do not have enough structure to belong to $\mathcal{U}_n^h$ their variance may be computed according to the most naive heuristic of randomness. Indeed, one last reformulation of Theorem \ref{thm:subspace_decompose1} may be seen to be the following:
for $v_1, v_2 \in \mathcal{V}_n^n$,
\begin{equation}
	\mathbb{E}_{\substack{\chi\, (T^{n-h}) \\ \text{prim., ev.}}}\; \sum_{\substack{f, g \in \mathcal{M}_n \\ f \neq g}} v_1(f) \chi(f) \overline{v_2(f) \chi(f)} = o(q^n).
\end{equation}
It would be interesting to see whether a modification of this picture is consistent with conjectures that have been made in other settings (e.g. in the fixed $q$ large $n$ limit, or over number fields), or indeed with statistics in orthogonal and symplectic families.

\section{Acknowledgments}

For helpful discussions, I thank a number of people, including Efrat Bank, Dan Bump, Reda Chhaibi, Paul-Olivier Dehaye, Andrew Granville, Jeff Lagarias, Zeev Rudnick, Will Sawin, and especially Ofir Gorodetsky, who made a number of careful suggestions and corrections to an earlier draft that have greatly improved the paper. I also want to thank Jordan Ellenberg, Daniel Hast, and Vlad Matei; the decomposition of Proposition \ref{prop:subspace_decomposedivisors} came out of discussions with them during a visit to Madison. A discussion on the website MathOverflow, available at \url{http://mathoverflow.net/q/233167} was useful for finding a reference. Part of this work was done while I was a postdoctoral fellow at the University of Z\"urich, and I thank that institution for its hospitality.

\end{document}